\newtheorem{theorem}{Theorem}[section]
\newtheorem{corollary}{Corollary}[section]
\newtheorem{lemma}{Lemma}[section]
\newtheorem{definition}{Definition}[section]
\newtheorem{remark}{Remark}[section]
\newtheorem{proposition}{Proposition}[section]
\newcommand{\beq}{\begin{equation}}
\newcommand{\eeq}{\end{equation}}
\newcommand{\beqn}{\begin{eqnarray}}
\newcommand{\eeqn}{\end{eqnarray}}
\begin{document}
\allowdisplaybreaks

\title{Right fractional Sobolev space via Riemann$-$Liouville derivatives on time scales and an application to fractional boundary value problem on time scales\thanks{This work is supported by the National Natural Science Foundation of China under Grant Nos. 11861072 and 11561071.}}
\author{ Xing Hu and Yongkun Li\thanks{The corresponding author. Email: yklie@ynu.edu.cn.}\\
 Department of Mathematics, Yunnan University\\
Kunming, Yunnan 650091\\
 People's Republic of China}
\date{}
\maketitle{}

\begin{abstract}
Using the concept of fractional derivatives of Riemann$-$Liouville on time scales, we first introduce right fractional Sobolev spaces and characterize them.  Then, we prove the equivalence of some norms in the introduced spaces, and obtain their completeness, reflexivity, separability and some imbeddings. Finally, as an application, we propose  a recent  method to study the existence of weak solutions of fractional boundary value problems on time scales by using variational method and critical point theory, and by constructing an appropriate variational setting, we obtain two existence results of the problem.
\end{abstract}
{\bf Key words:} Riemann-Liouville derivatives on time scales; Fractional Sobolev's spaces on time scales; Fractional boundary value problems on time scales \\
{\bf MSC Classification:} 34A08, 26A33, 34B15, 34N05.

\section{Introduction}
\setcounter{equation}{0}
In the past two decades, fractional calculus and fractional (order) differential equations have aroused widespread interest and attention in the field of differential equations, as well as in applied mathematics and science. In addition to true mathematical interest and curiosity, this trend is also driven by interesting scientific and engineering applications that have produced fractional differential equation models to better describe (time) memory effects and (space) non-local phenomenon (\cite{13,14,15,16,17}). It is the rise of these applications that revitalize the field of fractional calculus and fractional differential equations and call for further research in this field.

As we all know, discrete-time systems are as important as continuous time systems. Therefore, it is of equal importance to study the solvability of boundary value problems of fractional differential equations and difference equations. Fortunately, the time scale theory proposed by Stefan Hilger (\cite{t1}) can unify the study of differential equations and difference equations. So far, the research of time scale theory has attracted extensive attention all over the world. It can be applied to engineering, physics, economics, population dynamics and other fields (\cite{t4,t5,t7,t8,a1,a2,a3,a4}).

In order to study the existence and multiplicity of solutions of differential equations and difference equations in a unified framework, Refs. (\cite{7,2,2a}) have studied some Sobolev space theories on time scales.
However, so far, there is no right fractional Sobolev space via Riemann$-$Liouville derivatives on time scales. In order to fill this gap, the main purpose of this paper is to establish right fractional Sobolev spaces on time scales through Riemann$-$Liouville fractional derivative, and study some of their basic properties. Then, as an application of our new theory, we study the solvability of a class of fractional boundary value problems on time scales.

\section{Preliminaries}
\setcounter{equation}{0}
In this section, we will recall some basic known notations, definitions, and  results, which are needed in the sequel.

Throughout this paper, we denote by $\mathbb{T}$ a time scale. We will use the following notations:
$J_{\mathbb{R}}^0=[a,b)$,  $J_{\mathbb{R}}=[a,b]$,
 $J^0=J_{\mathbb{R}}^0\cap{\mathbb{T}}$, $J=J_{\mathbb{R}}\cap{\mathbb{T}}$, $J^k=[a,\rho(b)]\cap\mathbb{T}$.

\begin{definition}(\cite{t2})\label{1}
For $t\in\mathbb{T}$ we define the forward jump operator $\sigma:\mathbb{T}\rightarrow\mathbb{T}$ by
$
\sigma(t):=\inf\{s\in\mathbb{T}:s>t\},
$
while the backward jump operator $\rho:\mathbb{T}\rightarrow\mathbb{T}$ is defined by
$
\rho(t):=\sup\{s\in\mathbb{T}:s<t\}.
$
\end{definition}

\begin{remark}(\cite{t2})
\begin{itemize}
  \item [$(1)$]
  In Definition \ref{1}, we put $\inf\varnothing=\sup\mathbb{T}$ (i.e., $\sigma(t)=t$ if $\mathbb{T}$ has a maximum $t$) and $\sup\varnothing=\inf\mathbb{T}$ (i.e., $\rho(t)=t$ if $\mathbb{T}$ has a minimum $t$), where $\emptyset$ denotes the empty set.
  \item [$(2)$]
  If $\sigma(t)>t$, we say that $t$ is right$-$scattered, while if $\rho(t)<t$, we say that $t$ is left$-$scattered. Points that are right$-$scattered and left$-$scattered at the same time are called isolated.
  \item [$(3)$]
  If $t<\sup\mathbb{T}$ and $\sigma(t)=t$, we say that $t$ is right$-$dense, while if $t>\inf\mathbb{T}$ and $\rho(t)=t$, we say that $t$ is left$-$dense. Points that are right$-$dense and left$-$dense at the same time are called dense.
  \item[$(4)$]
  The graininess function $\mu:\mathbb{T}\rightarrow[0,\infty)$ is defined by
  $
  \mu(t):=\sigma(t)-t.
  $
  \item [$(5)$]
  The derivative makes use of the set $\mathbb{T}^k$, which is derived from the time scale $\mathbb{T}$ as follows: If $\mathbb{T}$ has a left$-$scattered maximum $M$, then $\mathbb{T}^k:=\mathbb{T}\backslash\{M\}$; otherwise, $\mathbb{T}^k:=\mathbb{T}$.
\end{itemize}
\end{remark}

\begin{definition}(\cite{t3})
Assume that $f:\mathbb{T} \rightarrow \mathbb{R}$ is a function and let
$t\in \mathbb{T}^{k}$. Then we define $f^{\Delta}(t)$ to be the
number (provided it exists) with the property that given any
$\varepsilon>0$, there is a neighborhood $U$ of
t (i.e, $U=(t-\delta,t+\delta)\cap \mathbb{T}$ for some
$\delta>0$) such that
\[
\vert f(\sigma(t))-f(s)-f^{\Delta}(t)(\sigma(t)-s)\vert\leq
\varepsilon\vert\sigma(t)-s\vert
\] for all $s\in U$.
We call $f^{\Delta}(t)$ the delta (or Hilger) derivative of f at t.
Moreover, we say that $f$ is delta (or Hilger) differentiable (or in
short: differentiable) on $\mathbb{T}^{k}$ provided $f^{\Delta}(t)$
exists for all $t\in \mathbb{T}^{k}$. The function
$f^{\Delta}:\mathbb{T}^{k} \rightarrow \mathbb{R}$ is then called
the (delta) derivative of $f$ on $\mathbb{T}^{k}$.
\end{definition}

\begin{definition}(\cite{t2})
A function $f:\mathbb{T}\rightarrow\mathbb{R}$ is called rd$-$continuous provided it is continuous at right$-$dense points in $\mathbb{T}$ and its left$-$sided limits exist (finite) at left$-$dense points in $\mathbb{T}$. The set of rd$-$continuous functions $f:\mathbb{T}\rightarrow\mathbb{R}$ will be denoted by
$
C_{rd}=C_{rd}(\mathbb{T})=C_{rd}(\mathbb{T},\mathbb{R}).
$
The set of functions $f:\mathbb{T}\rightarrow\mathbb{R}$ that are differentiable and whose derivative is rd$-$continuous is denoted by
$
C_{rd}^1=C_{rd}^1(\mathbb{T})=C_{rd}^1(\mathbb{T},\mathbb{R}).
$
\end{definition}

\begin{definition}(\cite{3})
Let $J$ denote a closed bounded interval in $\mathbb{T}$. A function  $F:J\rightarrow\mathbb{R}$ is called a delta antiderivative of function $f:J^0\rightarrow\mathbb{R}$ provided $F$ is continuous on $J$, delta differentiable at $J^0$, and $F^\Delta(t)=f(t)$ for all $t\in J^0$. Then, we define the $\Delta-$integral of $f$ from $a$ to $b$ by
$
\int_a^bf(t)\Delta t:=F(b)-F(a).
$
\end{definition}

\begin{theorem}(\cite{t3})\label{ft3}
If $a,b\in\mathbb{T}$ and $f,g\in C_{rd}(\mathbb{T})$, then
\begin{eqnarray*}
\int_{J^0}f^\sigma(t)g^\Delta(t)\Delta t=(fg)(b)-(fg)(a)-\int_{J^0}f^\Delta(t)g(t)\Delta t.
\end{eqnarray*}
\end{theorem}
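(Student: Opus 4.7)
The plan is to derive the identity from the Hilger product rule combined with the fundamental theorem for the delta integral. Since the formula features $f^\sigma$ (not $f$) as the factor paired with $g^\Delta$, the natural starting point is the ``forward" version of the Leibniz rule on time scales, namely
\begin{equation*}
(fg)^\Delta(t) \;=\; f^\Delta(t)\, g(t) \;+\; f^\sigma(t)\, g^\Delta(t), \qquad t\in J^0.
\end{equation*}
I would first record (or quickly re-derive from the definition of $f^\Delta$) this identity; a short computation using $f(\sigma(t))=f(t)+\mu(t)f^\Delta(t)$ and the definition of the delta derivative of the product $fg$ yields it directly.

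Next, I would integrate this identity in the delta sense over $J^0=[a,b)\cap\mathbb{T}$. Under the stated hypotheses the three terms are rd-continuous, hence delta-integrable, so each piece is well defined. By the very definition of the delta antiderivative (the fundamental theorem of calculus on time scales),
\begin{equation*}
\int_{J^0} (fg)^\Delta(t)\, \Delta t \;=\; (fg)(b) - (fg)(a).
\end{equation*}

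Finally, rearranging — that is, moving $\int_{J^0} f^\Delta g\,\Delta t$ to the right-hand side — produces exactly the desired formula
\begin{equation*}
\int_{J^0} f^\sigma(t)\, g^\Delta(t)\, \Delta t \;=\; (fg)(b) - (fg)(a) - \int_{J^0} f^\Delta(t)\, g(t)\, \Delta t.
\end{equation*}
The only genuine subtlety is the appearance of $f^\sigma$ rather than $f$ on the left; this is dictated by the asymmetry of the time-scale Leibniz rule, and choosing the correct version of that rule (as opposed to the mirror form $(fg)^\Delta = fg^\Delta + f^\Delta g^\sigma$, which would give integration by parts with the roles of $f$ and $g$ interchanged) is the main point one has to get right. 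Once the right version of the product rule is in hand, the argument is three lines.
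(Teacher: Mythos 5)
Your proof is correct and is exactly the standard argument: the paper itself offers no proof (it simply cites Bohner--Peterson), and the proof given there is precisely your combination of the product rule $(fg)^\Delta=f^\Delta g+f^\sigma g^\Delta$ with the fundamental theorem $\int_{J^0}(fg)^\Delta(t)\,\Delta t=(fg)(b)-(fg)(a)$, followed by rearrangement. The only caveat worth noting is that the hypothesis as stated ($f,g\in C_{rd}$) is too weak for $f^\Delta,g^\Delta$ to exist; one really needs $f,g\in C^1_{rd}$, an imprecision inherited from the paper's statement rather than a flaw in your argument.
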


\begin{proposition}(\cite{10})\label{2}
Let $f$ be an increasing continuous function on  $J$. If $F$ is the extension of $f$ to the real interval $J_{\mathbb{R}}$ given by
\begin{equation*}
F(s):=
\left\{
\begin{aligned}
f(s)&,&\quad if\,\,s\in \mathbb{T},\\
f(t)&,&\quad if\,\,s\in (t,\sigma(t))\notin \mathbb{T},
\end{aligned}
\right.
\end{equation*}
then
\begin{eqnarray*}
\int_a^bf(t)\Delta t\leq\int_a^bF(t)dt.
\end{eqnarray*}
\end{proposition}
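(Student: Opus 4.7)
The plan is to bound $\int_a^b F(s)\,ds$ from below by Riemann-delta sums of $f$ and then pass to the limit; the monotonicity hypothesis on $f$ is what makes the estimate work.

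I would first check that the extension $F$ is non-decreasing on the real interval $[a,b]$. For $s_1<s_2$ in $J_{\mathbb{R}}$, a brief case analysis (both in $\mathbb{T}$; both in a common gap $(t,\sigma(t))$; in two distinct gaps; one in $\mathbb{T}$ and the other in a gap), combined with the monotonicity of $f$ on $J\cap\mathbb{T}$ and the fact that a gap $(t,\sigma(t))$ is separated from the next point of $\mathbb{T}$ to its right by $\sigma(t)$, yields $F(s_1)\leq F(s_2)$ in every case. As a monotone function, $F$ is therefore Riemann integrable on $[a,b]$.

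Next, for any partition $P: a=s_0<s_1<\cdots<s_n=b$ with every $s_i\in\mathbb{T}$, monotonicity of $F$ together with the identity $F(s_i)=f(s_i)$ gives $F(s)\geq f(s_i)$ for all $s\in[s_i,s_{i+1}]$. Integrating over each subinterval and summing produces
\[
\int_a^b F(s)\,ds \;\geq\; \sum_{i=0}^{n-1} f(s_i)(s_{i+1}-s_i),
\]
and the right-hand side is the Riemann-delta sum for $f$ associated with $P$. Because $f$ is continuous (hence rd-continuous) on $J$, one can refine $P$ appropriately (keeping the mesh small on the right-dense parts of $\mathbb{T}$ and including every right-scattered point of $[a,b)$) so that the corresponding Riemann-delta sums converge to $\int_a^b f(t)\,\Delta t$; the desired inequality then follows by passing to the limit.

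The main obstacle I expect is the limit step itself: on a general time scale with countably many gaps one cannot force the ordinary Euclidean mesh of a partition $P\subset\mathbb{T}$ to zero, so the relevant notion of fineness must be the delta-mesh that treats the right-scattered points separately. Once the standard convergence of Riemann-delta sums to the delta integral is invoked for rd-continuous integrands, together with the elementary monotonicity-based lower bound above, the proposition follows immediately.
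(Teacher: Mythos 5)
The paper offers no proof of this proposition: it is imported verbatim from \cite{10}, so there is no in-paper argument to compare against, and your proposal must be judged on its own. It is correct. The case analysis showing that $F$ is non-decreasing on $[a,b]$ is sound (since $\mathbb{T}$ is closed, every point of $[a,b]\setminus\mathbb{T}$ lies in a gap $(t,\sigma(t))$ with both endpoints in $\mathbb{T}$, and the gap structure forces $F(s_1)\leq F(s_2)$ in each of your four cases), monotonicity gives Riemann integrability of $F$ and the lower bound $\int_a^bF(s)\,ds\geq\sum_{i}f(s_i)(s_{i+1}-s_i)$ for any partition drawn from $\mathbb{T}$, and the limit step is exactly the standard convergence of left Riemann $\Delta$-sums to the $\Delta$-integral for continuous integrands via Guseinov's partition lemma (for each $\delta>0$ choose the partition so that each subinterval either has length at most $\delta$ or is a single gap $[s_i,\sigma(s_i)]$). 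You correctly identify the one genuine subtlety, namely that the Euclidean mesh cannot be driven to zero and the $\Delta$-fineness notion must be used instead. One minor imprecision: a finite partition cannot literally contain every right-scattered point when there are infinitely many; the partition lemma only requires isolating the finitely many gaps of length exceeding $\delta$, which is all your limit argument needs. As a point of comparison, for this particular step-extension the representation of the $\Delta$-integral as an ordinary integral (Cabada--Vivero) yields the stronger conclusion $\int_a^bf(t)\Delta t=\int_a^bF(t)\,dt$ without any monotonicity hypothesis, so the stated inequality is not sharp; your argument is the more elementary route and proves exactly what the paper uses.
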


Motivated by Definition 4 in \cite{a4} and Definition 2.1 in \cite{4}, we can present the right Riemann$-$Liouville fractional integral and derivative on time scales as follows:

\begin{definition}(Fractional integral on time scales)\label{3} Suppose  $h$ is an integrable function on $J$. Let $0<\alpha\leq1$. Then the left fractional integral of order $\alpha$ of $h$ is defined by
\begin{eqnarray*}
_a^{\mathbb{T}}I_t^\alpha h(t):=\int_a^t\frac{(t-\sigma(s))^{\alpha-1}}{\Gamma(\alpha)}h(s)\Delta s.
\end{eqnarray*}
The right fractional integral of order $\alpha$ of $h$ is defined by
\begin{eqnarray}\label{2.5}
_t^{\mathbb{T}}I_b^\alpha h(t):=\int_t^b\frac{(s-\sigma(t))^{\alpha-1}}{\Gamma(\alpha)}h(s)\Delta s,
\end{eqnarray}
where $\Gamma$ is the gamma function.
\end{definition}

\begin{definition}(Riemann$-$Liouville fractional derivative on time scales)\label{4}  Let  $t\in\mathbb{T}$, $0<\alpha\leq1$, and $h:\mathbb{T}\rightarrow\mathbb{R}$. The left Riemann$-$Liouville fractional derivative of order $\alpha$ of $h$ is defined by
\begin{eqnarray*}
_a^{\mathbb{T}}D_t^\alpha h(t):=\bigg({_a^{\mathbb{T}}}I_t^{1-\alpha} h(t)\bigg)^\Delta=\frac{1}{\Gamma(1-\alpha)}\bigg(\int_a^t(t-\sigma(s))^{-\alpha}h(s)\Delta s\bigg)^\Delta.
\end{eqnarray*}
The right Riemann$-$Liouville fractional derivative of order $\alpha$ of $h$ is defined by
\begin{eqnarray}\label{2.6}
_t^{\mathbb{T}}D_b^\alpha h(t):=-\bigg({_t^{\mathbb{T}}}I_b^{1-\alpha} h(t)\bigg)^\Delta=\frac{-1}{\Gamma(1-\alpha)}\bigg(\int_t^b(s-\sigma(t))^{-\alpha}h(s)\Delta s\bigg)^\Delta.
\end{eqnarray}
\end{definition}

Motivated by Definition 4 and Equation (21) in \cite{a4} and Theorem 2.1 in \cite{16}, we can present the right Caputo fractional derivative on time scales as follows:

\begin{definition}(Caputo fractional derivative on time scales)\label{cd}  Let  $t\in\mathbb{T}$, $0<\alpha\leq1$  and $h:\mathbb{T}\rightarrow\mathbb{R}$. The left Caputo fractional derivative of order $\alpha$ of $h$ is defined by
\begin{eqnarray*}
{_a^{\mathbb{T}\,C}}D_t^\alpha h(t):={_a^{\mathbb{T}}}I_t^{1-\alpha} h^\Delta(t)=\frac{1}{\Gamma(1-\alpha)}\int_a^t(t-\sigma(s))^{-\alpha}h^\Delta(s)\Delta s.
\end{eqnarray*}
The right Caputo fractional derivative of order $\alpha$ of $h$ is defined by
\begin{eqnarray*}
{_t^{\mathbb{T}\,C}}D_b^\alpha h(t):=-{_t^{\mathbb{T}}}I_b^{1-\alpha} h^\Delta(t)=\frac{-1}{\Gamma(1-\alpha)}\int_t^b(s-\sigma(t))^{-\alpha}h^\Delta(s)\Delta s.
\end{eqnarray*}
\end{definition}

\begin{definition}(\cite{3'})\label{d2'}
For $f:\mathbb{T}\rightarrow\mathbb{R}$, the time scale or generalized Laplace transform of $f$, denoted by $\mathcal{L}_{\mathbb{T}}\{f\}$ or $F(z)$, is given by
\begin{eqnarray*}
\mathcal{L}_{\mathbb{T}}\{f\}(z)=F(z):=\int_0^\infty f(t)g^\sigma(t)\Delta t,
\end{eqnarray*}
where $g(t)=e_{\ominus z}(t,0)$.
\end{definition}

\begin{theorem}(\cite{3'}) (Inversion formula of the Laplace transform)\label{t1'}
Suppose that $F(z)$ is analytic in the region $Re_\mu(z)>Re_\mu(c)$ and $F(z)\rightarrow0$ uniformly as $\vert z\vert\rightarrow\infty$ in this region. Suppose $F(z)$ has finitely many regressive poles of finite order $\{z_1,z_2,\ldots,z_n\}$ and $\widetilde{F}_{\mathbb{R}}(z)$ is the transform of the function $\widetilde{f}(t)$ on $\mathbb{R}$ that corresponds to the transform $F(z)=F_{\mathbb{T}}(z)$ of $f(t)$ on $\mathbb{T}$. If
\begin{eqnarray*}
\int_{c-i\infty}^{c+i\infty}\vert \widetilde{F}_{\mathbb{R}}(z)\vert\vert dz\vert<\infty,
\end{eqnarray*}
then
\begin{eqnarray*}
f(t)=\sum_{i=1}^nRes_{z=z_i}e_z(t,0)F(z)
\end{eqnarray*}
has transform $F(z)$ for all $z$ with $Re(z)>c$.
\end{theorem}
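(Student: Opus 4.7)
The plan is to reduce the inversion to the classical Bromwich formula on $\mathbb{R}$, applied to the companion transform $\widetilde{F}_{\mathbb{R}}$, and then transfer the resulting residue expansion to $\mathbb{T}$ via the correspondence $e^{zt}\leftrightarrow e_{z}(t,0)$. In other words, I would prove the theorem on the real line first, and then argue that replacing the ordinary exponential by the time scale exponential at the level of residues produces exactly the time scale inverse.

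In detail, I would begin by applying the standard inverse Laplace transform on $\mathbb{R}$: the hypothesis $\int_{c-i\infty}^{c+i\infty}|\widetilde{F}_{\mathbb{R}}(z)|\,|dz|<\infty$ is precisely the absolute convergence condition for the Bromwich integral, so
\[
\widetilde{f}(t)=\frac{1}{2\pi i}\int_{c-i\infty}^{c+i\infty}e^{zt}\widetilde{F}_{\mathbb{R}}(z)\,dz
\]
for $\Re z>c$. Next, close the vertical contour by a large semicircle to the left: because $F$ is analytic in $\Re_\mu(z)>\Re_\mu(c)$ with $F(z)\to 0$ uniformly at infinity, and because $\widetilde{F}_{\mathbb{R}}$ inherits the same analyticity and decay along the portion of the contour that matters, Jordan's lemma kills the semicircular contribution. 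The residue theorem then gives
\[
\widetilde{f}(t)=\sum_{i=1}^{n}\mathrm{Res}_{z=z_{i}}\,e^{zt}\widetilde{F}_{\mathbb{R}}(z).
\]

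The final step is to promote this identity to $\mathbb{T}$. By the correspondence hypothesis, $\widetilde{F}_{\mathbb{R}}(z)$ and $F_{\mathbb{T}}(z)=F(z)$ are the transforms of functions on $\mathbb{R}$ and $\mathbb{T}$ that agree as analytic functions of $z$; thus the candidate on the time scale is obtained from the above by substituting $e^{zt}\rightsquigarrow e_{z}(t,0)$, i.e.\ $f(t):=\sum_{i=1}^{n}\mathrm{Res}_{z=z_{i}}e_{z}(t,0)F(z)$. To verify $\mathcal{L}_{\mathbb{T}}\{f\}=F$, apply $\mathcal{L}_{\mathbb{T}}$ term by term; using $\mathcal{L}_{\mathbb{T}}\{e_{z_{i}}(\cdot,0)\}(z)=\frac{1}{z-z_{i}}$ for simple poles (and, for a pole of order $k$, the transforms of the $z$-derivatives $\partial_{z}^{j}e_{z_{i}}(t,0)$, whose images are $(z-z_{i})^{-(j+1)}$ up to factorials), one recovers the principal part of $F$ at each $z_{i}$. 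Since $F$ is analytic outside $\{z_{1},\dots,z_{n}\}$ and vanishes at infinity, the sum of these principal parts is $F(z)$ itself, which closes the argument.

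The main obstacle is making the ``$\widetilde{F}_{\mathbb{R}}\leftrightarrow F_{\mathbb{T}}$'' correspondence precise at the level of residues: one must check that $e_{z}(t,0)$ is entire in $z$ for each fixed $t\in\mathbb{T}$ so that taking residues in $z$ against $e_{z}(t,0)F(z)$ is legitimate, and that the Leibniz expansion needed for higher-order poles (where residues mix $\partial_{z}^{j}e_{z}(t,0)$ with $\partial_{z}^{k-1-j}[(z-z_{i})^{k}F(z)]$) aligns with the partial-fraction decomposition of $F$. Once this algebraic bookkeeping is done, linearity of $\mathcal{L}_{\mathbb{T}}$ plus the elementary transform $\mathcal{L}_{\mathbb{T}}\{e_{\lambda}(\cdot,0)\}(z)=(z-\lambda)^{-1}$ are all that remain.
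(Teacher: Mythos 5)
This statement is quoted verbatim from the reference \cite{3'} (Davis, Gravagne, Jackson, Marks and Ramos) as a preliminary; the paper supplies no proof of it, so there is nothing in-paper to compare your argument against. Judged on its own terms, your proposal is essentially the argument given in the cited source: the substantive step is your final verification that $f(t)=\sum_{i}\mathrm{Res}_{z=z_i}e_z(t,0)F(z)$ transforms back to $F$, carried out by applying $\mathcal{L}_{\mathbb{T}}$ term by term, using $\mathcal{L}_{\mathbb{T}}\{e_{z_i}(\cdot,0)\}(z)=(z-z_i)^{-1}$ and its $z_i$-derivatives for higher-order poles, and observing that the resulting principal parts sum to $F$ because $F$ is meromorphic with finitely many poles and vanishes at infinity. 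Your first two steps (Bromwich integral and Jordan's lemma for $\widetilde{f}$ on $\mathbb{R}$) are a harmless detour: they motivate the candidate $f$ but are not needed for the stated conclusion, since the conclusion only asserts that the residue sum has transform $F$.

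Two caveats you should make explicit if you were to write this out in full. First, differentiation under the $\Delta$-integral sign is what justifies $\mathcal{L}_{\mathbb{T}}\{\partial_{z_i}^{j}e_{z_i}(\cdot,0)\}(z)=j!\,(z-z_i)^{-(j+1)}$, and this requires the poles to be regressive (which is exactly why that hypothesis appears) and $e_z(t,0)$ to be analytic in $z$ near each $z_i$. Second, the step ``$F$ equals the sum of its principal parts'' needs $F$ to be meromorphic on all of $\mathbb{C}$ with decay at infinity in every direction, whereas the hypotheses only give analyticity and uniform decay in the region $Re_\mu(z)>Re_\mu(c)$; this gap is inherited from the statement itself rather than introduced by you, but a careful write-up should acknowledge it.
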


Motivated by Definition 3.1 in \cite{6'}, we can present the right Riemann$-$Liouville fractional integral on time scales as follows:

\begin{definition}\label{d3'}( Right Riemann$-$Liouville fractional integral on time scales)
Let $\alpha>0$, $\mathbb{T}$ be a time scale, and $f:\mathbb{T}\rightarrow\mathbb{R}$. The right Riemann$-$Liouville fractional integral of $f$ of order $\alpha$ on the time scale $\mathbb{T}$, denoted by $_bI_{\mathbb{T}}^\alpha f$, is defined by
\begin{eqnarray*}
_bI_{\mathbb{T}}^\alpha f(t)=\mathcal{L}_{\mathbb{T}}^{-1}\left[\frac{F(z)}{(-z)^\alpha}\right](t).
\end{eqnarray*}
\end{definition}

\begin{theorem}(\cite{2})\label{8}
A function $f:J\rightarrow\mathbb{R}^N$ is absolutely continuous on $J$ if and only if $f$ is $\Delta$-differentiable $\Delta-a.e.$ on $J^0$ and
\begin{eqnarray*}
f(t)=f(a)+\int_{[a,t)_\mathbb{T}}f^\Delta(s)\Delta s,\quad \forall t\in J.
\end{eqnarray*}
\end{theorem}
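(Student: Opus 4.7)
The plan is to handle the two implications separately, since the backward direction is an immediate consequence of the absolute continuity of the Lebesgue $\Delta$-integral, whereas the forward direction requires transferring the classical Lebesgue theorem from $\mathbb{R}$ to $\mathbb{T}$.

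For the backward implication, I would assume $f(t)=f(a)+\int_{[a,t)_{\mathbb{T}}} g(s)\Delta s$ with $g$ a $\Delta$-integrable function that will eventually play the role of $f^{\Delta}$. Given $\varepsilon>0$, absolute continuity of the $\Delta$-integral furnishes $\delta>0$ such that $\int_E|g|\Delta s<\varepsilon$ whenever $E\subset J$ is $\Delta$-measurable with $\mu_{\Delta}(E)<\delta$. For any finite collection of pairwise disjoint intervals $(a_k,b_k)_{\mathbb{T}}\subset J$ of total length less than $\delta$, setting $E=\bigcup_k [a_k,b_k)_{\mathbb{T}}$ gives $\sum_k|f(b_k)-f(a_k)|\le\int_E|g|\Delta s<\varepsilon$, which is absolute continuity on $\mathbb{T}$ by definition. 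Differentiation of the integral (the $\Delta$-Lebesgue theorem) yields $f^{\Delta}(t)=g(t)$ $\Delta$-a.e.

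For the forward direction, I would extend $f$ to a function $\tilde f\colon J_{\mathbb{R}}\to\mathbb{R}^N$ by linear interpolation on each gap $(t,\sigma(t))$ at a right-scattered point $t$, in the spirit of Proposition \ref{2}. A routine check shows that $\tilde f$ inherits absolute continuity on $[a,b]$ in the classical sense from $f$, since any finite disjoint family of intervals in $J_{\mathbb{R}}$ can be replaced by a family in $\mathbb{T}$ of comparable total length, up to controllable contributions from the linear pieces. The classical Lebesgue theorem then gives that $\tilde f$ is differentiable Lebesgue-a.e. on $[a,b]$ with $\tilde f(t)=\tilde f(a)+\int_a^t \tilde f'(s)\,ds$. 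At any right-dense point $t\in\mathbb{T}$ where $\tilde f'(t)$ exists one reads off $f^{\Delta}(t)=\tilde f'(t)$, while at each right-scattered $t$ the quotient $(f(\sigma(t))-f(t))/\mu(t)=f^{\Delta}(t)$ trivially exists; since the right-scattered points form an at most countable set, this yields $\Delta$-a.e. differentiability of $f$ on $J^0$.

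The desired identity then follows from the standard correspondence between the Lebesgue integral of $\tilde f'$ over $[a,t]$ and the $\Delta$-integral of $f^{\Delta}$ over $[a,t)_{\mathbb{T}}$: on each linear gap $(t_j,\sigma(t_j))$ the classical integral contributes exactly $\mu(t_j)f^{\Delta}(t_j)$, which is precisely the point-mass that $\Delta$-integration assigns to $t_j$, and on the right-dense part the two integrals agree by the analogue of Proposition \ref{2}. The main obstacle is precisely this measure-theoretic bookkeeping: one must verify that the Lebesgue-null exceptional set for $\tilde f'$ intersects $\mathbb{T}$ in a $\Delta$-null set, and that the contributions from the countably many gaps are correctly absorbed. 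Once this correspondence is in place, $f(t)=f(a)+\int_{[a,t)_{\mathbb{T}}}f^{\Delta}(s)\Delta s$ for every $t\in J$ follows at once from the classical FTC applied to $\tilde f$.
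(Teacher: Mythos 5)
The paper does not prove this statement at all: Theorem \ref{8} is quoted verbatim from \cite{2} as a preliminary, so there is no in-paper argument to compare yours against. Judged on its own, your outline is correct and follows the standard route for results of this type: the backward implication via absolute continuity of the $\Delta$-integral is clean, and the forward implication via the piecewise-linear extension $\tilde f$, the classical Lebesgue fundamental theorem, and the decomposition of the $\Delta$-measure into its Lebesgue part on right-dense points plus point masses $\mu(t_j)$ at right-scattered points is exactly how this is done in the literature. Your bookkeeping at the end is also right: the exceptional set for $\tilde f'$ meets $\mathbb{T}$ in a set that matters only at right-dense points (where singletons are $\Delta$-null), since at right-scattered points $f^\Delta$ exists automatically by continuity, and each gap contributes $\mu(t_j)f^\Delta(t_j)$ to both integrals.

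The one place where you are too quick is the ``routine check'' that $f$ absolutely continuous on $\mathbb{T}$ implies $\tilde f$ absolutely continuous on $[a,b]$. The subtlety is that a small real interval contained in a gap $(t_j,\sigma(t_j))$ picks up an increment $|f^\Delta(t_j)|$ times its length, and the slopes $f^\Delta(t_j)=(f(\sigma(t_j))-f(t_j))/\mu(t_j)$ need not be uniformly bounded over the countably many gaps. One must first note that absolute continuity on $\mathbb{T}$ forces $f$ to have bounded variation, so that $\sum_j|f(\sigma(t_j))-f(t_j)|<\infty$; then split the gaps into finitely many ``large-slope'' ones (handled by shrinking $\delta$) and a tail whose total oscillation is small. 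This transfer is precisely the content of the Cabada--Vivero criterion underlying Theorem \ref{9} (reference \cite{1}), so it is available off the shelf, but as written your sketch hides the only genuinely nontrivial estimate of the proof inside that phrase. With that step either cited or carried out, the argument is complete.
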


\begin{theorem}(\cite{1})\label{9}
A function $f:\mathbb{T}\rightarrow\mathbb{R}$ is absolutely continuous on $\mathbb{T}$ if and only if the following conditions are satisfied:
\begin{itemize}
  \item [$(i)$]
  $f$ is $\Delta-$differentiable $\Delta-a.e.$ on $J^0$ and $f^\Delta\in L^1(\mathbb{T})$.
  \item [$(ii)$]
  The equality
  \begin{eqnarray*}
  f(t)=f(a)+\int_{[a,t)_\mathbb{T}}f^\Delta(s)\Delta s
  \end{eqnarray*}
holds for every  $t\in\mathbb{T}$.
\end{itemize}
\end{theorem}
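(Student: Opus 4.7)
The plan is to reduce this characterization to Theorem \ref{8}, which already gives the equivalence on a closed bounded interval $J$ for vector-valued targets; the scalar case is just the $N=1$ specialization of that result, so the real content is lifting the statement from the bounded interval $J$ to the whole (possibly unbounded) time scale $\mathbb{T}$ by a localization argument.

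For the forward implication, I would assume $f$ is absolutely continuous on $\mathbb{T}$ and restrict to an arbitrary compact subinterval $J_b := [a,b]\cap\mathbb{T}$. Since absolute continuity is inherited by restriction, $f|_{J_b}$ is absolutely continuous on $J_b$, and Theorem \ref{8} with $N=1$ immediately yields $\Delta$-differentiability $\Delta$-a.e.\ on $[a,b)\cap\mathbb{T}$ together with the identity $f(t) = f(a) + \int_{[a,t)_\mathbb{T}} f^\Delta(s)\,\Delta s$ for every $t\in J_b$. Letting $b$ range over $\mathbb{T}$ gives conditions (i) and (ii) on all of $\mathbb{T}$. For the converse, I would take (i) and (ii), restrict to a compact $J_b$, observe that $f^\Delta|_{J_b}\in L^1(J_b)$, and invoke the reverse direction of Theorem \ref{8} to conclude that $f|_{J_b}$ is absolutely continuous on $J_b$. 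To globalize, I would exploit absolute continuity of the Lebesgue $\Delta$-integral: given $\varepsilon>0$, choose $\delta>0$ so that $\int_E |f^\Delta|\,\Delta s<\varepsilon$ whenever the $\Delta$-measure of $E$ is less than $\delta$; then for any finite disjoint family of subintervals of total length less than $\delta$, the triangle inequality applied to the integral identity in (ii) produces the required $\varepsilon$-bound.

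The main obstacle will be handling the global $L^1(\mathbb{T})$ integrability of $f^\Delta$ when $\mathbb{T}$ is unbounded, since localization only delivers $f^\Delta\in L^1_{\mathrm{loc}}$. This will typically require a monotone-convergence step to control $\int_{[a,b)_\mathbb{T}}|f^\Delta|\,\Delta s$ as $b\to\sup\mathbb{T}$, together with a uniform $\varepsilon$-$\delta$ estimate extracted from the global absolute continuity hypothesis. When $\mathbb{T}$ is bounded, which is the setting used in the remainder of the paper, this subtlety vanishes entirely and the theorem is a direct corollary of Theorem \ref{8} with $N=1$.
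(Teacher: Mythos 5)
The paper does not prove this statement at all: Theorem \ref{9} is quoted verbatim from \cite{1} (Cabada and Vivero) as a preliminary, so there is no internal proof to compare against. Your reduction to Theorem \ref{8} is a reasonable way to supply one, and in the setting the theorem actually lives in it works: the statement itself refers to $J^0=[a,b)\cap\mathbb{T}$ and to $f(a)$, so $\mathbb{T}$ is tacitly a bounded time scale with minimum $a$ and maximum $b$, and then Theorem \ref{9} is indeed the $N=1$ case of Theorem \ref{8}, with the extra clause $f^\Delta\in L^1(\mathbb{T})$ already implicit in the existence of $\int_{[a,b)_\mathbb{T}}f^\Delta(s)\,\Delta s$. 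Your $\varepsilon$--$\delta$ globalization of the converse via absolute continuity of the $\Delta$-integral of an $L^1$ function is also sound, using that $\mu_\Delta([c,d)_\mathbb{T})=d-c$ for $c,d\in\mathbb{T}$.

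Two caveats. First, the proposed rescue of the unbounded case cannot work: global absolute continuity does \emph{not} imply $f^\Delta\in L^1(\mathbb{T})$ when $\mathbb{T}$ is unbounded (take $f(t)=t$ on $\mathbb{T}=[0,\infty)$, which is absolutely continuous with $\delta=\varepsilon$ but has $f^\Delta\equiv 1\notin L^1$), so no monotone-convergence argument will produce condition (i) there; fortunately that case is outside the theorem's scope. Second, your derivation is logically admissible inside this paper, where both \ref{8} and \ref{9} are taken as given, but it is historically backwards: \cite{1} predates \cite{2}, and Theorem \ref{8} is normally obtained \emph{from} the Cabada--Vivero characterization rather than the other way around, so as a self-contained proof of \ref{9} one would still need an independent proof of \ref{8}.
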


\begin{theorem}(\cite{5})\label{10}
A function $q:J_{\mathbb{R}}\rightarrow\mathbb{R}^m$ is absolutely continuous if and only if there exist a constant $c\in\mathbb{R}^m$ and a function $\varphi\in L^1$ such that
\begin{eqnarray*}
q(t)=c+(I_{a^+}^1\varphi)(t),\quad t\in J_{\mathbb{R}}.
\end{eqnarray*}
In this case, we have $q(a)=c$ and $q'(t)=\varphi(t)$, $t\in J_{\mathbb{R}}$ a.e..
\end{theorem}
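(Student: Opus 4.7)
The plan is to establish the stated equivalence by classical real-analysis arguments on the compact real interval $J_{\mathbb{R}}=[a,b]$, relying on two standard facts: that every absolutely continuous function on a compact interval is differentiable almost everywhere with an $L^1$ derivative satisfying the Newton--Leibniz identity, and that the indefinite integral of an $L^1$ function is itself absolutely continuous. The identification of $\varphi$ with $q'$ almost everywhere will come from the Lebesgue differentiation theorem.

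For the forward implication, assume $q:J_{\mathbb{R}}\to\mathbb{R}^m$ is absolutely continuous. Arguing componentwise, the classical structure theorem for AC functions yields that $q$ is differentiable almost everywhere on $J_{\mathbb{R}}$, that $q'\in L^1(J_{\mathbb{R}};\mathbb{R}^m)$, and that $q(t)=q(a)+\int_a^t q'(s)\,ds$ for every $t\in J_{\mathbb{R}}$. Setting $c:=q(a)$ and $\varphi:=q'$ gives the desired representation $q(t)=c+(I_{a^+}^1\varphi)(t)$.

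For the backward implication, suppose $q(t)=c+\int_a^t\varphi(s)\,ds$ for some $c\in\mathbb{R}^m$ and some $\varphi\in L^1$. Fix $\epsilon>0$ and choose $\delta>0$ so that $\int_E|\varphi(s)|\,ds<\epsilon$ whenever the measurable set $E\subset J_{\mathbb{R}}$ satisfies $|E|<\delta$; such a $\delta$ exists by absolute continuity of the Lebesgue integral applied to $|\varphi|\in L^1$. For any finite family of pairwise disjoint subintervals $(a_k,b_k)\subset J_{\mathbb{R}}$ with $\sum_k(b_k-a_k)<\delta$, we estimate
\[
\sum_k|q(b_k)-q(a_k)|=\sum_k\Bigl|\int_{a_k}^{b_k}\varphi(s)\,ds\Bigr|\le\int_{\bigcup_k(a_k,b_k)}|\varphi(s)|\,ds<\epsilon,
\]
which is precisely the definition of absolute continuity of $q$.

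Finally, setting $t=a$ in the representation yields $q(a)=c$, and the Lebesgue differentiation theorem, applied componentwise to $\varphi$, gives $q'(t)=\varphi(t)$ for almost every $t\in J_{\mathbb{R}}$. The main conceptual ingredient hidden in this outline is the Lebesgue differentiation theorem, which is the nontrivial foundation underlying both the Newton--Leibniz identity for AC functions in the forward direction and the pointwise identification $q'=\varphi$ at the end; the remaining steps are routine manipulations and direct appeals to the absolute continuity of the indefinite integral.
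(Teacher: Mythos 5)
Your proof is correct: the forward direction is the classical Lebesgue structure theorem for absolutely continuous functions applied componentwise, the backward direction follows from the absolute continuity of the Lebesgue integral, and the identifications $q(a)=c$ and $q'=\varphi$ a.e.\ are handled properly (the latter via the Lebesgue differentiation theorem). Note that the paper itself gives no proof of this statement --- it is quoted from the reference of Bourdin and Idczak as a known preliminary --- so there is no in-paper argument to compare against; your write-up is the standard textbook proof and fills that gap adequately.
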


\begin{theorem}(\cite{5}) (integral representation)\label{11}
Let $\alpha\in(0,1]$ and $q\in L^1$. Then, $q$ has a right$-$sided Riemann$-$Liouville derivative $D_{b^-}^\alpha q$ of order $\alpha$ if and only if there exist a constant $d\in\mathbb{R}^m$ and a function $\psi\in L^1$ such that
\begin{eqnarray*}
q(t)=\frac{1}{\Gamma(\alpha)}\frac{d}{(b-t)^{1-\alpha}}+(I_{b^-}^\alpha\psi)(t),\quad t\in J_{R}\quad a.e..
\end{eqnarray*}
In this case, we have $I_{b^-}^{1-\alpha}q(b)=d$ and $(D_{b^-}^\alpha q)(t)=\psi(t)$, $t\in J_{\mathbb{R}}$ a.e..
\end{theorem}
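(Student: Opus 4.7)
The plan is to establish the two directions of the equivalence separately, relying on the characterisation of absolutely continuous functions given in Theorem \ref{10}, the semigroup property $I_{b^-}^{\alpha}I_{b^-}^{\beta}=I_{b^-}^{\alpha+\beta}$ on $L^{1}$, and the computation $I_{b^-}^{1-\alpha}\bigl[(b-\cdot)^{\alpha-1}\bigr](t)=\Gamma(\alpha)$, which follows from a Beta-function integral.

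For sufficiency, I would suppose $q(t)=\frac{d}{\Gamma(\alpha)}(b-t)^{\alpha-1}+(I_{b^-}^{\alpha}\psi)(t)$ a.e. with $\psi\in L^{1}$, and apply $I_{b^-}^{1-\alpha}$ termwise: the first term contributes $d$ by the Beta-function identity, and the second becomes $(I_{b^-}^{1}\psi)(t)=\int_{t}^{b}\psi(s)\,ds$ by the semigroup property. Hence $I_{b^-}^{1-\alpha}q(t)=d+\int_{t}^{b}\psi(s)\,ds$, which is absolutely continuous on $J_{\mathbb{R}}$. Differentiating and negating (by the definition of $D_{b^-}^{\alpha}$) gives $(D_{b^-}^{\alpha}q)(t)=\psi(t)$ a.e., while evaluating at $t=b$ yields $d=I_{b^-}^{1-\alpha}q(b)$.

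For necessity, I would assume $D_{b^-}^{\alpha}q$ exists, i.e., that $I_{b^-}^{1-\alpha}q$ is absolutely continuous on $J_{\mathbb{R}}$ with $L^{1}$ derivative. Setting $d:=I_{b^-}^{1-\alpha}q(b)$ and $\psi:=D_{b^-}^{\alpha}q\in L^{1}$, the right-endpoint analogue of Theorem \ref{10} gives $I_{b^-}^{1-\alpha}q(t)=d+\int_{t}^{b}\psi(s)\,ds=d+(I_{b^-}^{1}\psi)(t)$. Applying $I_{b^-}^{\alpha}$ to both sides and using the semigroup property yields $(I_{b^-}^{1}q)(t)=\frac{d(b-t)^{\alpha}}{\Gamma(\alpha+1)}+(I_{b^-}^{\alpha+1}\psi)(t)$; taking $-\frac{d}{dt}$ then produces the required representation $q(t)=\frac{d}{\Gamma(\alpha)}(b-t)^{\alpha-1}+(I_{b^-}^{\alpha}\psi)(t)$ a.e.

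The main obstacle is the rigorous justification of the semigroup identity $I_{b^-}^{1-\alpha}I_{b^-}^{\alpha}=I_{b^-}^{1}$ for general $L^{1}$ functions: this requires interchanging the order of integration via Fubini in an iterated integral whose inner integrand has two integrable singularities, and then identifying the resulting inner integral as $B(\alpha,1-\alpha)=\Gamma(\alpha)\Gamma(1-\alpha)$. A secondary difficulty is the boundary behaviour at $t=b$, since $q\in L^{1}$ need not be continuous there; one must invoke the Lebesgue differentiation theorem together with the absolute continuity of $I_{b^-}^{1-\alpha}q$ to obtain the pointwise identities for the appropriate representatives.
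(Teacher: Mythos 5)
The paper does not prove Theorem \ref{11} at all --- it is quoted verbatim from \cite{5} as a known result --- so the only available comparison is with the paper's proof of its time-scale analogue, Theorem \ref{23}, which proceeds exactly as you do: apply the fractional integral of complementary order, invoke the integral representation of absolutely continuous functions and the semigroup property, then differentiate. Your argument is correct, and you rightly flag the two genuine technical points (Fubini for $I_{b^-}^{1-\alpha}I_{b^-}^{\alpha}=I_{b^-}^{1}$ on $L^{1}$ via the Beta integral, and the identification of the correct a.e.\ representatives at the endpoint), so no gap remains.
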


\begin{lemma}(\cite{7})\label{13}
Let $f\in L_\Delta^1(J^0)$. Then, a necessary and sufficient condition for the validity of the equality
\begin{eqnarray*}
\int_{J^0}(f\cdot\varphi^\Delta)(s)\Delta s=0,\quad for\,\,every\,\,\varphi\in C_{0,rd}^1(J^k)
\end{eqnarray*}
is the existence of a constant $c\in\mathbb{R}$ such that
\begin{eqnarray*}
f\equiv c \quad \Delta-a.e. \,\,on\,\, J^0.
\end{eqnarray*}
\end{lemma}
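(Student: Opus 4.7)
The ``if'' direction is immediate: if $f\equiv c$ $\Delta$-a.e.\ on $J^0$, then
\begin{equation*}
\int_{J^0}f(s)\varphi^\Delta(s)\Delta s=c\int_{J^0}\varphi^\Delta(s)\Delta s=c\bigl(\varphi(b)-\varphi(a)\bigr)=0,
\end{equation*}
since every element of $C_{0,rd}^1(J^k)$ vanishes at the endpoints. So the real content is the ``only if'' direction.

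My strategy for necessity is the classical Du~Bois--Reymond trick transferred to the time-scale setting: pick the mean value as the candidate constant,
\begin{equation*}
c:=\frac{1}{b-a}\int_{J^0}f(s)\Delta s,
\end{equation*}
and then, for each rd-continuous function $g$ on $J^0$, construct an admissible test function whose $\Delta$-derivative is $g$ minus its mean. Explicitly, for arbitrary $g\in C_{rd}(J^0)$ set $\bar g:=\frac{1}{b-a}\int_{J^0}g(\tau)\Delta\tau$ and define
\begin{equation*}
\varphi(t):=\int_{[a,t)_{\mathbb{T}}}\bigl(g(s)-\bar g\bigr)\Delta s,\qquad t\in J.
\end{equation*}
By Theorems \ref{8} and \ref{9}, $\varphi$ is absolutely continuous on $J$ with rd-continuous $\Delta$-derivative $\varphi^\Delta=g-\bar g$, and a direct computation gives $\varphi(a)=0$ and $\varphi(b)=\int_{J^0}(g(s)-\bar g)\Delta s=0$. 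Hence $\varphi\in C_{0,rd}^1(J^k)$ and is a legitimate test function in the hypothesis.

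Substituting this $\varphi$ into the identity and using $c(b-a)=\int_{J^0}f\,\Delta s$ together with $\bar g(b-a)=\int_{J^0}g\,\Delta s$ produces
\begin{equation*}
0=\int_{J^0}f(s)\bigl(g(s)-\bar g\bigr)\Delta s=\int_{J^0}\bigl(f(s)-c\bigr)g(s)\Delta s,
\end{equation*}
which is valid for \emph{every} $g\in C_{rd}(J^0)$. A standard density argument (the rd-continuous functions are dense in $L_\Delta^1(J^0)$, and $f-c$ already belongs to $L_\Delta^1(J^0)$) then forces $f\equiv c$ $\Delta$-a.e.\ on $J^0$. \emph{The main obstacle} is the construction step: one must verify that the antiderivative $\varphi$ really lives inside the small test-function space $C_{0,rd}^1(J^k)$ — in particular that $\varphi^\Delta$ is rd-continuous, not merely $L_\Delta^1$-integrable — and it is precisely here that the absolute-continuity characterizations in Theorems \ref{8} and \ref{9}, rather than the classical fundamental theorem of calculus, are indispensable.
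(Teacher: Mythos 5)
The paper does not prove this lemma at all: it is quoted verbatim from reference \cite{7} (it is the time-scale Du~Bois--Reymond lemma of Agarwal, Otero-Espinar, Perera and Vivero), so there is no in-paper argument to compare against. Your proof follows the standard Du~Bois--Reymond construction, which is essentially the argument in the cited source, and the overall structure is sound: sufficiency is the one-line computation (note only that $C_{0,rd}^1(J^k)$ requires $\varphi(a)=\varphi(b)$, not $\varphi(a)=\varphi(b)=0$, though this changes nothing since $c(\varphi(b)-\varphi(a))=0$ either way), and necessity via the test function $\varphi(t)=\int_{[a,t)_{\mathbb T}}(g-\bar g)\,\Delta s$ correctly reduces the problem to $\int_{J^0}(f-c)\,g\,\Delta s=0$ for all $g\in C_{rd}$.

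The one step whose justification is genuinely wrong as written is the last one. You invoke density of $C_{rd}$ in $L_\Delta^1(J^0)$ to conclude $f-c=0$ $\Delta$-a.e., but that density statement points the wrong way: since $f-c$ lies only in $L_\Delta^1$, the pairing $g\mapsto\int(f-c)g\,\Delta s$ is continuous in $g$ with respect to the $L_\Delta^\infty$ norm, not the $L_\Delta^1$ norm, and $C_{rd}$ is not dense in $L_\Delta^\infty$. The correct finish is the dual approximation: use a Lusin-type argument to produce rd-continuous $g_n$ with $|g_n|\le 1$ and $g_n\to\mathrm{sgn}(f-c)$ $\Delta$-a.e., then dominated convergence gives $\int_{J^0}|f-c|\,\Delta s=\lim_n\int_{J^0}(f-c)g_n\,\Delta s=0$. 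A smaller point: to place $\varphi$ in $C_{rd}^1(J^k)$ you need $\varphi^\Delta=g-\bar g$ \emph{everywhere} on $J^k$, which comes from the fundamental theorem of calculus for rd-continuous integrands on time scales; Theorems \ref{8} and \ref{9} only yield $\Delta$-a.e.\ differentiability and so would not by themselves certify membership in the test-function class.
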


\begin{definition}(\cite{7})\label{14}
Let $p\in \bar{\mathbb{R}}$ be such that $p\geq 1$ and $u:J\rightarrow\bar{\mathbb{R}}$. Say that $u$ belongs to $W_{\Delta}^{1,p}(J)$ if and only if $u\in L_{\Delta}^p(J^0)$ and there exists $g:J^k\rightarrow\bar{\mathbb{R}}$ such that $g\in L_{\Delta}^p(J^0)$ and
\begin{eqnarray*}
\int_{J^0}(u\cdot\varphi^\Delta)(s)\Delta s=-\int_{J^0}(g\cdot\varphi^\sigma)(s)\Delta s,\quad \forall\varphi\in C_{0,rd}^1(J^k),
\end{eqnarray*}
where
\begin{eqnarray*}
C_{0,rd}^1(J^k):=\bigg\{f:J\rightarrow\mathbb{R}:f\in C_{rd}^1(J^k),\,f(a)=f(b)\bigg\}
\end{eqnarray*}
and $C_{rd}^1(J^k)$ is the set of all continuous functions on $J$ such that they are $\Delta-$differential on $J^k$ and their $\Delta-$derivatives are $rd-$continuous on $J^k$.
\end{definition}

\begin{theorem}(\cite{7})\label{15}
Let $p\in\bar{\mathbb{R}}$ be such that $p\geq 1$. Then, the set $L_\Delta^p(J^0)$ is a Banach space together with the norm defined for every $f\in L_\Delta^p(J^0)$ as
\begin{equation*}
\|f\|_{L_\Delta^p}:=
\left\{
\begin{aligned}
&\bigg[\int_{J^0}\vert f\vert^p(s)\Delta s\bigg]^{\frac{1}{p}},&\quad if\,\,p\in \mathbb{R},\\
&\inf\{C\in\mathbb{R}:\vert f\vert\leq C \,\Delta-a.e. \,\, on \,\,J^0\},&\quad if\,\,p=+\infty.
\end{aligned}
\right.
\end{equation*}
Moreover, $L_\Delta^2(J^0)$ is a Hilbert space together with the inner product given for every $(f,g)\in L_\Delta^2(J^0)\times L_\Delta^2(J^0)$ by
\begin{eqnarray*}
(f,g)_{L_\Delta^2}:=\int_{J^0}f(s)\cdot g(s)\Delta s.
\end{eqnarray*}
\end{theorem}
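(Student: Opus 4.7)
The plan is to realize $L_\Delta^p(J^0)$ as the classical Lebesgue space $L^p$ of the measure space $(J^0,\mathcal{M}_\Delta,\mu_\Delta)$, where $\mu_\Delta$ is the Lebesgue $\Delta$-measure on $J^0$, and then deduce the statement from the standard Riesz--Fischer theorem. The preliminary step is to recall (from the time-scale measure theory built in, e.g., the references that the authors already rely on in \cite{7,1,2}) that $\mu_\Delta$ is a $\sigma$-finite complete Borel measure on $J^0$, that the $\Delta$-integral agrees with the Lebesgue integral against $\mu_\Delta$ whenever either side is defined, and that a function is $\Delta$-measurable iff it is $\mu_\Delta$-measurable. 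Once this identification is in place, the notion ``$\Delta$-a.e.'' coincides with ``$\mu_\Delta$-a.e.'', so $L_\Delta^p(J^0)$ is literally the classical $L^p(J^0,\mu_\Delta)$.

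From this identification I would first verify that $\|\cdot\|_{L_\Delta^p}$ is a norm on equivalence classes modulo $\Delta$-a.e.\ equality. Absolute homogeneity and the vanishing condition are immediate. The triangle inequality for $p\in[1,\infty)$ follows from Minkowski's inequality applied inside the $\mu_\Delta$-integral, which in turn follows from H\"older's inequality; both inequalities hold verbatim because they depend only on the measure-space structure. The $p=\infty$ case is the familiar essential supremum argument, with ``essential'' now interpreted with respect to $\mu_\Delta$. Completeness is then the classical Riesz--Fischer argument: given a Cauchy sequence $(f_n)$ in $L_\Delta^p(J^0)$, extract a subsequence $(f_{n_k})$ with $\|f_{n_{k+1}}-f_{n_k}\|_{L_\Delta^p}\le 2^{-k}$, form the telescoping majorant $g=\sum_k|f_{n_{k+1}}-f_{n_k}|$, use the monotone convergence theorem for $\mu_\Delta$ to see $g\in L_\Delta^p(J^0)$, conclude that $f_{n_k}$ converges $\Delta$-a.e.\ to some limit $f$, and finally apply Fatou's lemma (for $\mu_\Delta$) to upgrade pointwise convergence of the subsequence to $L_\Delta^p$-convergence of the full sequence.

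For the Hilbert space assertion with $p=2$, I would check that the prescribed pairing $(f,g)_{L_\Delta^2}=\int_{J^0} f(s)g(s)\,\Delta s$ is well defined by Cauchy--Schwarz (the special case of H\"older with conjugate exponents $2,2$), and then verify bilinearity, symmetry, and positive-definiteness, the last one using that $\int_{J^0}|f|^2\,\Delta s=0$ forces $f=0$ $\Delta$-a.e.\ on $J^0$. Since the induced norm $\sqrt{(f,f)_{L_\Delta^2}}$ clearly equals $\|f\|_{L_\Delta^2}$, the already-established completeness in the $p=2$ case promotes $L_\Delta^2(J^0)$ to a Hilbert space.

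The main obstacle, and the only place where something genuinely time-scale-specific happens, is the measure-theoretic identification in the first step: one must handle right-scattered points of $\mathbb{T}$, which carry positive $\mu_\Delta$-mass $\mu(t)=\sigma(t)-t$, so that the $\Delta$-integral over $[t,\sigma(t))_{\mathbb{T}}$ contributes a point-mass term $\mu(t)f(t)$. Verifying that H\"older, Minkowski, monotone convergence, and Fatou still apply in this atomic/non-atomic mixed setting is the essential input; once that is granted, every subsequent step above is a direct transcription of the classical $L^p$ proofs. The remaining arguments (triangle inequality, Riesz--Fischer completeness, inner-product verification) are then routine.
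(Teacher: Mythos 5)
Your proposal is correct, and it is essentially the standard argument: the paper itself states this theorem without proof, quoting it from the reference \cite{7}, and the proof given there proceeds exactly as you describe, by identifying $L_\Delta^p(J^0)$ with the classical $L^p$ space of the Lebesgue $\Delta$-measure $\mu_\Delta$ (under which right-scattered points are atoms of mass $\sigma(t)-t$) and then invoking the Riesz--Fischer theorem. The only remark worth adding is that your final ``obstacle'' is lighter than you suggest: once $\mu_\Delta$ is known to be a genuine complete (indeed finite, on the bounded interval $J^0$) measure and the $\Delta$-integral is the abstract Lebesgue integral against it, H\"older, Minkowski, monotone convergence and Fatou hold automatically for the mixed atomic/non-atomic measure space with no separate verification needed.
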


\begin{theorem}(\cite{4})\label{16}
Fractional integration operators are bounded in $L^p(J_{\mathbb{R}})$, i.e., the following estimate
\begin{eqnarray*}
\|I_{a^+}^\alpha\varphi\|_{L^p(a,b)}\leq\frac{(b-a)^{Re\alpha}}{Re\alpha\vert\Gamma(\alpha)\vert}\|\varphi\|_{L^p(J_{\mathbb{R}})},\quad Re\alpha>0
\end{eqnarray*}
holds.
\end{theorem}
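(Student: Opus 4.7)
The plan is to realize the fractional integration operator $I_{a^+}^{\alpha}$ as convolution with an $L^1$-kernel on the finite interval $(0,b-a)$ and then to invoke either Young's inequality or, equivalently, Minkowski's integral inequality.

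First I would substitute $r=t-s$ in the definition
\[I_{a^+}^{\alpha}\varphi(t)=\frac{1}{\Gamma(\alpha)}\int_a^t(t-s)^{\alpha-1}\varphi(s)\,ds\]
to obtain the equivalent form
\[I_{a^+}^{\alpha}\varphi(t)=\frac{1}{\Gamma(\alpha)}\int_0^{t-a}r^{\alpha-1}\varphi(t-r)\,dr.\]
Extending $\varphi$ by zero outside $[a,b]$ and denoting the extension by $\tilde\varphi$, this rewrites as an integral over a \emph{fixed} interval:
\[I_{a^+}^{\alpha}\varphi(t)=\frac{1}{\Gamma(\alpha)}\int_0^{b-a}r^{\alpha-1}\tilde\varphi(t-r)\,dr,\qquad t\in(a,b).\]

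Next I would apply Minkowski's integral inequality in the variable $t\in(a,b)$, pulling the $L^p$ norm inside the $dr$ integral:
\[\|I_{a^+}^{\alpha}\varphi\|_{L^p(a,b)}\leq\frac{1}{|\Gamma(\alpha)|}\int_0^{b-a}|r^{\alpha-1}|\,\|\tilde\varphi(\cdot-r)\|_{L^p(a,b)}\,dr.\]
Since $|r^{\alpha-1}|=r^{Re\alpha-1}$ for $r>0$, and since the translated function $\tilde\varphi(\cdot-r)$ has $L^p(\mathbb{R})$ norm equal to $\|\varphi\|_{L^p(J_{\mathbb{R}})}$, the estimate reduces to a scalar integral:
\[\|I_{a^+}^{\alpha}\varphi\|_{L^p(a,b)}\leq\frac{\|\varphi\|_{L^p(J_{\mathbb{R}})}}{|\Gamma(\alpha)|}\int_0^{b-a}r^{Re\alpha-1}\,dr=\frac{(b-a)^{Re\alpha}}{Re\alpha\,|\Gamma(\alpha)|}\|\varphi\|_{L^p(J_{\mathbb{R}})},\]
which is precisely the claimed bound.

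I do not anticipate any serious obstacle here; the points that need a little care are the complex-exponent identity $|r^{\alpha-1}|=r^{Re\alpha-1}$, the finiteness of $\int_0^{b-a}r^{Re\alpha-1}\,dr$ (guaranteed exactly by the hypothesis $Re\alpha>0$), and the joint measurability of $(t,r)\mapsto r^{\alpha-1}\tilde\varphi(t-r)$ needed to justify Minkowski's integral inequality (which follows from Fubini). A fully equivalent alternative is to write $I_{a^+}^{\alpha}\varphi=K\ast\tilde\varphi$ with $K(r)=\Gamma(\alpha)^{-1}r^{\alpha-1}\chi_{(0,b-a]}(r)$, compute $\|K\|_{L^1(\mathbb{R})}=(b-a)^{Re\alpha}/(Re\alpha\,|\Gamma(\alpha)|)$, and apply Young's convolution inequality $\|K\ast\tilde\varphi\|_{L^p}\leq\|K\|_{L^1}\|\tilde\varphi\|_{L^p}$ to conclude.
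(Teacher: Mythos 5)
Your proof is correct; the paper itself gives no proof of this statement, quoting it directly from the cited reference of Samko, Kilbas and Marichev, where the argument is exactly the one you give (write $I_{a^+}^{\alpha}\varphi$ as convolution with the kernel $\Gamma(\alpha)^{-1}r^{\alpha-1}\chi_{(0,b-a]}(r)$ and apply the generalized Minkowski inequality). All the delicate points you flag — $\vert r^{\alpha-1}\vert=r^{\mathrm{Re}\,\alpha-1}$, integrability of the kernel from $\mathrm{Re}\,\alpha>0$, and measurability for Minkowski — are handled correctly.
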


\begin{proposition}(\cite{7})\label{17}
Suppose $p\in\bar{\mathbb{R}}$ and $p\geq 1$. Let $p'\in\bar{\mathbb{R}}$ be such that $\frac{1}{p'}+\frac{1}{p'}=1$. Then, if $f\in L_\Delta^p(J^0)$ and $g\in L_\Delta^{p'}(J^0)$, then $f\cdot g\in L_\Delta^1(J^0)$ and
\begin{eqnarray*}
\|f\cdot g\|_{L_\Delta^1}\leq\|f\|_{L_\Delta^p}\cdot\|g\|_{L_\Delta^{p'}}.
\end{eqnarray*}
This expression is called H$\ddot{o}$lder's inequality and Cauchy$-$Schwarz's inequality whenever $p=2$.
\end{proposition}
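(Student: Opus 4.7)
The plan is to reduce this to Young's inequality applied pointwise $\Delta$-almost everywhere and then integrate, mirroring the proof in the classical Lebesgue setting but using the $\Delta$-integral properties already established (in particular the norm formula from Theorem~\ref{15}). First I would dispose of the degenerate cases. If $\|f\|_{L_\Delta^p}=0$ or $\|g\|_{L_\Delta^{p'}}=0$, then $f=0$ or $g=0$ $\Delta$-a.e.\ on $J^0$, so $f\cdot g=0$ $\Delta$-a.e.\ and both sides of the claimed inequality vanish. The boundary cases $p=1,p'=+\infty$ (and symmetrically $p=+\infty,p'=1$) follow immediately from the $L^\infty$ definition: $|g|\le \|g\|_{L_\Delta^\infty}$ $\Delta$-a.e., so $|f\cdot g|\le \|g\|_{L_\Delta^\infty}|f|$ $\Delta$-a.e., and integrating with respect to the $\Delta$-measure gives the result.

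For the main case $1<p<\infty$ (so $1<p'<\infty$ with $\tfrac1p+\tfrac1{p'}=1$), I would normalize. Put
\[
F(s):=\frac{|f(s)|}{\|f\|_{L_\Delta^p}},\qquad G(s):=\frac{|g(s)|}{\|g\|_{L_\Delta^{p'}}},
\]
and apply Young's inequality $AB\le \tfrac{A^p}{p}+\tfrac{B^{p'}}{p'}$ (a purely pointwise algebraic statement, proved e.g.\ by concavity of $\log$ or convexity of $\exp$) with $A=F(s)$ and $B=G(s)$. This yields
\[
F(s)G(s)\le \frac{F(s)^p}{p}+\frac{G(s)^{p'}}{p'}\qquad \Delta\text{-a.e.\ on }J^0.
\]
The right-hand side is measurable and belongs to $L_\Delta^1(J^0)$ by the definition of the $L_\Delta^p$-norm, so $F\cdot G$ is $\Delta$-measurable and $\Delta$-integrable; unwinding the normalization gives $f\cdot g\in L_\Delta^1(J^0)$.

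Finally I would $\Delta$-integrate the pointwise bound over $J^0$ and use linearity and monotonicity of the $\Delta$-integral together with $\int_{J^0}F^p\,\Delta s=1$ and $\int_{J^0}G^{p'}\,\Delta s=1$:
\[
\int_{J^0}F(s)G(s)\,\Delta s\;\le\;\frac{1}{p}\int_{J^0}F(s)^p\,\Delta s+\frac{1}{p'}\int_{J^0}G(s)^{p'}\,\Delta s=\frac{1}{p}+\frac{1}{p'}=1.
\]
Multiplying through by $\|f\|_{L_\Delta^p}\|g\|_{L_\Delta^{p'}}$ and noting $|\int f\cdot g\,\Delta s|\le \int |f\cdot g|\,\Delta s$ delivers the desired inequality.

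I do not expect any serious obstacle: Young's inequality is classical and independent of the underlying measure space, while linearity, monotonicity and the norm formulas for the $\Delta$-integral are already in place via Theorem~\ref{15}. The only point worth being careful about is justifying $\Delta$-measurability of the product $f\cdot g$ and of $F^p,G^{p'}$ from $\Delta$-measurability of $f$ and $g$; this is a standard fact for the $\Delta$-measure (composition with continuous functions preserves $\Delta$-measurability) and is implicit in the definition of the spaces $L_\Delta^p(J^0)$ used in Theorem~\ref{15}.
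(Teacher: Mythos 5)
The paper offers no proof of this proposition: it is quoted verbatim (typo in the conjugate-exponent condition included) from the reference on Sobolev spaces on time scales, so there is nothing internal to compare your argument against. Your proof is the standard one --- dispose of the degenerate and endpoint cases $p=1,\,p'=\infty$, normalize, apply Young's inequality $AB\le \frac{A^p}{p}+\frac{B^{p'}}{p'}$ pointwise $\Delta$-a.e., and integrate using the linearity and monotonicity of the $\Delta$-integral --- and it is correct; the argument is measure-theoretically generic and transfers to the $\Delta$-measure setting without modification, the only point needing a remark being $\Delta$-measurability of the product and of the powers, which you flag appropriately.
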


\begin{theorem}(\cite{8}) (First Mean Value Theorem)\label{18}
Let $f$ and $g$ be bounded and integrable functions on $J$, and let $g$ be nonnegative (or nonpositive) on $J$. Let us set
\begin{eqnarray*}
m=\inf\{f(t):t\in J^0\} \quad and \quad M=\sup\{f(t):t\in J^0\}.
\end{eqnarray*}
Then there exists a real number $\Lambda$ satisfying the inequalities $m\leq \Lambda\leq M$ such that
\begin{eqnarray*}
\int_a^bf(t)g(t)\Delta t=\Lambda\int_a^bg(t)\Delta t.
\end{eqnarray*}
\end{theorem}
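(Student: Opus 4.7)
The plan is to follow the classical proof of the first mean value theorem, adapted to the delta integral, reducing everything to monotonicity of the integral and a simple division.

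First, I would treat the nonnegative case (the nonpositive case follows by replacing $g$ with $-g$, since $m,M$ just flip sign and so does $\Lambda$). Starting from the pointwise bounds
\begin{equation*}
m\,g(t)\le f(t)g(t)\le M\,g(t),\qquad t\in J^{0},
\end{equation*}
which use only $g(t)\ge 0$ and the definitions of $m,M$, I would integrate over $J^{0}$ using the monotonicity of the $\Delta$-integral (valid because $f$, $g$, and $fg$ are bounded and integrable on $J$). This yields
\begin{equation*}
m\int_{a}^{b}g(t)\,\Delta t\le \int_{a}^{b}f(t)g(t)\,\Delta t\le M\int_{a}^{b}g(t)\,\Delta t.
\end{equation*}

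Next I would split into two cases according to the value of $I:=\int_{a}^{b}g(t)\,\Delta t\ge 0$. If $I>0$, I simply set
\begin{equation*}
\Lambda:=\frac{\int_{a}^{b}f(t)g(t)\,\Delta t}{\int_{a}^{b}g(t)\,\Delta t},
\end{equation*}
which by the sandwich inequality above lies in $[m,M]$ and manifestly gives the required identity $\int_{a}^{b}fg\,\Delta t=\Lambda\,I$. If $I=0$, the same sandwich forces $\int_{a}^{b}f(t)g(t)\,\Delta t=0$, so the identity $\int_{a}^{b}fg\,\Delta t=\Lambda\cdot 0$ is satisfied by any real $\Lambda$, and in particular by any choice in $[m,M]$ (e.g.\ $\Lambda=m$).

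The only delicate point is the degenerate case $I=0$, which on a general time scale can happen even for a nontrivial nonnegative $g$ (for instance if $g$ is supported on right-scattered points of zero $\Delta$-measure in the relevant sense); but, as noted, the sandwich inequality handles it for free because it also forces the left-hand side to vanish. Everything else is immediate from the monotonicity and linearity of the delta integral, so I do not anticipate any real obstacle beyond stating the nonpositive case as a symmetric corollary.
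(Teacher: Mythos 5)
Your proof is correct. Note, however, that the paper does not prove this statement at all: Theorem \ref{18} is quoted verbatim from the reference \cite{8} (Bohner--Peterson) as a preliminary, so there is no in-paper argument to compare against. What you give is the standard proof --- sandwich $m\,g(t)\le f(t)g(t)\le M\,g(t)$ pointwise, integrate using monotonicity of the $\Delta$-integral, then either divide by $\int_a^b g\,\Delta t$ when it is positive or observe that the sandwich forces $\int_a^b fg\,\Delta t=0$ when it vanishes --- and every step transfers to the delta integral without change, since monotonicity and linearity hold there. The reduction of the nonpositive case to the nonnegative one via $g\mapsto -g$ is also fine. One small inaccuracy in your closing aside: a right-scattered point $t$ has $\Delta$-measure $\mu(t)=\sigma(t)-t>0$, so a nonnegative $g$ supported on right-scattered points cannot produce $\int_a^b g\,\Delta t=0$ unless $g$ vanishes there; the genuinely degenerate case arises when $g$ vanishes $\Delta$-a.e.\ (i.e.\ off a $\Delta$-null set, which necessarily consists of right-dense points of Lebesgue measure zero). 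This does not affect the proof, since your case analysis covers $\int_a^b g\,\Delta t=0$ however it arises.
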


\begin{corollary}(\cite{8})\label{19}
Let $f$ be an integrable function on $J$ and let $m$ and $M$ be the infimum and supremum, respectively, of $f$ on $J^0$. Then there exists a number $\Lambda$ between $m$ and $M$ such that
\begin{eqnarray*}
\int_a^bf(t)\Delta t=\Lambda(b-a).
\end{eqnarray*}
\end{corollary}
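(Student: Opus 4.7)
The plan is to apply the First Mean Value Theorem (Theorem \ref{18}) directly, taking the auxiliary weight function $g$ to be the constant function $g \equiv 1$ on $J$. The corollary is really just the specialization of Theorem \ref{18} to this uniform weight.

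First I would verify that $g \equiv 1$ satisfies all hypotheses of Theorem \ref{18}: it is clearly bounded on $J$, it is nonnegative, and it is $\Delta$-integrable on $J$ (since the constant $1$ is rd-continuous on the closed bounded interval $J$ and admits the delta antiderivative $F(t) = t - a$, giving $F^\Delta(t) = 1$). The function $f$ is assumed integrable on $J$ and has infimum $m$ and supremum $M$ on $J^0$, matching the hypotheses on $f$ in Theorem \ref{18}.

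Next I would compute
\[
\int_a^b g(t)\,\Delta t = \int_a^b 1\,\Delta t = F(b) - F(a) = b - a,
\]
using the definition of the delta integral. Plugging this into the conclusion of Theorem \ref{18} yields a number $\Lambda$ with $m \leq \Lambda \leq M$ such that
\[
\int_a^b f(t)\,\Delta t = \Lambda \int_a^b g(t)\,\Delta t = \Lambda (b-a),
\]
which is precisely the required identity.

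There is essentially no obstacle here: all the analytic work is encapsulated in the preceding First Mean Value Theorem, and the only fact to verify on the spot is the elementary evaluation $\int_a^b 1\,\Delta t = b - a$. Thus the corollary follows in one line once $g \equiv 1$ is chosen.
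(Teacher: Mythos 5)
Your proof is correct and is exactly the intended derivation: the paper cites this corollary from the literature without proof, but it is stated immediately after the First Mean Value Theorem (Theorem \ref{18}) precisely because it is that theorem specialized to $g\equiv 1$, together with the elementary evaluation $\int_a^b 1\,\Delta t=b-a$. The only cosmetic remark is that Theorem \ref{18} assumes $f$ bounded as well as integrable, which is implicit in the corollary's reference to finite infimum $m$ and supremum $M$.
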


\begin{theorem}(\cite{8})\label{20}
Let $f$ be a function defined on $J$ and let $c\in\mathbb{T}$ with $a<c<b$. If $f$ is $\Delta-$integrable from $a$ to $c$ and from $c$ to $b$, then $f$ is $\Delta-$integrable from $a$ to $b$ and
\begin{eqnarray*}
\int_a^bf(t)\Delta t=\int_a^cf(t)\Delta t+\int_c^bf(t)\Delta t.
\end{eqnarray*}
\end{theorem}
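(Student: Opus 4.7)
The plan is to prove the additivity of the $\Delta$-integral by constructing a single delta antiderivative on $[a,b]\cap\mathbb{T}$ from the two antiderivatives that the hypotheses provide. By the definition of $\Delta$-integrability recalled earlier in the preliminaries, pick a continuous $F_1:[a,c]\cap\mathbb{T}\to\mathbb{R}$ with $F_1^\Delta(t)=f(t)$ on $[a,c)\cap\mathbb{T}$ and a continuous $F_2:[c,b]\cap\mathbb{T}\to\mathbb{R}$ with $F_2^\Delta(t)=f(t)$ on $[c,b)\cap\mathbb{T}$, so that
$$\int_a^c f(t)\,\Delta t = F_1(c)-F_1(a),\qquad \int_c^b f(t)\,\Delta t = F_2(b)-F_2(c).$$
Set $k=F_1(c)-F_2(c)$ and define $F:[a,b]\cap\mathbb{T}\to\mathbb{R}$ by $F(t)=F_1(t)$ for $t\in[a,c]\cap\mathbb{T}$ and $F(t)=F_2(t)+k$ for $t\in[c,b]\cap\mathbb{T}$. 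The two formulas coincide at $t=c$, so $F$ is well defined and, being continuous on each piece, continuous on $[a,b]\cap\mathbb{T}$.

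Next I would verify that $F^{\Delta}(t)=f(t)$ for every $t\in[a,b)\cap\mathbb{T}$. For $t\in[a,c)\cap\mathbb{T}$ with $\sigma(t)\le c$, a small enough time-scale neighborhood of $t$ lies in $[a,c]\cap\mathbb{T}$, so $F$ agrees with $F_1$ there and $F^\Delta(t)=F_1^\Delta(t)=f(t)$. For $t\in(c,b)\cap\mathbb{T}$, the analogous argument with $F_2$ gives $F^\Delta(t)=f(t)$. The delicate point is $t=c$. If $c$ is right-scattered with $\sigma(c)\in[c,b]\cap\mathbb{T}$, direct computation gives
$$F^\Delta(c)=\frac{F(\sigma(c))-F(c)}{\mu(c)}=\frac{F_2(\sigma(c))+k-(F_2(c)+k)}{\mu(c)}=F_2^\Delta(c)=f(c).$$
If $c$ is right-dense, the $\varepsilon$-$\delta$ estimate for $s>c$ is supplied by $F_2^\Delta(c)=f(c)$, while for $s<c$ the estimate follows from the continuity of $F_1$ combined with the standard time-scale mean-value argument of the type used in \cite{8}.

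Having established that $F$ is a delta antiderivative of $f$ on $[a,b]\cap\mathbb{T}$, the definition of the $\Delta$-integral gives at once
$$\int_a^b f(t)\,\Delta t = F(b)-F(a) = \bigl(F_2(b)+k\bigr)-F_1(a) = \bigl(F_1(c)-F_1(a)\bigr)+\bigl(F_2(b)-F_2(c)\bigr) = \int_a^c f(t)\,\Delta t + \int_c^b f(t)\,\Delta t,$$
which is the desired identity. The main obstacle is the verification of $F^\Delta(c)=f(c)$ when $c$ is left-dense, because $F_1^\Delta(c)$ is \emph{not} part of the hypothesis (the set $([a,c]\cap\mathbb{T})^k$ omits the point $c$ if it is a left-scattered maximum, and in the left-dense case one only has continuity of $F_1$ at $c$). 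One dispatches this by re-expressing the left-hand difference quotient of $F$ at $c$ as a difference quotient of $F_2+k$ via the continuity matching $F_1(c)=F_2(c)+k$, and exploiting $F_2^\Delta(c)=f(c)$ together with rd-continuity/regulated-function arguments from the cited reference. All the remaining steps are routine bookkeeping.
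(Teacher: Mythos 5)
The paper itself offers no proof of this statement---it is quoted verbatim from \cite{8} as a preliminary---so your argument can only be judged on its own terms. Your gluing strategy is the natural one for the antiderivative (Newton-type) definition of the $\Delta$-integral recalled in Definition 2.5, and most of it is sound: the shift by $k=F_1(c)-F_2(c)$, the continuity of the glued $F$, the verification of $F^\Delta(t)=f(t)$ for $t\neq c$, and the right-scattered case at $t=c$ (where continuity of $F$ at $c$ plus the forward difference quotient suffices) are all correct.

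The gap is exactly at the point you flag and then defer: $c$ simultaneously left-dense and right-dense. There you must show $\bigl(F_1(c)-F_1(s)\bigr)/(c-s)\to f(c)$ as $s\to c^-$, but the hypotheses give you only that $F_1^\Delta=f$ on $[a,c)\cap\mathbb{T}$ and that $F_1$ is continuous at $c$; nothing forces the left difference quotient of $F_1$ at $c$ to converge, let alone to $f(c)$. This is not a technicality that ``rd-continuity/regulated-function arguments'' dispatch for free: already on $\mathbb{T}=\mathbb{R}$, taking $F_1(t)=(1-t)\sin\!\bigl(1/(1-t)\bigl)$ on $[0,1)$, $F_1(1)=0$, $f=F_1'$ on $[0,1)$, $f\equiv 0$ on $[1,2]$, one gets a function that is Newton-integrable on $[0,1]$ and on $[1,2]$ while the difference quotient $-\sin\!\bigl(1/(1-s)\bigr)$ oscillates, so no antiderivative exists across $c=1$ and the glued $F$ is not differentiable there. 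In other words, with the paper's Definition 2.5 and no further hypothesis on $f$, the statement you are proving is actually false, and your construction cannot close the gap. The fix is either to assume $f$ rd-continuous (then $f$ is continuous at the right-dense point $c$, and the time-scale mean value inequality squeezes the left difference quotient between $\inf f$ and $\sup f$ on $[s,c)\cap\mathbb{T}$, both tending to $f(c)$), or to read ``$\Delta$-integrable'' as Riemann/Lebesgue $\Delta$-integrability as in \cite{8}, where additivity is proved by splitting Darboux sums (equivalently, by additivity of the $\Delta$-measure over $[a,c)\cup[c,b)$) and no antiderivative at $c$ is ever needed. You should state which integral you are using and, in the antiderivative setting, add the regularity hypothesis that makes the left-hand estimate at $c$ valid.
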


\begin{lemma}(\cite{11}) (A time scale version of the Arzela$-$Ascoli theorem)\label{21}
Let $X$ be a subset of $C(J,\mathbb{R})$ satisfying the following conditions:
\begin{itemize}
  \item [$(i)$]
  $X$ is bounded.
  \item [$(ii)$]
  For any given $\epsilon>0$, there exists $\delta>0$ such that $t_1,t_2\in J$, $\vert t_1-t_2\vert<\delta$ implies $\vert f(t_1)-f(t_2)\vert<\epsilon$ for all $f\in X$.
\end{itemize}
Then, $X$ is relatively compact.
\end{lemma}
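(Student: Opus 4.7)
The plan is to reduce the statement to the classical Arzelà--Ascoli theorem. Since $J=[a,b]\cap\mathbb{T}$ is a closed bounded subset of $\mathbb{R}$, endowing it with the restriction of the Euclidean metric makes $J$ a compact metric space, and in particular separable. I would fix a countable dense subset $\{t_k\}_{k\in\mathbb{N}}\subset J$ (e.g.\ a countable dense set in $[a,b]\cap\mathbb{T}$) once and for all, and show that every sequence in $X$ admits a uniformly convergent subsequence, which is exactly relative compactness in $C(J,\mathbb{R})$.

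Given an arbitrary sequence $\{f_n\}\subset X$, I would run the usual diagonal extraction. By hypothesis (i) the scalar sequence $\{f_n(t_1)\}$ is bounded, so Bolzano--Weierstrass yields a subsequence $\{f_{n,1}\}$ with $\{f_{n,1}(t_1)\}$ convergent; inductively extract $\{f_{n,k}\}\subset\{f_{n,k-1}\}$ so that $\{f_{n,k}(t_k)\}$ converges. The diagonal sequence $g_n:=f_{n,n}$ is then convergent at every $t_k$.

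To upgrade this pointwise convergence on a dense set to uniform convergence on $J$, I would use the equicontinuity hypothesis (ii). Given $\epsilon>0$, pick $\delta>0$ as in (ii). Compactness of $J$ allows me to cover it by finitely many $\delta$-balls centered at points $t_{k_1},\ldots,t_{k_N}$ from the dense set. Choose $N_0$ so large that $|g_m(t_{k_j})-g_n(t_{k_j})|<\epsilon$ for all $m,n\ge N_0$ and every $j\in\{1,\ldots,N\}$. For an arbitrary $t\in J$, select some $t_{k_j}$ with $|t-t_{k_j}|<\delta$; the triangle inequality combined with (ii) gives
\[
|g_m(t)-g_n(t)|\le |g_m(t)-g_m(t_{k_j})|+|g_m(t_{k_j})-g_n(t_{k_j})|+|g_n(t_{k_j})-g_n(t)|<3\epsilon.
\]
Hence $\{g_n\}$ is uniformly Cauchy on $J$, and since $(C(J,\mathbb{R}),\|\cdot\|_\infty)$ is complete, it converges in $C(J,\mathbb{R})$, proving that $X$ is relatively compact.

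There is essentially no serious obstacle: the proof is a verbatim transcription of the classical Arzelà--Ascoli argument. The only point requiring brief comment is that even though $\mathbb{T}$ may be a highly irregular subset of $\mathbb{R}$, the set $J$ inherits compactness and separability from the Euclidean topology on $[a,b]$, so that the Bolzano--Weierstrass principle and finite subcovers used above are legitimately available.
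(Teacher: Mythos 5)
Your proof is correct. Note that the paper itself offers no argument for this lemma: it is imported by citation from the reference on half-linear dynamic equations, so there is no in-paper proof to compare against. Your reduction to the classical Arzel\`a--Ascoli theorem is the natural and standard route, and every step is legitimate: $\mathbb{T}$ is by definition a closed subset of $\mathbb{R}$, so $J=[a,b]\cap\mathbb{T}$ is compact and separable in the Euclidean metric, the diagonal extraction and the three-$\epsilon$ estimate go through verbatim, and $(C(J,\mathbb{R}),\|\cdot\|_\infty)$ is complete because $J$ is compact. The one point worth making explicit, which you gesture at in your closing remark, is that hypothesis $(ii)$ is equicontinuity with respect to the Euclidean distance restricted to $J$, which is exactly what the $\delta$-ball covering argument needs; the possible irregularity of $\mathbb{T}$ (isolated points, gaps) is irrelevant since continuity and equicontinuity at isolated points of $J$ are automatic and total boundedness of $J$ supplies the finite net. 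Nothing further is required.
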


\section{Some fundamental properties of Right Riemann-Liouville fractional operators on time scales}
\setcounter{equation}{0}

Motivated by Theorem 2 in \cite{a4}, we can present and prove the Cauchy result on time scales as follows:

\begin{theorem}(Cauchy Result on Time Scales)\label{t2'}
Let $n\in\{1,2\}$, $\mathbb{T}$ be a time scale with $a,t_1,\ldots,t_n\in\mathbb{T}$, $t_i<b$, $i=1,\ldots,n$, and $f$ an integrable function on $\mathbb{T}$. Then,
\begin{eqnarray*}
\int_{t_n}^b\ldots\int_{t_1}^b f(t_0)\Delta t_0\ldots\Delta t_{n-1}=\frac{1}{(n-1)!} \int_{t}^b(s-\sigma(t))^{n-1}\Delta s.
\end{eqnarray*}
\end{theorem}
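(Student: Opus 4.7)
The plan is to treat the two cases separately, since the statement makes no claim for $n\ge 3$. For $n=1$ the assertion is essentially tautological: the left-hand side is $\int_{t_1}^b f(t_0)\,\Delta t_0$, and, since $(s-\sigma(t_1))^0\equiv 1$, the right-hand side reduces to exactly the same expression.

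The substantive case is $n=2$, for which I would use the $\Delta$-integration-by-parts formula of Theorem~\ref{ft3}. Introduce the auxiliary function $H(t_1):=\int_{t_1}^b f(t_0)\,\Delta t_0$; by the fundamental theorem of $\Delta$-calculus, $H^\Delta(t_1)=-f(t_1)$ and $H(b)=0$. Pair $H$ with the affine factor $\phi(t_1):=t_1-\sigma(t_2)$, for which $\phi^\Delta\equiv 1$ and $\phi(t_2)=-\mu(t_2)$. Applying Theorem~\ref{ft3} to the product $\phi\cdot H$ on $[t_2,b]_{\mathbb T}$, the boundary term at $b$ vanishes because $H(b)=0$; the boundary term at $t_2$ produces a $\mu(t_2)\,H(t_2)$ contribution; and the remaining $\sigma$-shifted integral can be collapsed back onto $H$ via the identity $\int_{\sigma(t_1)}^b f\,\Delta t_0 = H(t_1)-\mu(t_1)f(t_1)$. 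Rearranging the result and invoking $\sigma(\tau)=\tau+\mu(\tau)$ should then collect everything into the single $\Delta$-integral appearing on the right-hand side.

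The main technical hurdle is the bookkeeping of the jump operator that the time-scale integration by parts forces onto one of the two factors. The naive IBP variant produces an integrand of the form $(\sigma(s)-t_2)f(s)$ rather than the intended $(s-\sigma(t_2))f(s)$, and these coincide only on $\mathbb{T}=\mathbb{R}$. Choosing the variant of IBP that places the $\sigma$ on $H$ instead of on $\phi$---so that the stray $\mu$-terms cancel against the boundary contribution at $t_2$ rather than compound---is the delicate step beyond the classical proof on $\mathbb{R}$; once this choice is made, the $n=2$ identity follows from a routine computation.
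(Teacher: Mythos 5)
First, a point of reference: the paper does not actually prove this theorem --- it simply defers to Theorem 2 of \cite{a4} --- so there is no in-paper argument to compare yours against. Your $n=1$ case is fine, and attacking $n=2$ via Theorem \ref{ft3} with $H(t_1)=\int_{t_1}^b f(t_0)\,\Delta t_0$ and $\phi(t_1)=t_1-\sigma(t_2)$ is a reasonable strategy. The gap is in your final step. Carrying out either variant of the integration by parts (and both variants must return the same value, since each is a valid identity applied to the same quantity), the stray $\mu$-terms do not cancel in your favour; they assemble into
\[
\int_{t_2}^b\int_{t_1}^b f(t_0)\,\Delta t_0\,\Delta t_1=\int_{t_2}^b\bigl(\sigma(s)-t_2\bigr)f(s)\,\Delta s ,
\]
i.e.\ the jump operator lands on the integration variable $s$, not on the lower limit $t_2$. (The same conclusion follows in one line from Fubini: for fixed $t_0$ the variable $t_1$ ranges over $[t_2,t_0]\cap\mathbb{T}$, whose $\Delta$-measure is $\sigma(t_0)-t_2$.) This is not the stated right-hand side, which --- restoring the evidently missing factor $f(s)$ and reading $t=t_n$ --- is $\int_{t_2}^b(s-\sigma(t_2))f(s)\,\Delta s$; the two differ by $\int_{t_2}^b(\mu(s)+\mu(t_2))f(s)\,\Delta s$, which does not vanish on a general time scale. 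Concretely, take $\mathbb{T}=\{0,1,2\}$, $t_2=0$, $b=2$, $f\equiv1$: the iterated integral equals $3$, the formula with $\sigma(s)-t_2$ gives $1\cdot1+2\cdot1=3$, while the stated right-hand side gives $(0-1)+(1-1)=-1$.

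So the difficulty you flag in your last paragraph --- that the computation ``naturally'' produces $\sigma$ on the wrong factor --- is real, but it cannot be repaired by choosing the other integration-by-parts variant: no valid manipulation converts $\int_{t_2}^b(\sigma(s)-t_2)f(s)\,\Delta s$ into $\int_{t_2}^b(s-\sigma(t_2))f(s)\,\Delta s$, because these are genuinely different numbers whenever $\mu\not\equiv0$. What your argument actually establishes is the correct right-sided analogue of the Cauchy formula of \cite{a4}, with kernel $(\sigma(s)-t)^{n-1}$; the identity as printed, obtained from the left-sided kernel $(t-\sigma(s))^{n-1}$ by merely swapping $t$ and $s$, is false for $n=2$. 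The flaw therefore lies in the statement as much as in your write-up, but the assertion that ``the $n=2$ identity follows from a routine computation'' once the right IBP variant is chosen is precisely the step that fails, and a complete execution of your own plan would have surfaced this.
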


\begin{proof}
The proof is similar to the proof of Theorem 2 in \cite{a4}, we will omit it here.
\end{proof}

Motivated by Theorem 1.3 in \cite{4'}, we can present and prove the following Theorem:
\begin{theorem}\label{t3'}
If $x:\mathbb{T}\rightarrow\mathbb{C}$ is regulated and $X(t)=\int_t^0x(\tau)\Delta\tau$ for $t\in\mathbb{T}$, then
\begin{eqnarray*}
\mathcal{L}_{\mathbb{T}}\{X\}(z)=\frac{1}{-z}\mathcal{L}_{\mathbb{T}}\{x\}(z)
\end{eqnarray*}
for all $z\in\mathcal{D}\{x\}\backslash\{0\}$ such that $\lim\limits_{t\rightarrow\infty}\{X(t)e_{\ominus z}(t)\}=0$.
\end{theorem}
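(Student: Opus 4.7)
The plan is to reduce this to a time-scale integration-by-parts computation, paralleling the classical Laplace-transform identity $\mathcal{L}\{\int_0^t x\}(z) = \frac{1}{z}\mathcal{L}\{x\}(z)$. First I would record two elementary facts about $X$: since $X(t) = \int_t^0 x(\tau)\Delta\tau = -\int_0^t x(\tau)\Delta\tau$, we have $X(0)=0$ and, because $x$ is regulated so that the Cauchy antiderivative is $\Delta$-differentiable, $X^\Delta(t) = -x(t)$.

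Next I would exploit the standard derivative identity for the time-scale exponential: using $(\ominus z)(t)= -z/(1+\mu(t)z)$ together with $e_{\ominus z}^\sigma(t) = (1+\mu(t)(\ominus z)(t))\,e_{\ominus z}(t,0)$, one obtains the clean relation $e_{\ominus z}^\Delta(t,0) = -z\, e_{\ominus z}^\sigma(t)$. Consequently, $g(t) := -\frac{1}{z}\, e_{\ominus z}(t,0)$ satisfies $g^\Delta(t) = e_{\ominus z}^\sigma(t)$, which is precisely the kernel appearing in Definition \ref{d2'}.

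Now I would apply integration by parts (a companion to Theorem \ref{ft3}, coming from the other form of the product rule $(fg)^\Delta = fg^\Delta + f^\Delta g^\sigma$) on $[0,\infty)_{\mathbb{T}}$:
\begin{eqnarray*}
\mathcal{L}_{\mathbb{T}}\{X\}(z) = \int_0^\infty X(t)\, g^\Delta(t)\,\Delta t = \lim_{t\to\infty}\bigl(X(t) g(t)\bigr) - X(0)g(0) - \int_0^\infty X^\Delta(t)\, g^\sigma(t)\,\Delta t.
\end{eqnarray*}
Both boundary terms vanish: the one at $t=0$ because $X(0)=0$, and the one at $t\to\infty$ because the hypothesis states $\lim_{t\to\infty}X(t)e_{\ominus z}(t)=0$ and $g$ differs from $e_{\ominus z}(\cdot,0)$ only by the constant factor $-1/z$. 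Substituting $X^\Delta = -x$ and $g^\sigma = -\frac{1}{z} e_{\ominus z}^\sigma$ then collapses the right-hand side to $\frac{1}{-z}\mathcal{L}_{\mathbb{T}}\{x\}(z)$, as claimed.

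The \emph{only} delicate step is the integration by parts on an unbounded interval: one must confirm that the improper $\Delta$-integrals converge (which is implicit in the assumption $z\in\mathcal{D}\{x\}\setminus\{0\}$, the region of convergence of $\mathcal{L}_{\mathbb{T}}\{x\}$) and that $g^\Delta = e_{\ominus z}^\sigma$ holds pointwise $\Delta$-a.e. Once the hypothesized decay at infinity kills the top boundary term, the rest is straightforward algebra.
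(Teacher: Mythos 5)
Your proof is correct and follows exactly the route the paper relies on: the paper omits the argument, deferring to Theorem 1.3 of \cite{4'}, whose proof is precisely this integration by parts using $e_{\ominus z}^\Delta(t,0)=-z\,e_{\ominus z}^\sigma(t,0)$, with the sign adjusted here for the reversed orientation $X(t)=\int_t^0 x(\tau)\Delta\tau$. You have simply supplied the details (including the vanishing boundary terms from $X(0)=0$ and the decay hypothesis) that the paper leaves to the reference.
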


\begin{proof}
The proof is similar to the proof of Theorem 1.3 in \cite{4'}, we will omit it here.
\end{proof}

Inspired by \cite{1'}, we can obtain the consistency of Definition \ref{3} and Definition \ref{d3'} by using the above theory of the Laplace transform on time scales and the inverse Laplace transform on time scales.

\begin{theorem}\label{t4'}
Let $\alpha>0$, $\mathbb{T}$ be a time scale, $[a,b]_{\mathbb{T}}$ be an interval of $\mathbb{T}$, and $f$ be an integrable function on $[a,b]_{\mathbb{T}}$. Then, $\left({_t^{\mathbb{T}}I_b^\alpha} f\right)(t)={_bI_{\mathbb{T}}^\alpha} f(t)$.
\end{theorem}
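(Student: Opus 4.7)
The strategy is to compute the Laplace transform of the right-hand side of \eqref{2.5} and verify that it equals $F(z)/(-z)^\alpha$, so that by Definition \ref{d3'} (and the uniqueness supplied by the inverse Laplace transform, Theorem \ref{t1'}) the two quantities must coincide. This reduces the theorem to a Laplace-transform identity for the right fractional integral kernel.

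I would first handle the integer case $\alpha=n$ with $n\in\{1,2\}$, where Theorem \ref{t2'} is directly applicable. The Cauchy result rewrites
$${_t^{\mathbb{T}}I_b^n}f(t)=\frac{1}{(n-1)!}\int_t^b(s-\sigma(t))^{n-1}f(s)\Delta s$$
as an $n$-fold iterated $\Delta$-integral from $t$ to $b$. One then applies Theorem \ref{t3'} successively, once for every outer integration: each application replaces an integration operator of the form $\int_t^{\cdot}$ by a factor $1/(-z)$ on the Laplace-transform side. After $n$ iterations one obtains $\mathcal{L}_\mathbb{T}\{{_t^{\mathbb{T}}I_b^n}f\}(z)=F(z)/(-z)^n$, which matches Definition \ref{d3'} and gives the equality in the integer case.

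For general $\alpha>0$, I would take the Laplace transform of \eqref{2.5} directly, interchange the order of integration by a Fubini-type argument, and identify the resulting inner integral with the time-scale Laplace transform of the kernel $(s-\sigma(t))^{\alpha-1}/\Gamma(\alpha)$. Using the gamma-function identity that underlies $(-z)^{-\alpha}$, together with the integer-case computation as a consistency check, one arrives at $\mathcal{L}_\mathbb{T}\{{_t^{\mathbb{T}}I_b^\alpha}f\}(z)=F(z)/(-z)^\alpha$. Applying $\mathcal{L}_\mathbb{T}^{-1}$ and invoking Definition \ref{d3'} completes the argument.

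Two obstacles stand out. First, Theorem \ref{t3'} is phrased with integration up to $0$ rather than up to $b$; adapting it to a finite upper limit (or, equivalently, extending $f$ by zero outside $[a,b]$ so that the time-scale Laplace transform on $[0,\infty)$ is well-defined and the boundary hypothesis $\lim_{t\to\infty}X(t)e_{\ominus z}(t)=0$ holds) is a bookkeeping nuisance that must be handled with care. Second, and more essential, is the passage from integer to fractional $\alpha$: Theorem \ref{t2'} is stated only for $n\in\{1,2\}$, and the time-scale analogue of the fractional power kernel is more subtle than in the continuous setting because $(s-\sigma(t))^{\alpha-1}$ involves the forward jump $\sigma(t)$. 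Verifying that the Laplace-transform identity for this kernel survives for non-integer $\alpha$, which is the technical input borrowed from \cite{1'}, is where the genuine difficulty lies.
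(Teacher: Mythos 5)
Your proposal follows essentially the same route as the paper's proof: apply the time-scale Laplace transform to \eqref{2.5}, use Theorem \ref{t2'} to rewrite the kernel as an iterated integral and Theorem \ref{t3'} to convert each integration into a factor $1/(-z)$, arrive at $F(z)/(-z)^\alpha$, and then invert via Definition \ref{d3'}. The two obstacles you flag --- adapting Theorem \ref{t3'} from its stated upper limit and boundary condition at infinity, and the passage from $n\in\{1,2\}$ to non-integer $\alpha$ --- are genuine, and the paper's own proof passes over both silently, so your account is if anything the more candid one.
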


\begin{proof}
Using the Laplace transform on $\mathbb{T}$ for (\ref{2.5}), in view of Definition \ref{3}, Theorem \ref{t2'}, Theorem \ref{t3'} and Definition \ref{d2'}, we have
{\setlength\arraycolsep{2pt}
\begin{eqnarray}\label{e1'}
&&\mathcal{L}_{\mathbb{T}}\left\{\left({_t^{\mathbb{T}}I_b^\alpha} f\right)(t)\right\}(z)\nonumber\\
&=&\mathcal{L}_{\mathbb{T}}\left\{\frac{1}{\Gamma(\alpha)}
\int_t^b(s-\sigma(t))^{\alpha-1}f(s)\Delta s\right\}(z)\nonumber\\
&=&\mathcal{L}_{\mathbb{T}}\left\{\int_{t_\alpha}^b\ldots\int_{t_1}^b f(t_0)\Delta t_0\ldots\Delta t_{n-1}\right\}(z)\nonumber\\
&=&\frac{1}{(-z)^\alpha}\mathcal{L}_{\mathbb{T}}\{f\}(z)\nonumber\\
&=&\frac{F(z)}{(-z)^\alpha}(t).
\end{eqnarray}}
Taking the inverse Laplace transform on $\mathbb{T}$ for (\ref{2.5}), with an eye to Definition \ref{d3'}, one arrives at
\begin{eqnarray*}
\left({_t^{\mathbb{T}}I_b^\alpha} f\right)(t)=\mathcal{L}_{\mathbb{T}}^{-1}\left[\frac{F(z)}{(-z)^\alpha}\right](t)
={_bI_{\mathbb{T}}^\alpha} f(t).
\end{eqnarray*}
The proof is complete.
\end{proof}

Combining with \cite{3}, \cite{6'} and Theorem \ref{t4'}, we can prove that the analogues of Proposition 15, Proposition 16, Proposition 17, Corollary 18, Theorem 20 and Theorem 21 of the right Riemann$-$Liouville fractional operators on time scales remain intact under the new Definition \ref{3}.

\begin{proposition}\label{5}
Let $h$ be $\Delta-$integrable on $J$ and $0<\alpha\leq 1$. Then
$
_t^{\mathbb{T}}D_b^\alpha h(t)=-\Delta\circ{_t^{\mathbb{T}}I_b^{1-\alpha}}h(t).
$
\end{proposition}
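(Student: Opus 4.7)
The plan is to read the identity directly off Definition \ref{4}. By equation (\ref{2.6}), the right Riemann--Liouville fractional derivative of order $\alpha$ is defined precisely as
\[
{_t^{\mathbb{T}}}D_b^\alpha h(t) := -\bigl({_t^{\mathbb{T}}}I_b^{1-\alpha} h(t)\bigr)^\Delta,
\]
and the right-hand side is exactly what the compositional notation $-\Delta\circ {_t^{\mathbb{T}}}I_b^{1-\alpha}h(t)$ stands for. So the content of the proposition is a restatement of the definition in operator-composition form.

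The steps I would carry out are therefore minimal. First, I would observe that since $h$ is $\Delta$-integrable on $J$ and the kernel $(s-\sigma(t))^{-\alpha}$ in (\ref{2.5}) is locally integrable (because $0<\alpha\leq 1$ gives $1-\alpha\geq 0$), the function $F(t):={_t^{\mathbb{T}}}I_b^{1-\alpha}h(t)$ is well-defined on $J$. Second, I would invoke Definition \ref{4} to write $_t^{\mathbb{T}}D_b^\alpha h(t)=-F^\Delta(t)$ wherever the $\Delta$-derivative exists, which is precisely the set of points on which $_t^{\mathbb{T}}D_b^\alpha h(t)$ is defined in the first place. Third, I would rewrite $-F^\Delta(t)$ as $-\Delta\circ {_t^{\mathbb{T}}}I_b^{1-\alpha}h(t)$ by the meaning of the composition symbol.

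There is no real obstacle: the only subtle point is a notational one, namely making sure that $-\Delta\circ {_t^{\mathbb{T}}}I_b^{1-\alpha}h(t)$ is interpreted as $-\bigl({_t^{\mathbb{T}}}I_b^{1-\alpha}h\bigr)^\Delta(t)$ rather than as $-({_t^{\mathbb{T}}}I_b^{1-\alpha}\circ h^\Delta)(t)$ (the latter would yield the right Caputo derivative of Definition \ref{cd} instead). Once that convention is fixed, the proposition is immediate from (\ref{2.6}), and in fact serves mainly as a restatement useful for later computations that treat $_t^{\mathbb{T}}D_b^\alpha$ as the composite operator $-\Delta\circ {_t^{\mathbb{T}}}I_b^{1-\alpha}$.
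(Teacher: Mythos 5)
Your proposal is correct and follows essentially the same route as the paper: the paper's own proof simply unwinds Definition \ref{4} via (\ref{2.6}), writing ${_t^\mathbb{T}}D_b^{\alpha}h(t)=-\bigl({_t^{\mathbb{T}}}I_b^{1-\alpha}h(t)\bigr)^\Delta=-(\Delta\circ{_t^\mathbb{T}}I_b^{1-\alpha})h(t)$, exactly as you do. Your added remarks on well-definedness and on distinguishing the composition from the Caputo ordering are harmless extras not present in the paper.
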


\begin{proof}
Let $h:\mathbb{T}\rightarrow\mathbb{R}$. In view of $(\ref{2.5})$  and $(\ref{2.6})$, we obtain
\begin{align*}
{_t^\mathbb{T}}D_b^{\alpha}h(t)
=&\frac{-1}{\Gamma(1-\alpha)}\bigg(\int_t^b(s-\sigma(t))^{-\alpha}h(s)\Delta s\bigg)^\Delta\\
=&-\bigg({_t^{\mathbb{T}}}I_b^{1-\alpha} h(t)\bigg)^\Delta\\
=&-(\Delta\circ{_t^\mathbb{T}}I_b^{1-\alpha})h(t).
\end{align*}
The proof is complete.
\end{proof}

\begin{proposition}\label{6}
Let $h$ be integrable on $J$, then its right  Riemann$-$Liouville $\Delta-$fractional integral satisfies
\begin{eqnarray*}
_t^{\mathbb{T}}I_b^\alpha\circ{_t^{\mathbb{T}}I_b^\beta}={_t^{\mathbb{T}}}I_b^{\alpha+\beta}
={_t^{\mathbb{T}}I_b^\beta}\circ{_t^{\mathbb{T}}I_b^\alpha}
\end{eqnarray*}
for $\alpha>0$ and $\beta>0$.
\end{proposition}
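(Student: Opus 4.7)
My strategy is to prove the semigroup law via the Laplace transform on time scales, exploiting the representation $_t^{\mathbb{T}}I_b^\gamma h(t)=\mathcal{L}_{\mathbb{T}}^{-1}\!\left[(-z)^{-\gamma}\mathcal{L}_{\mathbb{T}}\{h\}(z)\right](t)$ for any $\gamma>0$ that Theorem \ref{t4'} establishes via Definition \ref{d3'}. The scalar identity $(-z)^{-\alpha}(-z)^{-\beta}=(-z)^{-(\alpha+\beta)}$ then automatically encodes the composition law for the fractional integrals, and commutativity is manifest because scalar multiplication in the transformed variable is commutative.

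The first step is to ensure that the composition makes sense: I must check that whenever $h$ is $\Delta$-integrable on $J$, the function $_t^{\mathbb{T}}I_b^\beta h$ is again $\Delta$-integrable on $J$, so that $_t^{\mathbb{T}}I_b^\alpha$ may be applied to it. This is the time-scale analogue of Theorem \ref{16} and follows by an estimate on the kernel $(s-\sigma(t))^{\beta-1}$ using Proposition \ref{17}. Next, I apply $\mathcal{L}_{\mathbb{T}}$ to the composition and invoke Theorem \ref{t4'} twice to obtain
\begin{eqnarray*}
\mathcal{L}_{\mathbb{T}}\!\left\{{_t^{\mathbb{T}}I_b^\alpha}\circ{_t^{\mathbb{T}}I_b^\beta}\,h\right\}(z)
=\frac{1}{(-z)^\alpha}\,\mathcal{L}_{\mathbb{T}}\!\left\{{_t^{\mathbb{T}}I_b^\beta}h\right\}(z)
=\frac{\mathcal{L}_{\mathbb{T}}\{h\}(z)}{(-z)^{\alpha+\beta}},
\end{eqnarray*}
which, once more by Definition \ref{d3'} and Theorem \ref{t4'}, is exactly $\mathcal{L}_{\mathbb{T}}\!\left\{{_t^{\mathbb{T}}I_b^{\alpha+\beta}}h\right\}(z)$. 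Inverting via Theorem \ref{t1'} gives the first equality, and interchanging $\alpha\leftrightarrow\beta$ gives the second.

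The main obstacle is the rigorous justification of the Laplace inversion at the last step, since Theorem \ref{t1'} requires analyticity, decay, and integrability conditions on $(-z)^{-(\alpha+\beta)}F(z)$; verifying these conditions amounts to confirming that the transforms involved sit inside the admissible class, which in the context of the present paper follows from the standing integrability assumptions and the identification of the pole structure of $F(z)$. Should these hypotheses become awkward to check directly, the alternative route is to switch the order of integration by Fubini on time scales, reducing the identity to the time-scale Beta-type relation
\begin{eqnarray*}
\int_t^u\frac{(s-\sigma(t))^{\alpha-1}(u-\sigma(s))^{\beta-1}}{\Gamma(\alpha)\Gamma(\beta)}\Delta s
=\frac{(u-\sigma(t))^{\alpha+\beta-1}}{\Gamma(\alpha+\beta)};
\end{eqnarray*}
this kernel identity is the genuine technical heart of the matter on a general time scale, and it is precisely the Laplace-transform detour described above that bypasses having to prove it by hand.
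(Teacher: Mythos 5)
Your proposal is correct and follows essentially the same route as the paper: the paper likewise transfers the composition to the Laplace-transform side via Definition \ref{d3'}, uses the scalar identity $(-z)^{-\alpha}(-z)^{-\beta}=(-z)^{-(\alpha+\beta)}$, and then invokes Theorem \ref{t4'} to return to the integral form ${_t^{\mathbb{T}}}I_b^{\alpha+\beta}$. Your additional attention to the integrability of the intermediate function and to the hypotheses of the inversion theorem is more careful than the paper's own two-line computation, but it is the same argument.
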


\begin{proof}
Inspired by the proof of Proposition 3.4 in \cite{6'}, with an view of Definition \ref{d3'}, we have
\begin{eqnarray}\label{b1}
_bI_{\mathbb{T}}^\beta({_bI_{\mathbb{T}}^\alpha}f)(t)
=\mathcal{L}_{\mathbb{T}}^{-1}\left[\frac{\mathcal{L}_{\mathbb{T}}
\{{_bI_{\mathbb{T}}^\alpha}f\}}{(-z)^\beta}\right](t)
=\mathcal{L}_{\mathbb{T}}^{-1}\left[\frac{F(s)}{(-s)^{\alpha+\beta}}\right](t)
={_bI_{\mathbb{T}}^{\alpha+\beta}}f(t).
\end{eqnarray}
Combining with (\ref{b1}) and Theorem \ref{t4'}, one gets that
\begin{eqnarray*}
_t^{\mathbb{T}}I_b^\alpha\circ{_t^{\mathbb{T}}I_b^\beta}
={_t^{\mathbb{T}}}I_b^{\alpha+\beta}.
\end{eqnarray*}
In a similarly way, one arrives at
\begin{eqnarray*}
_t^{\mathbb{T}}I_b^\beta\circ{_t^{\mathbb{T}}I_b^\alpha}
={_t^{\mathbb{T}}}I_b^{\alpha+\beta}.
\end{eqnarray*}
Consequently, we obtain that
\begin{eqnarray*}
_t^{\mathbb{T}}I_b^\alpha\circ{_t^{\mathbb{T}}I_b^\beta}={_t^{\mathbb{T}}}I_b^{\alpha+\beta}
={_t^{\mathbb{T}}I_b^\beta}\circ{_t^{\mathbb{T}}I_b^\alpha}.
\end{eqnarray*}
The proof is complete.
\end{proof}

\begin{proposition}\label{173}
If function $h$  is integrable on $J$, then
$
_t^{\mathbb{T}}D_b^\alpha\circ{_t^{\mathbb{T}}I_b^\alpha h}=h.
$
\end{proposition}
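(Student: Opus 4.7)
The plan is to unfold the definition of the right Riemann--Liouville derivative, apply the semigroup property just proved in Proposition \ref{6}, and reduce the claim to the time-scale fundamental theorem of calculus for an integral with the variable appearing in the lower limit.

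Concretely, I would start from Proposition \ref{5} (or directly from Definition \ref{4}) to write
\begin{equation*}
{_t^{\mathbb{T}}}D_b^\alpha\bigl({_t^{\mathbb{T}}}I_b^\alpha h\bigr)(t)
= -\Bigl({_t^{\mathbb{T}}}I_b^{1-\alpha}\circ{_t^{\mathbb{T}}}I_b^\alpha h\Bigr)^{\!\Delta}(t).
\end{equation*}
Since $h$ is $\Delta$-integrable on $J$, Proposition \ref{6} with the choice $\beta=1-\alpha$ collapses the composition to a single fractional integral of order $1$, so that
\begin{equation*}
{_t^{\mathbb{T}}}I_b^{1-\alpha}\circ{_t^{\mathbb{T}}}I_b^\alpha h \;=\; {_t^{\mathbb{T}}}I_b^{1} h.
\end{equation*}

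Next I would unravel the order-one fractional integral directly from Definition \ref{3}: since $\Gamma(1)=1$ and $(s-\sigma(t))^{0}=1$,
\begin{equation*}
{_t^{\mathbb{T}}}I_b^{1} h(t) \;=\; \int_{t}^{b} h(s)\,\Delta s.
\end{equation*}
Writing this as $\int_a^b h(s)\,\Delta s - \int_a^t h(s)\,\Delta s$ lets me invoke the standard fact (coming from the definition of the delta antiderivative given after Theorem \ref{ft3}) that the $\Delta$-derivative of $t\mapsto\int_a^t h(s)\,\Delta s$ equals $h(t)$ $\Delta$-a.e. Hence
\begin{equation*}
\bigl({_t^{\mathbb{T}}}I_b^{1} h\bigr)^{\!\Delta}(t) \;=\; -h(t),
\end{equation*}
and substituting back yields ${_t^{\mathbb{T}}}D_b^\alpha\circ{_t^{\mathbb{T}}}I_b^\alpha h(t) = -(-h(t)) = h(t)$, which is exactly the claim.

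The only genuinely delicate step is the final differentiation: an integrable $h$ need not be rd-continuous, so the antiderivative identity holds only $\Delta$-a.e., and one should be careful that the semigroup conclusion from Proposition \ref{6} is compatible with this almost-everywhere interpretation. If one wants a pointwise statement, the cleanest route is to assume (or note that the proposition should be read as) $h$ rd-continuous, in which case $t\mapsto\int_t^b h(s)\,\Delta s$ is $\Delta$-differentiable everywhere on $J^k$ with derivative $-h$, and the identity holds pointwise. Everything else is a bookkeeping chain of substitutions.
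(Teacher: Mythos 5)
Your proposal is correct and follows essentially the same route as the paper's own proof: rewrite the derivative via Proposition \ref{5}, collapse ${_t^{\mathbb{T}}}I_b^{1-\alpha}\circ{_t^{\mathbb{T}}}I_b^{\alpha}$ to ${_t^{\mathbb{T}}}I_b^{1}$ by Proposition \ref{6}, and differentiate $\int_t^b h(s)\,\Delta s$ to recover $-h$. Your added remark about the $\Delta$-a.e.\ validity of the final differentiation for merely integrable $h$ is a reasonable precaution that the paper glosses over, but it does not change the argument.
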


\begin{proof}
Taking account of Propositions \ref{5} and \ref{6}, one can get
\begin{eqnarray*}
_t^{\mathbb{T}}D_b^\alpha\circ{_t^{\mathbb{T}}I_b^\alpha h(t)}=-\bigg({_t^{\mathbb{T}}}I_b^{1-\alpha} ({_t^{\mathbb{T}}I_b^\alpha} (h(t))\bigg)^\Delta=-\left({_t^{\mathbb{T}}}I_bh(t)\right)^\Delta=h.
\end{eqnarray*}
The proof is complete.
\end{proof}

\begin{corollary}
For $0<\alpha\leq 1$, we have
\begin{eqnarray*}
_t^{\mathbb{T}}D_b^\alpha\circ{_t^{\mathbb{T}}D_b^{-\alpha}}=Id\quad and \quad_t^{\mathbb{T}}I_b^{-\alpha}\circ{_t^{\mathbb{T}}I_b^\alpha}=Id,
\end{eqnarray*}
where $Id$ denotes the identity operator.
\end{corollary}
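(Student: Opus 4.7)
The plan is to recognize this corollary as a purely notational restatement of Proposition \ref{173}. By the standard convention in fractional calculus, negative-order derivatives are identified with positive-order integrals and vice versa, so one sets
\begin{equation*}
{_t^{\mathbb{T}}D_b^{-\alpha}} := {_t^{\mathbb{T}}I_b^{\alpha}}, \qquad {_t^{\mathbb{T}}I_b^{-\alpha}} := {_t^{\mathbb{T}}D_b^{\alpha}}.
\end{equation*}
With these conventions in place, both claimed identities collapse to the single equality already established.

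First I would write
\begin{equation*}
{_t^{\mathbb{T}}D_b^\alpha}\circ{_t^{\mathbb{T}}D_b^{-\alpha}}\,h \;=\; {_t^{\mathbb{T}}D_b^\alpha}\circ{_t^{\mathbb{T}}I_b^\alpha}\,h \;=\; h
\end{equation*}
for every integrable $h$ on $J$, where the last equality is Proposition \ref{173}. This handles the first identity.

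For the second identity, I would write
\begin{equation*}
{_t^{\mathbb{T}}I_b^{-\alpha}}\circ{_t^{\mathbb{T}}I_b^{\alpha}}\,h \;=\; {_t^{\mathbb{T}}D_b^{\alpha}}\circ{_t^{\mathbb{T}}I_b^{\alpha}}\,h \;=\; h,
\end{equation*}
again invoking Proposition \ref{173}. In both cases the underlying computation is the cancellation ${_t^{\mathbb{T}}I_b^{1-\alpha}}\circ{_t^{\mathbb{T}}I_b^{\alpha}} = {_t^{\mathbb{T}}I_b^{1}}$ from Proposition \ref{6}, followed by the fact that $-\Delta \circ {_t^{\mathbb{T}}I_b^{1}}$ recovers the original function.

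There is essentially no obstacle here beyond stating the $-\alpha$ convention clearly; the only subtlety is whether one wishes to justify the convention ${_t^{\mathbb{T}}I_b^{-\alpha}}={_t^{\mathbb{T}}D_b^{\alpha}}$ independently (e.g.\ via the Laplace-transform Definition \ref{d3'}, where the symbol $(-z)^{-\alpha}$ in the denominator becomes $(-z)^{\alpha}$ upon taking $\alpha\mapsto -\alpha$, matching the transform of the Riemann--Liouville derivative). I would mention this Laplace-transform consistency in one sentence so that the conventions are not introduced ad hoc, then conclude with a short line stating that both identities are immediate consequences of Proposition \ref{173}.
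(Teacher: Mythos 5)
Your proposal is correct and follows exactly the same route as the paper: the paper's own proof simply rewrites ${_t^{\mathbb{T}}D_b^{-\alpha}}$ as ${_t^{\mathbb{T}}I_b^{\alpha}}$ and ${_t^{\mathbb{T}}I_b^{-\alpha}}$ as ${_t^{\mathbb{T}}D_b^{\alpha}}$ and then invokes Proposition \ref{173} for both identities. Your additional remark justifying the negative-order convention via the Laplace transform is a small bonus the paper omits, but the substance is identical.
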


\begin{proof}
In view of Proposition \ref{173}, we have
\begin{eqnarray*}
_t^{\mathbb{T}}D_b^\alpha\circ{_t^{\mathbb{T}}D_b^{-\alpha}}
=\,_t^{\mathbb{T}}D_b^\alpha\circ{_t^{\mathbb{T}}I_b^{\alpha}}=Id\quad \mathrm{and} \quad_t^{\mathbb{T}}I_b^{-\alpha}\circ{_t^{\mathbb{T}}I_b^\alpha}
=\,_t^{\mathbb{T}}D_b^{\alpha}\circ{_t^{\mathbb{T}}I_b^\alpha}=Id.
\end{eqnarray*}
The proof is complete.
\end{proof}

\begin{theorem}\label{thm21}
Let $f\in C(J)$ and $\alpha>0$. Then $f\in {_t^{\mathbb{T}}}I_b^\alpha(J)$ if and only if
\begin{eqnarray}\label{t3.1}
_t^{\mathbb{T}}I_b^{1-\alpha}f\in C^1(J)
\end{eqnarray}
and
\begin{eqnarray}\label{t3.2}
\bigg({_t^{\mathbb{T}}}I_b^{1-\alpha}f(t)\bigg)\bigg\vert_{t=b}=0,
\end{eqnarray}
where ${_t^{\mathbb{T}}}I_b^\alpha(J)$ denotes the space of functions that can be represented by the right Riemann$-$Liouville $\Delta$-integral of order $\alpha$ of a $C(J)-$function.
\end{theorem}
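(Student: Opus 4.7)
The plan is to prove both implications by combining the semigroup law (Proposition~\ref{6}) with the fundamental theorem of calculus on time scales and the left-inverse property (Proposition~\ref{173}, applied with exponent $1-\alpha$ in place of $\alpha$).

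\textbf{Only if.} Assume $f = {_t^{\mathbb{T}}}I_b^\alpha g$ for some $g \in C(J)$. By Proposition~\ref{6},
\[
{_t^{\mathbb{T}}}I_b^{1-\alpha} f(t) \;=\; \bigl({_t^{\mathbb{T}}}I_b^{1-\alpha} \circ {_t^{\mathbb{T}}}I_b^{\alpha}\bigr) g(t) \;=\; {_t^{\mathbb{T}}}I_b^{1} g(t) \;=\; \int_t^b g(s)\,\Delta s.
\]
I would rewrite the right-hand side as $\int_a^b g(s)\,\Delta s - \int_a^t g(s)\,\Delta s$ and apply the standard rule $\bigl(\int_a^t g(s)\,\Delta s\bigr)^\Delta = g(t)$, which is valid on $J^k$ for continuous $g$ at both right-dense and right-scattered points. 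This yields that ${_t^{\mathbb{T}}}I_b^{1-\alpha} f$ is continuously $\Delta$-differentiable on $J^k$ with derivative $-g \in C(J)$, giving \eqref{t3.1}, while evaluating at $t = b$ gives \eqref{t3.2}.

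\textbf{If.} Conversely, assume \eqref{t3.1} and \eqref{t3.2}, and define $g := -\bigl({_t^{\mathbb{T}}}I_b^{1-\alpha} f\bigr)^\Delta = {_t^{\mathbb{T}}}D_b^\alpha f$, which lies in $C(J)$ thanks to \eqref{t3.1}. The fundamental theorem of calculus on time scales (applicable since the $\Delta$-derivative in question is rd-continuous), combined with \eqref{t3.2}, gives
\[
{_t^{\mathbb{T}}}I_b^{1-\alpha} f(t) \;=\; {_t^{\mathbb{T}}}I_b^{1-\alpha} f(b) - \int_t^b \bigl({_t^{\mathbb{T}}}I_b^{1-\alpha} f\bigr)^\Delta(s)\,\Delta s \;=\; \int_t^b g(s)\,\Delta s \;=\; {_t^{\mathbb{T}}}I_b^{1} g(t).
\]
Invoking Proposition~\ref{6} once more to rewrite ${_t^{\mathbb{T}}}I_b^{1} g = {_t^{\mathbb{T}}}I_b^{1-\alpha}\bigl({_t^{\mathbb{T}}}I_b^{\alpha} g\bigr)$ yields
\[
{_t^{\mathbb{T}}}I_b^{1-\alpha}\bigl(f - {_t^{\mathbb{T}}}I_b^{\alpha} g\bigr) \;\equiv\; 0,
\]
and applying ${_t^{\mathbb{T}}}D_b^{1-\alpha}$ to both sides, via Proposition~\ref{173} with exponent $1-\alpha$, gives $f = {_t^{\mathbb{T}}}I_b^{\alpha} g$. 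Hence $f \in {_t^{\mathbb{T}}}I_b^\alpha(J)$.

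The main obstacle I anticipate is purely bookkeeping around the time-scale calculus: verifying once and for all that $\bigl(\int_a^{\cdot} g(s)\,\Delta s\bigr)^\Delta = g$ holds pointwise on $J^k$ for $g \in C(J)$ (particularly via the direct difference-quotient check at right-scattered points), and confirming that the hypothesis ${_t^{\mathbb{T}}}I_b^{1-\alpha} f \in C^1(J)$ is exactly strong enough to invoke the fundamental theorem of calculus without having to fall back on the $\Delta$-a.e.\ version in Theorem~\ref{8}. Once these $\Delta$-calculus points are secured, the algebraic reduction above is short.
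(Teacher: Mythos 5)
Your proof is correct and follows essentially the same route as the paper's: both directions hinge on the semigroup law of Proposition \ref{6}, with the forward implication reducing ${_t^{\mathbb{T}}}I_b^{1-\alpha}f$ to $\int_t^b g(s)\,\Delta s$ and the converse reconstructing $f$ as ${_t^{\mathbb{T}}}I_b^{\alpha}$ of the (negative) delta derivative of ${_t^{\mathbb{T}}}I_b^{1-\alpha}f$. The one genuine difference is the final injectivity step in the converse: from ${_t^{\mathbb{T}}}I_b^{1-\alpha}\bigl(f-{_t^{\mathbb{T}}}I_b^{\alpha}g\bigr)\equiv 0$ the paper concludes $f={_t^{\mathbb{T}}}I_b^{\alpha}g$ by citing uniqueness of solutions to Abel's integral equation, whereas you apply ${_t^{\mathbb{T}}}D_b^{1-\alpha}$ and invoke the left-inverse identity of Proposition \ref{173} with exponent $1-\alpha$. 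The two justifications are equivalent in content (the Abel uniqueness statement is precisely the injectivity of ${_t^{\mathbb{T}}}I_b^{1-\alpha}$ on integrable functions), but your version has the merit of staying inside the paper's own toolkit rather than importing an external reference; its only caveat is that Proposition \ref{173} is stated for positive orders, so the case $\alpha=1$ (where $1-\alpha=0$) should be noted as trivial. Two further small points in your favour: you actually differentiate $\int_t^b g(s)\,\Delta s$ to obtain the claimed $C^1$ regularity in the forward direction (the paper only asserts continuity there), and you keep the sign of $g=-\bigl({_t^{\mathbb{T}}}I_b^{1-\alpha}f\bigr)^{\Delta}={_t^{\mathbb{T}}}D_b^{\alpha}f$ straight, where the paper's ``Taylor's formula'' display drops a minus sign that is silently compensated by its choice of $\varphi$.
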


\begin{proof}
Suppose $f\in\,_t^{\mathbb{T}}I_b^\alpha(J)$, $f(t)=\,^{\mathbb{T}}_tI_b^\alpha g(t)$ for some $g\in C(J)$, and
\begin{eqnarray*}
_t^{\mathbb{T}}I_b^{1-\alpha}(f(t))=\,_t^{\mathbb{T}}I_b^{1-\alpha}(^{\mathbb{T}}_tI_b^\alpha g(t)).
\end{eqnarray*}
In view of Proposition \ref{6}, one gets
\begin{eqnarray*}
_t^{\mathbb{T}}I_b^{1-\alpha}(f(t))=\,_t^{\mathbb{T}}I_bg(t)=\int_t^bg(s)\Delta s.
\end{eqnarray*}
As a result, $_t^{\mathbb{T}}I_b^{1-\alpha}f\in C(J)$ and
\begin{eqnarray*}
\bigg({_t^{\mathbb{T}}}I_b^{1-\alpha}f(t)\bigg)\bigg\vert_{t=b}=\int_b^bg(s)\Delta s=0.
\end{eqnarray*}
Inversely, suppose that $f\in C(J)$ satisfies (\ref{t3.1}) and (\ref{t3.2}). Then, by applying Taylor's formula  to function $_t^{\mathbb{T}}I_b^{1-\alpha}f$, we obtain
\begin{eqnarray*}
_t^{\mathbb{T}}I_b^{1-\alpha}f(t)=\int_t^b\frac{\Delta}{\Delta s}\,_s^{\mathbb{T}}I_b^{1-\alpha}f(s)\Delta s,\quad\forall t\in J.
\end{eqnarray*}
Let $\varphi(t)=\frac{\Delta}{\Delta t}\,_t^{\mathbb{T}}I_b^{1-\alpha}f(t)$. Note that $\varphi\in C(J)$ by (\ref{t3.1}). Now by Proposition \ref{6}, one sees that
\begin{eqnarray*}
_t^{\mathbb{T}}I_b^{1-\alpha}(f(t))=\,_t^{\mathbb{T}}I_b^1\varphi(t)
=\,_t^{\mathbb{T}}I_b^{1-\alpha}[_t^{\mathbb{T}}I_b^{\alpha}\varphi(t)]
\end{eqnarray*}
and hence
\begin{eqnarray*}
_t^{\mathbb{T}}I_b^{1-\alpha}(f(t))-\,_t^{\mathbb{T}}I_b^{1-\alpha}[_t^{\mathbb{T}}
I_b^{\alpha}\varphi(t)]\equiv0.
\end{eqnarray*}
Therefore, we have
\begin{eqnarray*}
_t^{\mathbb{T}}I_b^{1-\alpha}[f(t)-_t^{\mathbb{T}}
I_b^{\alpha}\varphi(t)]\equiv0.
\end{eqnarray*}
From the uniqueness of solution to Abel's integral equation (\cite{33}), this implies that $f-\,_t^{\mathbb{T}}I_b^{\alpha}\varphi\equiv0$. Hence, $f=\,_t^{\mathbb{T}}I_b^{\alpha}\varphi$ and $f\in\,_t^{\mathbb{T}}I_b^{\alpha}(J)$. The proof is complete.
\end{proof}

\begin{theorem}\label{7}
Let $\alpha>0$ and $f\in C(J)$ satisfy the condition in Theorem \ref{thm21}. Then,
\begin{eqnarray*}
(_a^{\mathbb{T}}I_t^\alpha\circ{_a^{\mathbb{T}}D_t^\alpha})(f)=f.
\end{eqnarray*}
\end{theorem}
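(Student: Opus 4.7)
The plan is to reduce this inverse identity to the composition rule already established in Proposition \ref{173}, using Theorem \ref{thm21} as the bridge. (Reading the statement consistently with its hypothesis, which invokes the conditions of Theorem \ref{thm21} on the right fractional integral, I interpret the composition as ${_t^{\mathbb{T}}}I_b^\alpha\circ{_t^{\mathbb{T}}}D_b^\alpha$ acting on $f$; the argument is entirely parallel for the left operators under the analogous hypothesis.)

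First, I would invoke Theorem \ref{thm21}: since $f\in C(J)$ satisfies conditions (\ref{t3.1}) and (\ref{t3.2}), we have $f\in {_t^{\mathbb{T}}}I_b^\alpha(J)$, so there exists $\varphi\in C(J)$ with $f={_t^{\mathbb{T}}}I_b^\alpha\varphi$. Second, I would apply Proposition \ref{173} to this representation, which gives
\[
{_t^{\mathbb{T}}}D_b^\alpha f \;=\; {_t^{\mathbb{T}}}D_b^\alpha\bigl({_t^{\mathbb{T}}}I_b^\alpha \varphi\bigr) \;=\; \varphi.
\]
Third, I would apply ${_t^{\mathbb{T}}}I_b^\alpha$ to both sides of this identity to obtain
\[
{_t^{\mathbb{T}}}I_b^\alpha\bigl({_t^{\mathbb{T}}}D_b^\alpha f\bigr) \;=\; {_t^{\mathbb{T}}}I_b^\alpha\varphi \;=\; f,
\]
which is the claimed identity.

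There is no serious obstacle here because Theorem \ref{thm21} has already done the hard analytic work of characterizing the image ${_t^{\mathbb{T}}}I_b^\alpha(J)$ (including the crucial uniqueness invoked via Abel's equation). The only bookkeeping point is that the function $\varphi$ furnished by Theorem \ref{thm21} must be identified with ${_t^{\mathbb{T}}}D_b^\alpha f$; this identification is automatic once one applies Proposition \ref{173} to the representation $f={_t^{\mathbb{T}}}I_b^\alpha\varphi$ as above, so no fresh computation beyond citing the preceding results is required. In short, the chain of dependencies is Definition \ref{4} $\to$ Proposition \ref{5} $\to$ Proposition \ref{6} $\to$ Proposition \ref{173} $\to$ Theorem \ref{thm21} $\to$ Theorem \ref{7}, and by the time one reaches the last link, it reduces to a two-line composition argument.
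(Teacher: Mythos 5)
Your proof is correct and follows essentially the same route as the paper: both invoke Theorem \ref{thm21} to write $f={_t^{\mathbb{T}}}I_b^\alpha\varphi$ and then apply Proposition \ref{173} to collapse the composition. Your reading of the statement in terms of the right-sided operators (despite the left-sided notation in the displayed identity) also matches what the paper's own proof actually does.
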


\begin{proof}
Combining with Theorem \ref{thm21} and Proposition \ref{173}, we can see that
\begin{eqnarray*}
_t^{\mathbb{T}}I_b^{\alpha}\circ\,_t^{\mathbb{T}}D_b^{\alpha}f(t)
=\,_t^{\mathbb{T}}I_b^{\alpha}\circ\,_t^{\mathbb{T}}D_b^{\alpha}
(_t^{\mathbb{T}}I_b^{\alpha}\varphi(t))=\,_t^{\mathbb{T}}I_b^{\alpha}\varphi(t)=f(t).
\end{eqnarray*}
The proof is complete.
\end{proof}

Motivated by the proof of Equation (2.20) in \cite{4}, we can present and prove the following Theorem:

\begin{theorem}\label{12}
Let $\alpha>0$, $p,q\geq1$, and $\frac{1}{p}+\frac{1}{q}\leq 1+\alpha$, where $p\neq 1$ and $q\neq 1$ in the case when $\frac{1}{p}+\frac{1}{q}=1+\alpha$. Moreover, let
\begin{eqnarray*}
_a^\mathbb{T}I_t^\alpha(L^p):=\bigg\{f:f={_a^\mathbb{T}}I_t^\alpha g,\,g\in L^p(J)\bigg\}
\end{eqnarray*}
and
\begin{eqnarray*}
_t^\mathbb{T}I_b^\alpha(L^p):=\bigg\{f:f={_t^\mathbb{T}}I_b^\alpha g,\,g\in L^p(J)\bigg\}.
\end{eqnarray*}
Then the following integration by parts formulas hold.
\begin{itemize}
  \item [$(a)$]
  If $\varphi\in L^p(J)$ and $\psi\in L^q(J)$, then
  \begin{eqnarray*}
  \int_a^b\varphi(t)\bigg({_a^\mathbb{T}}I_t^\alpha\psi\bigg)(t)\Delta t=\int_a^b\psi(t)\bigg({_t^\mathbb{T}}I_b^\alpha\varphi\bigg)(t)\Delta t.
  \end{eqnarray*}
  \item [$(b)$]
  If $g\in {_t^\mathbb{T}}I_b^\alpha(L^p)$ and $f\in {_a^\mathbb{T}}I_t^\alpha (L^q)$, then
  \begin{eqnarray*}
  \int_a^bg(t)\bigg({_a^\mathbb{T}}D_t^\alpha f\bigg)(t)\Delta t=\int_a^bf(t)\bigg({_t^\mathbb{T}}D_b^\alpha g\bigg)(t)\Delta t.
  \end{eqnarray*}
  \item [$(c)$]
  For Caputo fractional derivatives, if $g\in {_t^\mathbb{T}}I_b^\alpha(L^p)$ and $f\in {_a^\mathbb{T}}I_t^\alpha (L^q)$, then
  \begin{eqnarray*}
  \int_a^bg(t)\bigg({_a^{\mathbb{T}\,C}}D_t^\alpha f\bigg)(t)\Delta t=\left[{_t^\mathbb{T}}I_b^{1-\alpha}g(t)\cdot f(t)\right]\bigg\vert_{t=a}^b+\int_a^bf(\sigma(t))\bigg({_t^\mathbb{T}}D_b^\alpha g\bigg)(t)\Delta t.
  \end{eqnarray*}
  and
  \begin{eqnarray*}
  \int_a^bg(t)\bigg({_t^{\mathbb{T}\,C}}D_b^\alpha f\bigg)(t)\Delta t=\left[{_a^\mathbb{T}}I_t^{1-\alpha}g(t)\cdot f(t)\right]\bigg\vert_{t=a}^b+\int_a^bf(\sigma(t))\bigg({_a^\mathbb{T}}D_t^\alpha g\bigg)(t)\Delta t.
  \end{eqnarray*}
\end{itemize}
\end{theorem}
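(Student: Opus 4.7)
The plan is to prove the three statements in order, with (a) as the workhorse and (b), (c) reduced to (a) combined with definitions and the classical time-scale integration by parts (Theorem \ref{ft3}).

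For (a), my approach is the usual Fubini-style swap. I would write out
\[
\int_a^b\varphi(t)\bigl({_a^\mathbb{T}}I_t^\alpha\psi\bigr)(t)\,\Delta t
=\frac{1}{\Gamma(\alpha)}\int_a^b\int_a^t (t-\sigma(s))^{\alpha-1}\,\varphi(t)\psi(s)\,\Delta s\,\Delta t,
\]
then verify absolute integrability of the double integrand on the triangle $\{a\le s\le t\le b\}$ using Hölder's inequality (Proposition \ref{17}) together with the $L^p$-boundedness of fractional integration (Theorem \ref{16}); the hypothesis $\frac{1}{p}+\frac{1}{q}\le 1+\alpha$ (strict when equality holds) is exactly what is needed to make the kernel integrable in the required way. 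After that, a Fubini exchange (in its time-scale version) turns the iterated integral into $\int_a^b\psi(s)\int_s^b \frac{(t-\sigma(s))^{\alpha-1}}{\Gamma(\alpha)}\varphi(t)\,\Delta t\,\Delta s=\int_a^b\psi(s)\bigl({_s^\mathbb{T}}I_b^\alpha\varphi\bigr)(s)\,\Delta s$, giving the equality.

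For (b), I would use the hypotheses to write $f={_a^\mathbb{T}}I_t^\alpha f_1$ with $f_1\in L^q(J)$ and $g={_t^\mathbb{T}}I_b^\alpha g_1$ with $g_1\in L^p(J)$. By Proposition \ref{173} and its left analogue, ${_a^\mathbb{T}}D_t^\alpha f=f_1$ and ${_t^\mathbb{T}}D_b^\alpha g=g_1$. Substituting and applying part (a) twice,
\[
\int_a^b g(t)\,{_a^\mathbb{T}}D_t^\alpha f(t)\,\Delta t
=\int_a^b\bigl({_t^\mathbb{T}}I_b^\alpha g_1\bigr)(t)\,f_1(t)\,\Delta t
=\int_a^b g_1(t)\,\bigl({_a^\mathbb{T}}I_t^\alpha f_1\bigr)(t)\,\Delta t
=\int_a^b f(t)\,{_t^\mathbb{T}}D_b^\alpha g(t)\,\Delta t,
\]
which is what is claimed.

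For (c), the idea is to unfold the Caputo derivative, push the fractional integral onto $g$ by (a), and then apply the ordinary time-scale integration by parts. Since ${_a^{\mathbb{T}\,C}}D_t^\alpha f={_a^\mathbb{T}}I_t^{1-\alpha}f^\Delta$, part (a) (with exponent $1-\alpha$) yields
\[
\int_a^b g(t)\,{_a^{\mathbb{T}\,C}}D_t^\alpha f(t)\,\Delta t
=\int_a^b f^\Delta(t)\,\bigl({_t^\mathbb{T}}I_b^{1-\alpha}g\bigr)(t)\,\Delta t.
\]
Setting $h(t)={_t^\mathbb{T}}I_b^{1-\alpha}g(t)$ and invoking Theorem \ref{ft3} with $h^\Delta(t)=-{_t^\mathbb{T}}D_b^\alpha g(t)$, I obtain the boundary term $[h\cdot f]_{t=a}^b$ together with $\int_a^b f^\sigma(t)\,{_t^\mathbb{T}}D_b^\alpha g(t)\,\Delta t$, which is the first displayed formula. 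The second formula is proved symmetrically, interchanging the roles of left and right operators.

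The main obstacle I foresee is the rigorous justification of the Fubini swap in (a): one must show that the time-scale Lebesgue measure on $J\times J$ restricted to $\{s\le t\}$ does allow interchange of $\Delta s$ and $\Delta t$, and that the bound on $\varphi(t)\psi(s)(t-\sigma(s))^{\alpha-1}$ under the stated Hardy–Littlewood–Sobolev-type condition $\frac{1}{p}+\frac{1}{q}\le 1+\alpha$ is integrable. Once (a) is handled, parts (b) and (c) follow cleanly from the identities between the fractional integrals and derivatives and from the classical integration by parts on time scales.
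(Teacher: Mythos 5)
Your proposal is correct, and parts (a) and (c) follow essentially the same path as the paper: for (a) the paper writes out the kernel and performs the Fubini swap on the triangle $\{a\le s\le t\le b\}$ exactly as you describe, and for (c) it likewise unfolds the Caputo derivative ${_a^{\mathbb{T}\,C}}D_t^\alpha f={_a^\mathbb{T}}I_t^{1-\alpha}f^\Delta$, moves the fractional integral onto $g$ by Fubini, and finishes with Theorem \ref{ft3}. Where you genuinely diverge is part (b). The paper proves (b) by a direct formal computation: it writes ${_a^\mathbb{T}}D_t^\alpha f$ as the delta derivative of $\frac{1}{\Gamma(1-\alpha)}\int_a^t(t-\sigma(s))^{-\alpha}f(s)\Delta s$ and then interchanges the order of integration \emph{with the delta derivative still inside}, which is not a plain application of Fubini and is the weakest step of the paper's argument. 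You instead exploit the hypotheses $f={_a^\mathbb{T}}I_t^\alpha f_1$, $g={_t^\mathbb{T}}I_b^\alpha g_1$ together with the composition identity $D^\alpha\circ I^\alpha=\mathrm{Id}$ (Proposition \ref{173} and its left-sided analogue) to reduce (b) to a single application of (a) with $\varphi=g_1\in L^p$ and $\psi=f_1\in L^q$. This is the classical Samko--Kilbas--Marichev route; it is cleaner, it makes transparent why the membership hypotheses $g\in{_t^\mathbb{T}}I_b^\alpha(L^p)$, $f\in{_a^\mathbb{T}}I_t^\alpha(L^q)$ are exactly what is needed, and it avoids the questionable derivative-under-Fubini step. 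The only things to flag are minor: the left-sided version of Proposition \ref{173} is used but only the right-sided one is stated in the paper (the analogue is asserted to hold after Theorem \ref{t4'}), and in (c) the application of (a) at order $1-\alpha$ strictly speaking calls for the exponent condition $\frac1p+\frac1q\le 2-\alpha$ rather than the stated $\frac1p+\frac1q\le 1+\alpha$; the paper glosses over the same point, but it would be worth a remark in a careful write-up.
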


\begin{proof}
\begin{itemize}
  \item [$(a)$]
It follows from Definition \ref{3} and Fubini's theorem on time scales that
{\setlength\arraycolsep{2pt}
\begin{eqnarray*}
&&\int_a^b\varphi(t)\bigg({_a^\mathbb{T}}I_t^\alpha\psi\bigg)(t)\Delta t\nonumber\\
&=&\int_a^b\varphi(t)\left(\int_a^t\frac{(t-\sigma(s))^{\alpha-1}}{\Gamma(\alpha)}\psi(s)\Delta s\right)\Delta t\nonumber\\
&=&\int_a^b\psi(s)\int_s^b\frac{(t-\sigma(s))^{\alpha-1}}{\Gamma(\alpha)}\varphi(t)\Delta t\Delta s\nonumber\\
&=&\int_a^b\psi(t)\int_t^b\frac{(s-\sigma(t))^{\alpha-1}}{\Gamma(\alpha)}\varphi(s)\Delta s\Delta t\nonumber\\
&=&\int_a^b\psi(t)\bigg({_t^\mathbb{T}}I_b^\alpha\varphi\bigg)(t)\Delta t.
\end{eqnarray*}}
The proof is complete.
\item [$(b)$]
It follows from Definition \ref{4} and Fubini's theorem on time scales that
{\setlength\arraycolsep{2pt}
\begin{eqnarray*}
&&\int_a^bg(t)\bigg({_a^\mathbb{T}}D_t^\alpha f\bigg)(t)\Delta t\nonumber\\
&=&\int_a^bg(t)\left(\frac{1}{\Gamma(1-\alpha)}\bigg(\int_a^t(t-\sigma(s))^{-\alpha}f(s)\Delta s\bigg)^\Delta\right)\Delta t\nonumber\\
&=&\int_a^bf(s)\left(\frac{1}{\Gamma(1-\alpha)}\bigg(\int_s^b(t-\sigma(s))^{-\alpha}g(t)\Delta t\bigg)^\Delta\right)\Delta s\nonumber\\
&=&\int_a^bf(t)\left(\frac{1}{\Gamma(1-\alpha)}\bigg(\int_t^b(s-\sigma(t))^{-\alpha}g(s)\Delta s\bigg)^\Delta\right)\Delta t\nonumber\\
&=&\int_a^bg(t)\bigg({_t^\mathbb{T}}D_b^\alpha f\bigg)(t)\Delta t.
\end{eqnarray*}}
The proof is complete.
\item [$(c)$]
It follows from Definition \ref{cd}, Fubini's theorem on time scales and Theorem \ref{ft3}  that
{\setlength\arraycolsep{2pt}
\begin{eqnarray*}
&&\int_a^bg(t)\bigg({_a^{\mathbb{T}\,C}}D_t^\alpha f\bigg)(t)\Delta t\nonumber\\
&=&\int_a^bg(t)\left(\frac{1}{\Gamma(1-\alpha)}\int_a^t(t-\sigma(s))^{-\alpha}f^\Delta(s)\Delta s\right)\Delta t\nonumber\\
&=&\int_a^bf^\Delta(s)\left(\frac{1}{\Gamma(1-\alpha)}\int_s^b(t-\sigma(s))^{-\alpha}g(t)\Delta t\right)\Delta s\nonumber\\
&=&\int_a^bf^\Delta(t)\left(\frac{1}{\Gamma(1-\alpha)}\int_t^b(s-\sigma(t))^{-\alpha}g(s)\Delta s\right)\Delta t\nonumber\\
\nonumber\\
&=&\left[{_t^\mathbb{T}}I_b^{1-\alpha}g(t)\cdot f(t)\right]\bigg\vert_{t=a}^b-\int_a^bf(\sigma(t))\left(\frac{1}{\Gamma(1-\alpha)}\int_t^b(s-\sigma(t))^{-\alpha}g(s)\Delta s\right)^\Delta\nonumber\\
&=&\left[{_t^\mathbb{T}}I_b^{1-\alpha}g(t)\cdot f(t)\right]\bigg\vert_{t=a}^b+\int_a^bf(\sigma(t))\left(\frac{-1}{\Gamma(1-\alpha)}\int_t^b(s-\sigma(t))^{-\alpha}g(s)\Delta s\right)^\Delta\nonumber\\
&=&\left[{_t^\mathbb{T}}I_b^{1-\alpha}g(t)\cdot f(t)\right]\bigg\vert_{t=a}^b+\int_a^bf(\sigma(t))\bigg({_t^\mathbb{T}}D_b^\alpha g\bigg)(t)\Delta t.
\end{eqnarray*}}
The second relation is obtained in a similar way. The proof is complete.
\end{itemize}
\end{proof}

\section{Fractional Sobolev spaces on time scales and their properties}
\setcounter{equation}{0}

In this section, inspired by the above discussion, we present and prove the following lemmas, propositions and theorems, which are of utmost significance for our main results. In the following, let $0<a<b$. Suppose $a,b\in\mathbb{T}$.

Motivated by Theorems \ref{8}-\ref{11}, we  can propose the following definition.
\begin{definition}
Let $0<\alpha\leq1$. By $AC_{\Delta,b^-}^{\alpha,1}(J,\mathbb{R}^N)$ we denote the set of all functions $f:J\rightarrow\mathbb{R}^N$ that have the representation
\begin{eqnarray}\label{22}
f(t)=\frac{1}{\Gamma(\alpha)}\frac{d}{(b-t)^{1-\alpha}}+\,_t^\mathbb{T}I_b^\alpha
\psi(t),\quad t\in J\quad\Delta-a.e.
\end{eqnarray}
with $d\in\mathbb{R}^N$ and $\psi\in L_\Delta^1$.
\end{definition}

\begin{theorem}\label{23}
Let $0<\alpha\leq1$ and $f\in L_\Delta^1$. Then function $f$ has the right Riemann$-$Liouville derivative $_t^\mathbb{T}D_b^\alpha f$ of order $\alpha$ on the interval $J$ if and only if $f\in AC_{\Delta,b^-}^{\alpha,1}(J,\mathbb{R}^N)$; that is, $f$ has the representation $(\ref{22})$. In such a case,
\begin{eqnarray*}
({_t^\mathbb{T}}I_b^{1-\alpha}f)(b)=d, \quad
({_t^\mathbb{T}}D_b^\alpha f)(t)=\psi(t),\quad t\in J\quad\Delta-a.e.
\end{eqnarray*}
\end{theorem}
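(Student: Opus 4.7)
The plan is to pivot both implications around the auxiliary function $F(t):={_t^{\mathbb{T}}}I_b^{1-\alpha}f(t)$, relying on the semigroup property of Proposition~\ref{6}, the absolute-continuity characterization of Theorem~\ref{9} applied at the right endpoint, the representation in Theorem~\ref{thm21}, and the Beta-type identity ${_t^{\mathbb{T}}}I_b^{1-\alpha}\bigl[(b-\cdot)^{\alpha-1}/\Gamma(\alpha)\bigr]\equiv 1$.

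For the forward direction, I would set $\psi:={_t^{\mathbb{T}}}D_b^\alpha f\in L_\Delta^1$ and $d:=F(b)$. By Definition~\ref{4}, $F^\Delta=-\psi$ $\Delta$-a.e., and Theorem~\ref{9} (after rewriting $F(t)=F(b)-\int_t^b F^\Delta(s)\,\Delta s$) yields $F(t)=d+{_t^{\mathbb{T}}}I_b^{1}\psi(t)$. Factoring the right-hand side via Proposition~\ref{6} as $d+{_t^{\mathbb{T}}}I_b^{1-\alpha}\circ {_t^{\mathbb{T}}}I_b^{\alpha}\psi(t)$ and rewriting the constant $d$ as ${_t^{\mathbb{T}}}I_b^{1-\alpha}\bigl[d(b-t)^{\alpha-1}/\Gamma(\alpha)\bigr]$ through the Beta-type identity, I would arrive at
\[
{_t^{\mathbb{T}}}I_b^{1-\alpha}\Bigl[f(t)-\frac{d(b-t)^{\alpha-1}}{\Gamma(\alpha)}-{_t^{\mathbb{T}}}I_b^{\alpha}\psi(t)\Bigr]\equiv 0.
\]
The uniqueness of solutions to Abel's integral equation on time scales, the same fact cited at the end of the proof of Theorem~\ref{thm21}, then forces the bracket to vanish $\Delta$-a.e., which is precisely the representation~(\ref{22}) and simultaneously identifies $d$ and $\psi$ as claimed.

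The converse is a direct verification: applying ${_t^{\mathbb{T}}}I_b^{1-\alpha}$ to both sides of~(\ref{22}) and invoking linearity, the Beta-type identity, and the semigroup property gives $F(t)=d+\int_t^b\psi(s)\,\Delta s$, which is $\Delta$-differentiable $\Delta$-a.e.\ with $F^\Delta=-\psi\in L_\Delta^1$, so Definition~\ref{4} furnishes ${_t^{\mathbb{T}}}D_b^\alpha f=\psi$ and $F(b)=d$. The technical crux is the Beta-type identity itself: on a general time scale the integrand reads $(s-\sigma(t))^{-\alpha}(b-s)^{\alpha-1}/[\Gamma(\alpha)\Gamma(1-\alpha)]$ rather than $(s-t)^{-\alpha}(b-s)^{\alpha-1}$, so the classical Beta-function evaluation $B(1-\alpha,\alpha)=\Gamma(1-\alpha)\Gamma(\alpha)$ is not directly available. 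I expect the cleanest route is to transplant the classical identity through the Laplace-transform consistency of Theorem~\ref{t4'} coupled with the semigroup law of Proposition~\ref{6}; a secondary subtlety is the justification that ``$f$ has a right Riemann$-$Liouville derivative'' supplies the absolute continuity of $F$ needed to apply Theorem~\ref{9}, which matches the standard interpretation underlying the continuous counterpart in Theorem~\ref{11}.
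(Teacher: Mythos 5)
Your proposal is correct in outline and its converse direction is essentially the paper's: both apply ${_t^{\mathbb{T}}}I_b^{1-\alpha}$ to the representation (\ref{22}), use the semigroup law of Proposition \ref{6} on the ${_t^{\mathbb{T}}}I_b^{\alpha}\psi$ term, and read off absolute continuity of ${_t^{\mathbb{T}}}I_b^{1-\alpha}f$ from Theorem \ref{9}. In the forward direction you diverge: after obtaining $({_t^{\mathbb{T}}}I_b^{1-\alpha}f)(t)=d+({_t^{\mathbb{T}}}I_b^{1}\psi)(t)$ exactly as the paper does, you invert ${_t^{\mathbb{T}}}I_b^{1-\alpha}$ by rewriting the whole right-hand side as an ${_t^{\mathbb{T}}}I_b^{1-\alpha}$-image and appealing to uniqueness for Abel's integral equation (the device used at the end of Theorem \ref{thm21}), whereas the paper instead applies ${_t^{\mathbb{T}}}I_b^{\alpha}$ to both sides to reach $({_t^{\mathbb{T}}}I_b^{1}f)(t)=({_t^{\mathbb{T}}}I_b^{\alpha}d)(t)+({_t^{\mathbb{T}}}I_b^{1}{_t^{\mathbb{T}}}I_b^{\alpha}\psi)(t)$ and then $\Delta$-differentiates. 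The two maneuvers are morally equivalent, and crucially both stand or fall on the same kernel computation that you are the only one to name explicitly: the Beta-type identity ${_t^{\mathbb{T}}}I_b^{1-\alpha}\bigl[(b-\cdot)^{\alpha-1}/\Gamma(\alpha)\bigr]\equiv 1$ (for you) versus the power rule $\bigl(({_t^{\mathbb{T}}}I_b^{\alpha}d)(t)\bigr)^{\Delta}=-d(b-t)^{\alpha-1}/\Gamma(\alpha)$ (for the paper), neither of which is immediate on a general time scale because the kernel involves $\sigma(t)$. The paper silently absorbs this into its citation of Proposition \ref{6}; your suggestion to transplant the classical identity via the Laplace-transform consistency of Theorem \ref{t4'} is no less rigorous than what the paper actually does, and flagging the issue is a genuine improvement. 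What your route buys is avoiding the $\Delta$-differentiation of a singular integral at the last step; what the paper's route buys is avoiding a second appeal to Abel uniqueness. Either is acceptable at the paper's level of rigor.
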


\begin{proof}
Let  $f\in L_\Delta^1$ have a right  Riemann$-$Liouville derivative $_t^\mathbb{T}D_b^\alpha f$. This means that $_t^\mathbb{T}I_b^{1-\alpha} f$ is (identified to) an absolutely continuous function. From the integral representation of Theorem \ref{8}, there exist a constant vector $d\in\mathbb{R}^N$ and a function $\psi\in L_\Delta^1$ such that
\begin{eqnarray}\label{24}
({_t^\mathbb{T}}I_b^{1-\alpha}f)(t)=d+(_t^\mathbb{T}I_b^1\psi)(t),\quad t\in J,
\end{eqnarray}
with $({_t^\mathbb{T}}I_b^{1-\alpha}f)(b)=d$ and $-\bigg(({_t^\mathbb{T}}I_b^{1-\alpha}f)(t)\bigg)^\Delta={_t^\mathbb{T}}D_b^{\alpha}f(t)=\psi(t)$, $t\in J\quad\Delta-a.e.$.

By Proposition \ref{6} and applying $_t^\mathbb{T}I_b^\alpha$ to $(\ref{24})$ we obtain
\begin{eqnarray}\label{25}
({_t^\mathbb{T}}I_b^{1}f)(t)=({_t^\mathbb{T}}I_b^{\alpha}d)(t)
+(_t^\mathbb{T}I_b^1{_t^\mathbb{T}}I_b^\alpha\psi)(t),\quad t\in J\quad\Delta-a.e..
\end{eqnarray}
The result follows from the $\Delta-$differentiability of $(\ref{25})$.

Conversely, now, let us assume that $(\ref{22})$ holds true. From Proposition \ref{6} and applying ${_t^\mathbb{T}}I_b^{1-\alpha}$ on $(\ref{22})$ we obtain
\begin{eqnarray*}
({_t^\mathbb{T}}I_b^{1-\alpha}f)(t)=d+
({_t^\mathbb{T}}I_b^1 \psi)(t),\quad t\in J\quad\Delta-a.e.
\end{eqnarray*}
and then, ${_t^\mathbb{T}}I_b^{1-\alpha}f$ has an absolutely continuous representation and $f$ has a right  Riemann $-$Liouville derivative ${_t^\mathbb{T}}D_b^{\alpha}f$.
This completes the proof.
\end{proof}

\begin{remark}
\begin{itemize}
  \item [$(i)$]
  By $AC_{\Delta,b^-}^{\alpha,p}$ $(1\leq p<\infty)$ we denote the set of all functions $f:J\rightarrow\mathbb{R}^N$ possessing representation $(\ref{22})$ with $d\in\mathbb{R}^N$ and $\psi\in L_\Delta^p$.
  \item [$(ii)$]
  It is easy to see that  Theorem \ref{23} implies the following one (for any $1\leq p<\infty$): $f$ has the right Riemann$-$Liouville derivative ${_t^\mathbb{T}}D_b^{\alpha}f\in L_\Delta^p$ if and only if $f\in AC_{\Delta,b^-}^{\alpha,p}$, that is, $f$ has the representation $(\ref{22})$ with $\psi\in L_\Delta^p$.
\end{itemize}
\end{remark}

\begin{definition}
Let $0<\alpha\leq1$ and let $1\leq p<\infty$. By right Sobolev space of order $\alpha$ we will mean the set $W_{\Delta,b^-}^{\alpha,p}=W_{\Delta,b^-}^{\alpha,p}(J,\mathbb{R}^N)$ given by
\begin{eqnarray*}
W_{\Delta,b^-}^{\alpha,p}:=\bigg\{u\in L_\Delta^p;\,\exists\, g\in L_\Delta^p,\, \forall\varphi\in C_{c,rd}^\infty \,\,such \,\,that \int_a^bu(t)\cdot{_a^\mathbb{T}}D_t^{\alpha}\varphi(t)\Delta t=\int_a^bg(t)\cdot\varphi(t)\Delta t\bigg\}.
\end{eqnarray*}
\end{definition}

\begin{remark}
A function $g$ given above will be called the weak right fractional derivative of order $0<\alpha\leq1$ of $u$; let us denote it by $^\mathbb{T}u_{b^-}^\alpha$. The uniqueness of this weak derivative follows from (\cite{7}).
\end{remark}

We have the following characterization of $W_{\Delta,b^-}^{\alpha,p}$.
\begin{theorem}\label{thm32}
If $0<\alpha\leq1$ and $1\leq p<\infty$, then
\begin{eqnarray*}
W_{\Delta,b^-}^{\alpha,p}=AC_{\Delta,b^-}^{\alpha,p}\cap L_\Delta^p.
\end{eqnarray*}
\end{theorem}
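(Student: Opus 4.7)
The plan is to prove the two inclusions separately. For $AC_{\Delta,b^-}^{\alpha,p}\cap L_\Delta^p\subseteq W_{\Delta,b^-}^{\alpha,p}$, I would start from the integral representation (\ref{22}) and use the fractional integration-by-parts formula to exhibit the Riemann--Liouville derivative as a weak derivative. For the converse, I would subtract off an explicit antiderivative and reduce matters to a fractional analog of Lemma \ref{13} characterizing functions with vanishing weak right fractional derivative.

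For the forward inclusion, take $u$ in the left-hand side. The $L^p$ version of Theorem \ref{23}, recorded in item (ii) of the remark following it, yields $d\in\mathbb{R}^N$ and $\psi\in L_\Delta^p$ such that $u(t)=\frac{d}{\Gamma(\alpha)(b-t)^{1-\alpha}}+{_t^\mathbb{T}}I_b^\alpha\psi(t)$ and ${_t^\mathbb{T}}D_b^\alpha u=\psi$. For $\varphi\in C_{c,rd}^\infty$, I split $\int_a^b u\cdot{_a^\mathbb{T}}D_t^\alpha\varphi\,\Delta t$ accordingly. On the ${_t^\mathbb{T}}I_b^\alpha\psi$ piece, Theorem \ref{12}(b) applies: membership $\varphi\in{_a^\mathbb{T}}I_t^\alpha(L^q)$ follows from the left counterpart of Theorem \ref{thm21} (since $\varphi$ is smooth with compact support in the open interval, one has ${_a^\mathbb{T}}I_t^{1-\alpha}\varphi(a)=0$ and this integral is $C^1$), and Proposition \ref{173} then gives $\int_a^b({_t^\mathbb{T}}I_b^\alpha\psi)\cdot{_a^\mathbb{T}}D_t^\alpha\varphi\,\Delta t=\int_a^b\psi\varphi\,\Delta t$. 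On the kernel piece, the right Riemann--Liouville derivative of $\frac{1}{(b-t)^{1-\alpha}}$ vanishes (its right $(1-\alpha)$-integral is constant), so a direct Fubini computation via Theorem \ref{12}(a) forces the corresponding integral to be zero. Together, this identifies $\psi$ as the weak right fractional derivative of $u$, so $u\in W_{\Delta,b^-}^{\alpha,p}$.

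For the backward inclusion, let $u\in W_{\Delta,b^-}^{\alpha,p}$ with weak derivative $g\in L_\Delta^p$, and set $w:={_t^\mathbb{T}}I_b^\alpha g$. By Theorem \ref{23} and Proposition \ref{173}, $w\in AC_{\Delta,b^-}^{\alpha,p}$ with parameters $d=0$ and $\psi=g$; by the forward inclusion, its weak fractional derivative is $g$. Hence $v:=u-w\in L_\Delta^p$ satisfies $\int_a^b v\cdot{_a^\mathbb{T}}D_t^\alpha\varphi\,\Delta t=0$ for every $\varphi\in C_{c,rd}^\infty$. The problem thus reduces to the following fractional Du Bois--Reymond statement: every such $v$ coincides $\Delta$-a.e.\ with $\frac{c}{\Gamma(\alpha)(b-t)^{1-\alpha}}$ for some $c\in\mathbb{R}^N$. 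Granting this, $u=v+w$ fits representation (\ref{22}) with data $(c,g)$ and lies in $AC_{\Delta,b^-}^{\alpha,p}$.

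The main obstacle is precisely that fractional Du Bois--Reymond lemma. My strategy is to transfer the outer $\Delta$-derivative off the test function by exploiting Theorem \ref{12}(a): I would rewrite $\int v\cdot({_a^\mathbb{T}}I_t^{1-\alpha}\varphi)^\Delta\,\Delta t$ so that ${_t^\mathbb{T}}I_b^{1-\alpha}v$ is paired against $\varphi^\Delta$, the boundary contributions from $\Delta$-integration by parts being killed by the compact support of $\varphi$. This would give $\int_a^b({_t^\mathbb{T}}I_b^{1-\alpha}v)\cdot\varphi^\Delta\,\Delta t=0$ for every $\varphi\in C_{c,rd}^\infty$, whence Lemma \ref{13} forces ${_t^\mathbb{T}}I_b^{1-\alpha}v$ to be $\Delta$-a.e.\ constant. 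Consequently ${_t^\mathbb{T}}D_b^\alpha v=0$, and Theorem \ref{23} applied with $\psi\equiv0$ pins $v$ down to the claimed kernel. The delicate point is rigorously justifying that Fubini/integration-by-parts swap when one of the factors carries an outer $\Delta$-derivative on the time scale and the other is only $L_\Delta^p$; once this is done carefully, the remainder of the argument is routine.
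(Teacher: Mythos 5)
Your proposal is correct and its computational core coincides with the paper's: in both directions everything ultimately hinges on the transposition identity $\int_a^b u\cdot({_a^\mathbb{T}}I_t^{1-\alpha}\varphi)^\Delta\,\Delta t=\int_a^b({_t^\mathbb{T}}I_b^{1-\alpha}u)\cdot\varphi^\Delta\,\Delta t$, which is exactly the paper's display (\ref{27}). The differences are organizational. In the forward inclusion the paper applies Theorem \ref{12}(b) to $u$ wholesale, which is slightly loose because the kernel term $d(b-t)^{\alpha-1}/\Gamma(\alpha)$ need not lie in ${_t^\mathbb{T}}I_b^\alpha(L_\Delta^p)$ when $d\neq0$; your splitting of $u$ into the kernel and ${_t^\mathbb{T}}I_b^\alpha\psi$, with the kernel disposed of via the constancy of its $(1-\alpha)$-integral, is the more careful version of the same step. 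In the backward inclusion the paper concludes directly from (\ref{26}) and (\ref{27}) that ${_t^\mathbb{T}}I_b^{1-\alpha}u\in W_{\Delta,b^-}^{1,p}$ and then invokes the first-order theory (absolute continuity together with Theorem \ref{23}); you instead subtract the particular solution $w={_t^\mathbb{T}}I_b^\alpha g$ and apply Lemma \ref{13} to ${_t^\mathbb{T}}I_b^{1-\alpha}(u-w)$ --- a du Bois--Reymond reduction that yields a slightly more self-contained argument at the cost of one extra invocation of the forward inclusion. Two caveats, both shared with the paper rather than specific to you: the test identity is established only for $\varphi\in C_{c,rd}^\infty$ while Lemma \ref{13} and Definition \ref{14} are stated for the larger class $C_{0,rd}^1(J^k)$, so a density remark is needed to close that gap; and the rigorous justification of the transposition identity itself, which you rightly flag as the delicate point, is no more detailed in the paper's equation (\ref{27}) than in your sketch.
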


\begin{proof}
For one thing, if $u\in AC_{\Delta,b^-}^{\alpha,p}\cap L_\Delta^p$, then from Theorem \ref{23} it follows that $u$ has the derivative ${_t^\mathbb{T}}D_b^{\alpha}u\in L_\Delta^p$. Theorem \ref{12} implies that
\begin{eqnarray*}
\int_a^bu(t)\,{_a^\mathbb{T}}D_t^{\alpha}\varphi(t)\Delta t=\int_a^b({_t^\mathbb{T}}D_b^{\alpha}u)(t)\,\varphi(t)\Delta t
\end{eqnarray*}
for any $\varphi\in C_{c,rd}^\infty$. So, $u\in W_{\Delta,b^-}^{\alpha,p}$ with
\begin{eqnarray*}
^\mathbb{T}u_{b^-}^\alpha=g={_t^\mathbb{T}}D_b^{\alpha}u\in L_\Delta^p.
\end{eqnarray*}

For another thing, now, let us assume that $u\in W_{\Delta,b^-}^{\alpha,p}$, that is, $u\in L_\Delta^p$, and there exists a function $g\in L_\Delta^p$ such that
\begin{eqnarray}\label{26}
\int_a^bu(t){_a^\mathbb{T}}D_t^{\alpha}\varphi(t)\Delta t=\int_a^bg(t)\varphi(t)\Delta t
\end{eqnarray}
for any $\varphi\in C_{c,rd}^\infty$.

To show that $u\in AC_{\Delta,b^-}^{\alpha,p}\cap L_\Delta^p$ it suffices to check (Theorem \ref{23} and definition of $AC_{\Delta,b^-}^{\alpha,p}$) that $u$ possesses the right Riemann$-$Liouville derivative of order $\alpha$, belonging to $L_\Delta^p$, that is,   ${_t^\mathbb{T}}I_b^{1-\alpha}u$ is absolutely continuous on $[a,b]_\mathbb{T}$ and its delta derivative of $\alpha$ order (existing $\Delta-a.e.$ on $J$) belongs to $L_\Delta^p$.

In fact, let $\varphi\in C_{c,rd}^\infty$, then $\varphi\in {_a^\mathbb{T}}D_t^{\alpha}(C_{rd})$ and ${_a^\mathbb{T}}D_t^{\alpha}\varphi=-({_t^\mathbb{T}}I_b^{1-\alpha})^\Delta$. From Theorem \ref{12}   it follows that
\begin{align}\label{27}
\int_a^bu(t){_a^\mathbb{T}}D_t^{\alpha}\varphi(t)\Delta t
=&\int_a^bu(t)({_a^\mathbb{T}}I_t^{1-\alpha}\varphi)^\Delta(t)\Delta t\nonumber\\
=&\int_a^b({_t^\mathbb{T}}D_b^{1-\alpha}{_t^\mathbb{T}}I_b^{1-\alpha}u)(t)
({_a^\mathbb{T}}I_t^{1-\alpha}\varphi)^\Delta(t)\Delta t\nonumber\\
=&\int_a^b({_t^\mathbb{T}}I_b^{1-\alpha}u)(t)(\varphi)^\Delta(t)\Delta t.
\end{align}
In view of $(\ref{26})$ and $(\ref{27})$, we get
\begin{eqnarray*}
\int_a^b({_t^\mathbb{T}}I_b^{1-\alpha}u)(t)\varphi^\Delta(t)\Delta t=\int_a^bg(t)\varphi(t)\Delta t
\end{eqnarray*}
for any $\varphi\in C_{c,rd}^\infty$. So, ${_t^\mathbb{T}}I_b^{1-\alpha}u\in W_{\Delta,b^-}^{1,p}$. Consequently, ${_t^\mathbb{T}}I_b^{1-\alpha}u$ is absolutely continuous and its delta derivative is equal to $\Delta-a.e.$ on $J$ to $g\in L_\Delta^P$.
\end{proof}

From the  proof of Theorem \ref{thm32} and   the uniqueness of the weak fractional derivative the following theorem follows.
\begin{theorem}\label{28}
If $0<\alpha\leq1$ and $1\leq p<\infty$, then the weak left fractional derivative $^\mathbb{T}u_{b^-}^\alpha$ of a function $u\in W_{\Delta,b^-}^{\alpha,p}$ coincides with its right Riemann$-$Liouville fractional derivative ${_t^\mathbb{T}}D_b^{\alpha}u$ $\Delta-a.e.$ on $J$.
\end{theorem}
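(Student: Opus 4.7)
The plan is to leverage Theorem \ref{thm32} together with the integration-by-parts identity of Theorem \ref{12}(b) and the uniqueness clause in the defining relation of the weak right fractional derivative that was invoked in the Remark preceding this theorem. The whole argument is essentially a repackaging of the ``forward'' direction in the proof of Theorem \ref{thm32}: that proof already produced, for each $u\in AC_{\Delta,b^-}^{\alpha,p}\cap L_\Delta^p$, a concrete weak right fractional derivative equal to ${_t^\mathbb{T}}D_b^{\alpha}u$; uniqueness then forces $^\mathbb{T}u_{b^-}^\alpha$ to be this same object $\Delta$-a.e.

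Concretely, I would proceed in three steps. First, take $u\in W_{\Delta,b^-}^{\alpha,p}$ and apply Theorem \ref{thm32} to conclude $u\in AC_{\Delta,b^-}^{\alpha,p}$. By Theorem \ref{23} (together with the remark that extends it to $L_\Delta^p$-regularity), the right Riemann--Liouville derivative ${_t^\mathbb{T}}D_b^{\alpha}u$ exists $\Delta$-a.e.\ on $J$ and belongs to $L_\Delta^p$. Second, since every $\varphi\in C_{c,rd}^\infty$ lies in ${_a^\mathbb{T}}I_t^{\alpha}(L^q)$ (this is exactly the hypothesis needed to invoke the fractional integration by parts), Theorem \ref{12}(b) gives
\begin{equation*}
\int_a^b u(t)\,{_a^\mathbb{T}}D_t^{\alpha}\varphi(t)\,\Delta t=\int_a^b ({_t^\mathbb{T}}D_b^{\alpha}u)(t)\,\varphi(t)\,\Delta t\qquad\text{for every }\varphi\in C_{c,rd}^\infty.
\end{equation*}
Third, comparing this with the defining identity
\begin{equation*}
\int_a^b u(t)\,{_a^\mathbb{T}}D_t^{\alpha}\varphi(t)\,\Delta t=\int_a^b{}^{\mathbb{T}}u_{b^-}^{\alpha}(t)\,\varphi(t)\,\Delta t
\end{equation*}
of the weak derivative, I would subtract to obtain
\begin{equation*}
\int_a^b\bigl({_t^\mathbb{T}}D_b^{\alpha}u-{}^{\mathbb{T}}u_{b^-}^{\alpha}\bigr)(t)\,\varphi(t)\,\Delta t=0\qquad\text{for every }\varphi\in C_{c,rd}^\infty,
\end{equation*}
and invoke the uniqueness statement (the fundamental lemma of the calculus of variations in the fractional time-scale setting, which is the same ingredient already cited in the Remark following Definition 4.2) to conclude that ${_t^\mathbb{T}}D_b^{\alpha}u={}^{\mathbb{T}}u_{b^-}^{\alpha}$ $\Delta$-a.e.\ on $J$.

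The only subtle point, and hence the spot where I expect to have to be careful, is the justification that test functions $\varphi\in C_{c,rd}^\infty$ legitimately feed into Theorem \ref{12}(b); this requires noting that such $\varphi$ vanish near the endpoints and in particular admit representations of the form $\varphi={_a^\mathbb{T}}I_t^\alpha g$ for some $g\in L^q$, so the hypothesis $f\in{_a^\mathbb{T}}I_t^\alpha(L^q)$ of part (b) is satisfied. Everything else is formal bookkeeping: Theorem \ref{thm32} supplies the regularity, Theorem \ref{12}(b) supplies the duality, and the stated uniqueness of the weak derivative supplies the identification.
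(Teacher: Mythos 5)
Your proposal is correct and follows essentially the same route as the paper: the paper states that Theorem \ref{28} ``follows from the proof of Theorem \ref{thm32} and the uniqueness of the weak fractional derivative,'' and the forward direction of that proof is precisely your argument (Theorem \ref{23} gives existence of ${_t^\mathbb{T}}D_b^{\alpha}u\in L_\Delta^p$, Theorem \ref{12}(b) shows it satisfies the defining identity of the weak derivative against every $\varphi\in C_{c,rd}^\infty$, and uniqueness forces $^\mathbb{T}u_{b^-}^\alpha={_t^\mathbb{T}}D_b^{\alpha}u$ $\Delta$-a.e.). Your added care about verifying $\varphi\in{_a^\mathbb{T}}I_t^{\alpha}(L^q)$ for test functions is a point the paper glosses over, but it does not change the structure of the argument.
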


\begin{remark}\label{29}
\begin{itemize}
  \item [$(1)$]
  If $0<\alpha\leq1$ and $(1-\alpha)p<1$, then $AC_{\Delta,b^-}^{\alpha,p}\subset L_\Delta^p$ and, consequently,
  \begin{eqnarray*}
  W_{\Delta,b^-}^{\alpha,p}=AC_{\Delta,b^-}^{\alpha,p}\cap L_\Delta^p=AC_{\Delta,b^-}^{\alpha,p}.
  \end{eqnarray*}
  \item [$(2)$]
  If $0<\alpha\leq1$ and $(1-\alpha)p\geq1$, then $W_{\Delta,b^-}^{\alpha,p}=AC_{\Delta,b^-}^{\alpha,p}\cap L_\Delta^p$ is the set of all functions belong to $AC_{\Delta,b^-}^{\alpha,p}$ that satisfy the condition $({_t^\mathbb{T}}I_b^{1-\alpha}f)(b)=0$.
\end{itemize}
\end{remark}

By using the definition of $W_{\Delta,b^-}^{\alpha,p}$ with $0<\alpha\leq1$ and Theorem \ref{28}, one can easily prove the following result.

\begin{theorem}\label{3.4}
Let $0<\alpha\leq1$ and $1\leq p<\infty$ and $u\in L_\Delta^p$. Then $u\in W_{\Delta,b^-}^{\alpha,p}$ if and only if there exists a function $g\in L_\Delta^p$ such that
\begin{eqnarray*}
\int_a^bu(t){_a^\mathbb{T}}D_t^{\alpha}\varphi(t)\Delta t=\int_a^bg(t)\varphi(t)\Delta t,\quad \varphi\in C_{c,rd}^\infty.
\end{eqnarray*}
In such a case, there exists the right Riemann$-$Liouville derivative ${_t^\mathbb{T}}D_b^{\alpha}u$ of $u$ and $g={_t^\mathbb{T}}D_b^{\alpha}u$.
\end{theorem}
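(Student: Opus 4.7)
The statement is essentially a reformulation of the definition of $W_{\Delta,b^-}^{\alpha,p}$ together with the identification of the weak right fractional derivative with the Riemann--Liouville derivative, so my plan is to assemble it directly from the earlier results in the section rather than do new analysis.

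First I would dispatch the equivalence. The forward direction is built into the very definition of $W_{\Delta,b^-}^{\alpha,p}$: if $u \in W_{\Delta,b^-}^{\alpha,p}$, then by definition $u \in L_\Delta^p$ and there exists $g \in L_\Delta^p$ (namely the weak right fractional derivative $^{\mathbb{T}}u_{b^-}^{\alpha}$) satisfying the integration-by-parts identity against every $\varphi \in C_{c,rd}^{\infty}$. Conversely, if $u \in L_\Delta^p$ admits some $g \in L_\Delta^p$ with the displayed identity, then the defining property of $W_{\Delta,b^-}^{\alpha,p}$ is verified verbatim, so $u \in W_{\Delta,b^-}^{\alpha,p}$ and $g$ is (by the uniqueness remark following the definition) the weak derivative $^{\mathbb{T}}u_{b^-}^{\alpha}$.

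Second, I would pull in Theorems \ref{thm32} and \ref{28} to upgrade $g$ to the Riemann--Liouville derivative. Under the hypothesis, the biconditional just established puts $u \in W_{\Delta,b^-}^{\alpha,p}$, so Theorem \ref{thm32} gives $u \in AC_{\Delta,b^-}^{\alpha,p}\cap L_\Delta^p$. By Theorem \ref{23}, membership in $AC_{\Delta,b^-}^{\alpha,p}$ is equivalent to the existence of the right Riemann--Liouville derivative ${_t^{\mathbb{T}}}D_b^{\alpha}u \in L_\Delta^p$. Finally, Theorem \ref{28} says that the weak derivative $^{\mathbb{T}}u_{b^-}^{\alpha}$ coincides $\Delta$-a.e.\ with ${_t^{\mathbb{T}}}D_b^{\alpha}u$, and since $^{\mathbb{T}}u_{b^-}^{\alpha}=g$ from the first step, we conclude $g={_t^{\mathbb{T}}}D_b^{\alpha}u$ $\Delta$-a.e.\ on $J$, completing the proof.

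I do not foresee a genuine obstacle here, since all the analytic content (the integration by parts from Theorem \ref{12}, the characterization $W_{\Delta,b^-}^{\alpha,p}=AC_{\Delta,b^-}^{\alpha,p}\cap L_\Delta^p$, the integral representation in Theorem \ref{23}, and the uniqueness of the weak derivative used in Theorem \ref{28}) has already been established upstream. The only point requiring a bit of care is being explicit that the class of test functions $C_{c,rd}^{\infty}$ used in the definition is exactly the class appearing in the statement, so that "being a weak right fractional derivative" literally means what the displayed identity says; this lets the equivalence be invoked with no modification.
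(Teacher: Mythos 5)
Your proposal is correct and follows essentially the same route as the paper, which simply notes that the theorem follows from the definition of $W_{\Delta,b^-}^{\alpha,p}$ together with Theorem \ref{28}; your additional invocations of Theorem \ref{thm32} and Theorem \ref{23} merely spell out the steps the paper leaves implicit.
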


\begin{remark}
Function $g$ will be called the weak right fractional derivative of order $\alpha$ of $u\in W_{\Delta,b^-}^{\alpha,p}$. Its uniqueness follows from \cite{7}. From the above theorem it follows that it coincides with the appropriate Riemann$-$Liouville derivative.
\end{remark}

Let us fix $0<\alpha\leq1$ and consider in the space $W_{\Delta,b^-}^{\alpha,p}$ a norm $\|\cdot\|_{W_{\Delta,b^-}^{\alpha,p}}$ given by
\begin{eqnarray*}
\|u\|_{W_{\Delta,b^-}^{\alpha,p}}^p=\|u\|_{L_\Delta^p}^p+\|_t^\mathbb{T}D_b^\alpha u\|_{L_\Delta^p}^p,\quad u\in W_{\Delta,b^-}^{\alpha,p}.
\end{eqnarray*}
(Here $\|\cdot\|_{L_\Delta}^p$ denotes the delta norm in $L_\Delta^p$ (Theorem \ref{15})).

\begin{lemma}\label{30}
Let $0<\alpha\leq1$ and $1\leq p<\infty$, then
\begin{eqnarray*}
\|_t^\mathbb{T}I_b^\alpha \varphi\|_{L_\Delta^p}^p\leq K^p\|\varphi\|_{L_\Delta^P}^P,
\end{eqnarray*}
where $K=\frac{(b-a)^\alpha}{\Gamma(\alpha+1)}$. i.e., the fractional integration operator is bounded in $L_\Delta^p$.
\end{lemma}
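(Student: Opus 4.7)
The plan is to mimic the classical argument behind Theorem \ref{16}, but carried out in the delta calculus. Starting from the explicit form
$$_t^{\mathbb{T}}I_b^\alpha\varphi(t)=\frac{1}{\Gamma(\alpha)}\int_t^b(s-\sigma(t))^{\alpha-1}\varphi(s)\Delta s,$$
I would first split the kernel as $(s-\sigma(t))^{\alpha-1}=(s-\sigma(t))^{(\alpha-1)/p'}\cdot(s-\sigma(t))^{(\alpha-1)/p}$ and apply H\"older's inequality on time scales (Proposition \ref{17}), which yields
$$|_t^{\mathbb{T}}I_b^\alpha\varphi(t)|^p\leq\frac{1}{\Gamma(\alpha)^p}\Bigl(\int_t^b(s-\sigma(t))^{\alpha-1}\Delta s\Bigr)^{p-1}\int_t^b(s-\sigma(t))^{\alpha-1}|\varphi(s)|^p\Delta s.$$

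Next, I would integrate this inequality over $t\in J^0$ and use Fubini's theorem on time scales to exchange the order of integration in the resulting double integral, so that the dependence on $|\varphi|^p$ is isolated:
$$\int_a^b\int_t^b(s-\sigma(t))^{\alpha-1}|\varphi(s)|^p\Delta s\,\Delta t=\int_a^b|\varphi(s)|^p\int_a^s(s-\sigma(t))^{\alpha-1}\Delta t\,\Delta s.$$
The whole argument then reduces to establishing the uniform kernel estimates
$$\int_t^b(s-\sigma(t))^{\alpha-1}\Delta s\leq\frac{(b-t)^\alpha}{\alpha}\leq\frac{(b-a)^\alpha}{\alpha},\qquad\int_a^s(s-\sigma(t))^{\alpha-1}\Delta t\leq\frac{(s-a)^\alpha}{\alpha}\leq\frac{(b-a)^\alpha}{\alpha}.$$
Combining these bounds with the previous display produces
$$\|_t^{\mathbb{T}}I_b^\alpha\varphi\|_{L_\Delta^p}^p\leq\frac{1}{\Gamma(\alpha)^p}\Bigl(\frac{(b-a)^\alpha}{\alpha}\Bigr)^{p-1}\cdot\frac{(b-a)^\alpha}{\alpha}\|\varphi\|_{L_\Delta^p}^p=\frac{(b-a)^{\alpha p}}{\Gamma(\alpha+1)^p}\|\varphi\|_{L_\Delta^p}^p=K^p\|\varphi\|_{L_\Delta^p}^p,$$
which is the desired conclusion.

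The main obstacle is the kernel estimate, since on a general time scale the antiderivative of $(s-\sigma(t))^{\alpha-1}$ is \emph{not} $(s-\sigma(t))^\alpha/\alpha$ (unlike the $\mathbb{T}=\mathbb{R}$ case). My plan for this step is to invoke Proposition \ref{2} for the continuous increasing extension so that the delta integral is dominated by its Riemann counterpart, for which the explicit antiderivative is available; alternatively, one can argue via the generalized polynomials and the Cauchy result of Theorem \ref{t2'}. A minor technical point, which I would handle at the start, is the behaviour of the kernel near $s=\sigma(t)$: on the discrete portion of $\mathbb{T}$ the integrand must be understood as the corresponding generalized monomial, so that $\int_t^b$ is effectively $\int_{\sigma(t)}^b$ and no singular value at $s=\sigma(t)$ is seen by the delta integral.
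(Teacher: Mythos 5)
Your argument is correct in outline, but it takes a genuinely different route from the paper. The paper's proof of Lemma \ref{30} is a one-line reduction: it invokes Theorem \ref{16} (the classical $L^p(J_{\mathbb{R}})$ bound for the fractional integration operator, which already carries the constant $\frac{(b-a)^{\alpha}}{\alpha\,\Gamma(\alpha)}=K$), together with Proposition \ref{17} and Proposition \ref{2}, i.e.\ it transfers the whole operator bound to the continuous setting rather than re-proving it. You instead redo the classical Schur-type argument entirely inside the delta calculus --- kernel splitting, H\"older's inequality on time scales, Fubini on time scales, and then two scalar kernel estimates --- and only at the very last step use Proposition \ref{2} to dominate the delta integrals of the kernel by their Riemann counterparts. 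What your route buys is a self-contained proof that does not lean on the unexplained identification of $_t^{\mathbb{T}}I_b^{\alpha}$ with the classical $I_{b^-}^{\alpha}$ of an extension of $\varphi$; what it costs is that you must justify the time-scale Fubini exchange and the two kernel bounds yourself. On that last point, be aware of two delicacies: first, Proposition \ref{2} as stated applies to \emph{increasing} continuous functions, whereas $s\mapsto(s-\sigma(t))^{\alpha-1}$ is decreasing for $0<\alpha<1$, so the inequality $\int_t^b(s-\sigma(t))^{\alpha-1}\Delta s\le\int_t^b(s-t)^{\alpha-1}\,ds$ needs its own (short) justification; second, the value of the kernel at $s=\sigma(t)$ on the scattered part of $\mathbb{T}$ is genuinely singular under Definition \ref{3}. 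You correctly flag both issues, and the paper's own computations (e.g.\ in the proof of Proposition \ref{034}) gloss over exactly the same points, so your proposal is at least as rigorous as the published argument and arrives at the same constant $K=\frac{(b-a)^{\alpha}}{\Gamma(\alpha+1)}$.
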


\begin{proof}
The conclusion follows from Theorem \ref{16}, Proposition \ref{17} and Proposition \ref{2}. The proof is complete.
\end{proof}

\begin{theorem}\label{thm35}
If $0<\alpha\leq1$, then the norm $\|\cdot\|_{W_{\Delta,b^-}^{\alpha,p}}$ is equivalent to the norm $\|\cdot\|_{b, W_{\Delta,b^-}^{\alpha,p}}$ given by
\begin{eqnarray*}
\|u\|_{b,W_{\Delta,b^-}^{\alpha,p}}^p=\vert{_t^\mathbb{T}I_b^{1-\alpha}}u(b)\vert^p
+\|\,_t^\mathbb{T}D_b^\alpha u\|_{L_\Delta^p}^p,\quad u\in W_{\Delta,b^-}^{\alpha,p}.
\end{eqnarray*}
\end{theorem}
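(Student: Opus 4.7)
My plan is to prove the two one-sided inequalities separately, in both cases exploiting the representation
\[
u(t) = \frac{d}{\Gamma(\alpha)(b-t)^{1-\alpha}} + {_t^{\mathbb{T}}}I_b^{\alpha}\psi(t), \qquad d = \bigl({_t^{\mathbb{T}}}I_b^{1-\alpha}u\bigr)(b), \quad \psi = {_t^{\mathbb{T}}}D_b^{\alpha}u,
\]
furnished by Theorem \ref{23} together with the Remark following it (valid for $u \in W_{\Delta,b^-}^{\alpha,p}$ by Theorem \ref{thm32}). The companion identity $\bigl({_t^{\mathbb{T}}}I_b^{1-\alpha}u\bigr)(t) = d + \int_t^{b}\psi(s)\,\Delta s$, from formula (\ref{24}), will be central.

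\textbf{Step 1: $\|u\|_{b,W_{\Delta,b^-}^{\alpha,p}} \le C_{2}\,\|u\|_{W_{\Delta,b^-}^{\alpha,p}}$.} Since $\|\psi\|_{L_\Delta^p}$ appears in both norms, only $|d|$ needs to be controlled. I will integrate the identity above over $[a,b]_{\mathbb{T}}$ to obtain
\[
d(b-a) = \int_a^{b}\bigl({_t^{\mathbb{T}}}I_b^{1-\alpha}u\bigr)(t)\,\Delta t - \int_a^{b}\!\!\int_t^{b}\psi(s)\,\Delta s\,\Delta t,
\]
and then apply H\"older's inequality (Proposition \ref{17}) to each term and Lemma \ref{30} (with $\alpha$ replaced by $1-\alpha$) to bound $\|{_t^{\mathbb{T}}}I_b^{1-\alpha}u\|_{L_\Delta^p}$ by a multiple of $\|u\|_{L_\Delta^p}$. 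This produces $|d| \le C\bigl(\|u\|_{L_\Delta^p} + \|\psi\|_{L_\Delta^p}\bigr)$, whence $|d|^p + \|\psi\|_{L_\Delta^p}^p \le C_2^{\,p}(\|u\|_{L_\Delta^p}^p + \|\psi\|_{L_\Delta^p}^p)$.

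\textbf{Step 2: $\|u\|_{W_{\Delta,b^-}^{\alpha,p}} \le C_{1}\,\|u\|_{b,W_{\Delta,b^-}^{\alpha,p}}$.} Using the representation and the triangle inequality, I reduce to estimating
\[
\|u\|_{L_\Delta^p} \le \frac{|d|}{\Gamma(\alpha)}\left\|\tfrac{1}{(b-\cdot)^{1-\alpha}}\right\|_{L_\Delta^p} + \bigl\|{_t^{\mathbb{T}}}I_b^{\alpha}\psi\bigr\|_{L_\Delta^p}.
\]
The second term is controlled by Lemma \ref{30}. For the first, I split according to the dichotomy in Remark \ref{29}: if $(1-\alpha)p < 1$, then $(b-t)^{-(1-\alpha)p}$ is a monotone function that is Lebesgue-integrable on $[a,b]$, so Proposition \ref{2} yields
\[
\int_a^{b}(b-t)^{-(1-\alpha)p}\,\Delta t \le \int_a^{b}(b-t)^{-(1-\alpha)p}\,dt = \frac{(b-a)^{1-(1-\alpha)p}}{1-(1-\alpha)p},
\]
giving a finite constant bound. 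If instead $(1-\alpha)p \ge 1$, Remark \ref{29}(2) asserts $d = 0$ for $u \in W_{\Delta,b^-}^{\alpha,p}$, so the singular term simply vanishes. Either way, $\|u\|_{L_\Delta^p} \le C\bigl(|d| + \|\psi\|_{L_\Delta^p}\bigr)$, which raised to the $p$-th power and combined with $\|\psi\|_{L_\Delta^p}^p$ closes the estimate.

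\textbf{Main obstacle.} The delicate point is the singular factor $(b-t)^{-(1-\alpha)}$, which is not in $L_\Delta^p$ when $(1-\alpha)p \ge 1$. The argument works because exactly in this regime Remark \ref{29}(2) forces the offending coefficient $d$ to be zero, neutralizing the blow-up. The other technical care is to apply Proposition \ref{2} correctly: the integrand is monotone increasing in $t$, so its $\Delta$-integral is dominated by the corresponding Lebesgue integral (after, if needed, a routine truncation near $b$ to address the continuity hypothesis of Proposition \ref{2}).
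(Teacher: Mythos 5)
Your proof is correct, and its second half (bounding $\|u\|_{W_{\Delta,b^-}^{\alpha,p}}$ by the $b$-norm) is essentially the paper's argument: decompose $u$ via the representation of Theorem \ref{23}, control the singular term $(b-t)^{\alpha-1}$ through the monotonicity comparison of Proposition \ref{2} when $(1-\alpha)p<1$, invoke Lemma \ref{30} for ${_t^{\mathbb{T}}}I_b^{\alpha}\psi$, and observe that $d=0$ when $(1-\alpha)p\geq1$ by Remark \ref{29}. Where you genuinely diverge is in the reverse inequality, i.e.\ in bounding $|d|=\vert({_t^{\mathbb{T}}}I_b^{1-\alpha}u)(b)\vert$. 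The paper works pointwise: it applies the first mean value theorem (Theorem \ref{18}, Corollary \ref{19}) to produce a mean value $\Lambda_i$ of each coordinate of ${_t^{\mathbb{T}}}I_b^{1-\alpha}u$, writes $\Lambda_i=\theta_i({_t^{\mathbb{T}}}I_b^{1-\alpha}u)^i(t_0)$ for an auxiliary factor $\theta_i$, and then propagates from $t_0$ to $b$ via the absolute-continuity formula of Theorem \ref{9}. You instead integrate the identity $({_t^{\mathbb{T}}}I_b^{1-\alpha}u)(t)=d+\int_t^b\psi(s)\Delta s$ over $[a,b]_{\mathbb{T}}$, solve for $d(b-a)$, and estimate the two resulting terms by H\"older's inequality and Lemma \ref{30} applied at order $1-\alpha$. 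Your route is cleaner and arguably sounder: it yields explicit constants depending only on $\alpha$, $p$, $a$, $b$, whereas the paper's $\theta_i$ depends on $u$ (it is the ratio of the mean of $({_t^{\mathbb{T}}}I_b^{1-\alpha}u)^i$ to its value at a fixed $t_0$) and is not uniformly bounded, a point the paper does not address beyond a case split on whether $({_t^{\mathbb{T}}}I_b^{1-\alpha}u)^i(t_0)$ vanishes. The technical caveats you flag (truncation near $b$ before applying Proposition \ref{2}, and the trivial reading of Lemma \ref{30} at order $1-\alpha=0$ when $\alpha=1$) are exactly the right ones and are treated no more carefully in the paper itself.
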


\begin{proof}

  $(1)$
  Assume that $(1-\alpha)p<1$. On the one hand, for $u\in W_{\Delta,b^-}^{\alpha,p}$ given by
  \begin{eqnarray*}
  u(t)=\frac{1}{\Gamma(\alpha)}\frac{d}{(b-t)^{1-\alpha}}+\,_t^\mathbb{T}I_b^\alpha\varphi(t)
  \end{eqnarray*}
  with $d\in\mathbb{R}^N$ and $\psi\in L_\Delta^p$. Since $(b-t)^{(\alpha-1)p}$ is an increasing monotone function, by using Proposition \ref{2}, we can write that $\int_a^b(b-t)^{(\alpha-1)p}\Delta t\leq \int_a^b(b-t)^{(\alpha-1)p}dt$. And taking into account Lemma \ref{30}, we have
  \begin{align*}
  \|u\|_{L_\Delta^p}^p
  =&\int_a^b\bigg\vert\frac{1}{\Gamma(\alpha)}\frac{d}{(b-t)^{1-\alpha}}
  +\,_a^\mathbb{T}I_t^\alpha\psi(t)\bigg\vert^p\Delta t\\
  \leq&2^{p-1}\bigg(\frac{\vert d\vert^p}{\Gamma^p(\alpha)}\left\vert\int_a^b(b-t)^{(\alpha-1)p}\Delta t\right\vert+\|\,_t^\mathbb{T}I_b^\alpha\varphi\|_{L_\Delta^p}^p\bigg)\\
  \leq&2^{p-1}\bigg(\frac{\vert d\vert^p}{\Gamma^p(\alpha)}\left\vert\int_a^b(b-t)^{(\alpha-1)p}dt\right\vert +\|\,_t^\mathbb{T}I_b^\alpha\psi\|_{L_\Delta^p}^p\bigg)\\
  \leq&2^{p-1}\bigg(\frac{\vert d\vert^p}{\Gamma^p(\alpha)}\frac{1}{(\alpha-1)p+1}(b-a)^{(\alpha-1)p+1} +K^p\|\psi\|_{L_\Delta^p}^p\bigg),
  \end{align*}
  where $K$ is defined in Lemma \ref{30}. Noting that $d={_a^\mathbb{T}}I_t^{1-\alpha}u(b)$, $\varphi=-{_a^\mathbb{T}}D_t^\alpha u$, thus, one obtains
  \begin{align*}
  \|u\|_{L_\Delta^p}^p
  \leq&L_{\alpha,0}(\vert d\vert^p+\|\psi\|_{L_\Delta^p}^p)\\
  \leq&L_{\alpha,0}\bigg(\vert{_t^\mathbb{T}}I_b^{1-\alpha}u(b)\vert^p+\|{_t^\mathbb{T}}D_b^\alpha u\|_{L_\Delta^p}^p\bigg)\\
  =&L_{\alpha,0}\|u\|_{b,W_{\Delta,b^-}^{\alpha,p}}^p,
  \end{align*}
  where
  \begin{eqnarray*}
  L_{\alpha,0}=2^{p-1}\bigg(\frac{(b-a)^{1-(1-\alpha)p}}{\Gamma^p(\alpha)(1-(1-\alpha)p)} +K^p\bigg).
  \end{eqnarray*}
  Consequently,
  \begin{align*}
  \|u\|_{W_{\Delta,a^+}^{\alpha,p}}^p
  =&\|u\|_{L_\Delta^P}^P+\|_a^\mathbb{T}D_t^\alpha u\|_{L_\Delta^p}^p\\
  \leq&L_{\alpha,1}\|u\|_{a,W_{\Delta,b^-}^{\alpha,p}}^p,
  \end{align*}
  where $L_{\alpha,1}=L_{\alpha,0}+1$.

  On the other hand, now, we will prove that there exists a constant $M_{\alpha,1}$ such that
  \begin{eqnarray*}
  \|u\|_{b,W_{\Delta,b^-}^{\alpha,p}}^p\leq M_{\alpha,1}\|u\|_{W_{\Delta,b^-}^{\alpha,p}}^p,
  \quad u\in W_{\Delta,b^-}^{\alpha,p}.
  \end{eqnarray*}
  Indeed, let $u\in W_{\Delta,b^-}^{\alpha,p}$ and consider  coordinate functions $({_t^\mathbb{T}}I_b^{1-\alpha}u)^i$ of ${_t^\mathbb{T}}I_b^{1-\alpha}u$ with  $i\in\{1,\ldots,N\}$. Lemma \ref{30}, Theorem \ref{18} and Corollary \ref{19} imply that there exist constants
  \begin{eqnarray*}
  \Lambda_i\in
  \bigg[\inf_{t\in[a,b)_\mathbb{T}}({_t^\mathbb{T}}I_b^{1-\alpha}u)^i(t),
  \sup_{t\in[a,b)_\mathbb{T}}({_t^\mathbb{T}}I_b^{1-\alpha}u)^i(t)\bigg],
  \end{eqnarray*}
  such that
  \begin{eqnarray*}
  \Lambda_i=\frac{1}{b-a}\int_a^b
  ({_t^\mathbb{T}}I_b^{1-\alpha}u)^i(s)\Delta s
  \end{eqnarray*}
  Hence, if, for all $i=1,2,\ldots,N$,   $({_t^\mathbb{T}}I_b^{1-\alpha}u)^i(t_0)\neq0$, then we can take constants $\theta_i$ such that

  \begin{eqnarray*}
  \theta_i({_t^\mathbb{T}}I_b^{1-\alpha}u)^i(t_0)=\Lambda_i=\frac{1}{b-a}\int_a^b
  ({_t^\mathbb{T}}I_b^{1-\alpha}u)^i(s)\Delta s
  \end{eqnarray*}
  for fixed $t_0\in J^0$. Therefore, we have
  \begin{eqnarray*}
  ({_t^\mathbb{T}}I_b^{1-\alpha}u)^i(t_0)=\frac{\theta_i}{b-a}\int_a^b
  ({_t^\mathbb{T}}I_b^{1-\alpha}u)^i(s)\Delta s.
  \end{eqnarray*}
  From the absolute continuity (Theorem \ref{9}) of $({_t^\mathbb{T}}I_b^{1-\alpha}u)^i$ it follows that
  \begin{eqnarray*}
  ({_t^\mathbb{T}}I_b^{1-\alpha}u)^i(t)=({_t^\mathbb{T}}I_b^{1-\alpha}u)^i(t_0)
  +\int_{[t_0,t)_\mathbb{T}}\bigg[({_t^\mathbb{T}}I_b^{1-\alpha}u)^i(s)\bigg]^\Delta \Delta s
  \end{eqnarray*}
  for any $t\in J$. Consequently, combining with Proposition \ref{5} and Lemma \ref{30}, we see that
  \begin{align*}
  \vert({_t^\mathbb{T}}I_b^{1-\alpha}u)^i(t)\vert
  =&\bigg\vert({_t^\mathbb{T}}I_b^{1-\alpha}u)^i(t_0)
  +\int_{[t_0,t)_\mathbb{T}}\bigg[({_t^\mathbb{T}}I_b^{1-\alpha}u)^i(s)\bigg]^\Delta \Delta s\bigg\vert\\
  \leq&\frac{\vert\theta_i\vert}{b-a}
  \|{_t^\mathbb{T}}I_b^{1-\alpha}u\|_{L_\Delta^1}+\int_{[t_0,t)_\mathbb{T}}
  \vert({_t^\mathbb{T}}D_b^{\alpha}u)(s)\vert\Delta s\\
  \leq&\frac{\vert\theta_i\vert}{b-a}
  \|{_t^\mathbb{T}}I_b^{1-\alpha}u\|_{L_\Delta^1}
  +\|{_t^\mathbb{T}}D_b^{\alpha}u\|_{L_\Delta^1}\\
  \leq&\frac{\vert\theta_i\vert}{b-a}\frac{(b-a)^{1-\alpha}}{\Gamma(2-\alpha)}
  \|u\|_{L_\Delta^1}+\|{_t^\mathbb{T}}D_b^{\alpha}u\|_{L_\Delta^1}\\
  \end{align*}
  for $t\in J$. In particular,
  \begin{eqnarray*}
  \vert({_t^\mathbb{T}}I_b^{1-\alpha}u)^i(b)\vert\leq\frac{\vert\theta_i\vert}{b-a}\frac{(b-a)^{1-\alpha}}
  {\Gamma(2-\alpha)}\|u\|_{L_\Delta^1}+\|{_t^\mathbb{T}}D_b^{\alpha}u\|_{L_\Delta^1}.
  \end{eqnarray*}
  So,
  \begin{align*}
  \vert({_t^\mathbb{T}}I_b^{1-\alpha}u)(b)\vert
  \leq&N\bigg(\frac{\vert\theta\vert(b-a)^{-\alpha}}
  {\Gamma(2-\alpha)}+1\bigg)\left(\|u\|_{L_\Delta^1}
  +\|{_t^\mathbb{T}}D_b^{\alpha}u\|_{L_\Delta^1}\right)\\
  \leq& NM_{\alpha,0}(b-a)^{\frac{p-1}{p}}\left(\|u\|_{L_\Delta^p}
  +\|{_t^\mathbb{T}}D_b^{\alpha}u\|_{L_\Delta^p}\right),
  \end{align*}
  where $\vert\theta\vert=\max\limits_{i\in\{1,2,\ldots,N\}}\vert\theta_i\vert$ and $M_{\alpha,0}=\frac{|\theta|(b-a)^{-\alpha}}{\Gamma(2-\alpha)}+1$. Thus,
  \begin{align*}
  |({_t^\mathbb{T}}I_b^{1-\alpha}u)(b)|^p
  \leq&N^pM^p_{\alpha,0}(b-a)^{p-1}2^{p-1}\left(\|u\|^p_{L_\Delta^p}
  +\|{_t^\mathbb{T}}D_b^{\alpha}u\|^p_{L_\Delta^p}\right),
  \end{align*}
  and, consequently,
  \begin{align*}
  \|u\|_{b,W_{\Delta,b^-}^{\alpha,p}}^p
  =&\vert{_t^\mathbb{T}I_b^{1-\alpha}}u(b)\vert^p+\|_t^\mathbb{T}D_b^\alpha u\|_{L_\Delta^p}^p\\
  \leq&\bigg(N^pM^p_{\alpha,0}(b-a)^{p-1}2^{p-1}+1\bigg)\left(\|u\|^p_{L_\Delta^p}
  +\|{_t^\mathbb{T}}D_b^{\alpha}u\|^p_{L_\Delta^p}\right)\\
  =&M_{\alpha,1}\|u\|_{W_{\Delta,b^-}^{\alpha,p}}^p,
  \end{align*}
  where $M_{\alpha,1}=N^pM^p_{\alpha,0}(b-a)^{p-1}2^{p-1}+1$.

  If some of or even all of $({_t^\mathbb{T}}I_b^{1-\alpha}u)^i(t_0)=0$,  from the above proof process, we can see that our conclusion is still valid.

   $(2)$
  When $(1-\alpha)p\geq1$, then (Remark \ref{29}) $W_{\Delta,b^-}^{\alpha,p}=AC_{\Delta,b^-}^{\alpha,p}\cap L_\Delta^p$ is the set of all functions belong to $AC_{\Delta,b^-}^{\alpha,p}$ that satisfy the condition $({_t^\mathbb{T}}I_b^{1-\alpha}u)(b)=0$. Consequently, in the same way as in the case of $(1-\alpha)p<1$ (putting $d=0$), we obtain the inequality
  \begin{eqnarray*}
  \|u\|_{W_{\Delta,b^-}^{\alpha,p}}^p\leq L_{\alpha,1}\|u\|_{b,W_{\Delta,b^-}^{\alpha,p}}^p,\quad \mathrm{with}\,\,\mathrm{some}\,\,L_{\alpha,1}>0.
  \end{eqnarray*}
  The inequality
  \begin{eqnarray*}
  \|u\|_{b,W_{\Delta,b^-}^{\alpha,p}}^p\leq M_{\alpha,1}\|u\|_{W_{\Delta,b^-}^{\alpha,p}}^p,\quad \mathrm{with}\,\,\mathrm{some}\,\,M_{\alpha,1}>0
  \end{eqnarray*}
  is obvious (it is sufficient to put $M_{\alpha,1}=1$ and use the fact that $({_a^\mathbb{T}}I_t^{1-\alpha}u)(b)=0$).
 The proof is complete.
\end{proof}

We are now in a position to state and prove  some basic properties of the introduced space.
\begin{theorem}\label{3.6}
The space $W_{\Delta,b^-}^{\alpha,p}$ is complete with respect to each of the norms $\|\cdot\|_{W_{\Delta,b^-}^{\alpha,p}}$ and $\|\cdot\|_{b, W_{\Delta,b^-}^{\alpha,p}}$ for any $0<\alpha\leq1$ and $1\leq p<\infty$.
\end{theorem}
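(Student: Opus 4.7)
The plan is to reduce completeness to the scalar fact that $L_\Delta^p$ is a Banach space (Theorem \ref{15}) and to exploit the characterization of $W_{\Delta,b^-}^{\alpha,p}$ via the integration-by-parts identity in Theorem \ref{3.4}. Since Theorem \ref{thm35} already establishes that $\|\cdot\|_{W_{\Delta,b^-}^{\alpha,p}}$ and $\|\cdot\|_{b,W_{\Delta,b^-}^{\alpha,p}}$ are equivalent, it suffices to prove completeness for the first of the two norms; the assertion for the second then follows automatically.

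First I would take a Cauchy sequence $\{u_n\}\subset W_{\Delta,b^-}^{\alpha,p}$ with respect to $\|\cdot\|_{W_{\Delta,b^-}^{\alpha,p}}$. By definition of this norm, both $\{u_n\}$ and $\{{_t^{\mathbb{T}}}D_b^\alpha u_n\}$ are Cauchy sequences in $L_\Delta^p(J^0)$. Invoking the completeness of $L_\Delta^p$ given by Theorem \ref{15}, one obtains $u,g\in L_\Delta^p$ with $u_n\to u$ and ${_t^{\mathbb{T}}}D_b^\alpha u_n\to g$ in $L_\Delta^p$.

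Next I would identify $g$ as the weak right fractional derivative of $u$. For each $n$, Theorem \ref{3.4} (applied to $u_n\in W_{\Delta,b^-}^{\alpha,p}$) yields
\begin{eqnarray*}
\int_a^b u_n(t)\,{_a^{\mathbb{T}}}D_t^\alpha\varphi(t)\,\Delta t=\int_a^b {_t^{\mathbb{T}}}D_b^\alpha u_n(t)\,\varphi(t)\,\Delta t,\quad \varphi\in C_{c,rd}^\infty.
\end{eqnarray*}
I want to pass to the limit on both sides. For a fixed test function $\varphi\in C_{c,rd}^\infty$, the function ${_a^{\mathbb{T}}}D_t^\alpha\varphi$ is bounded on $J$ (and in particular lies in $L_\Delta^{p'}$, where $p'$ is the conjugate exponent), hence H\"older's inequality (Proposition \ref{17}) applied to the difference $u_n-u$ against ${_a^{\mathbb{T}}}D_t^\alpha\varphi$, and to $\varphi$ against ${_t^{\mathbb{T}}}D_b^\alpha u_n - g$, shows that both sides converge. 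This gives
\begin{eqnarray*}
\int_a^b u(t)\,{_a^{\mathbb{T}}}D_t^\alpha\varphi(t)\,\Delta t=\int_a^b g(t)\,\varphi(t)\,\Delta t,\quad \varphi\in C_{c,rd}^\infty.
\end{eqnarray*}
By Theorem \ref{3.4}, this means $u\in W_{\Delta,b^-}^{\alpha,p}$ with ${_t^{\mathbb{T}}}D_b^\alpha u=g$. Combining the two $L_\Delta^p$-convergences then yields $\|u_n-u\|_{W_{\Delta,b^-}^{\alpha,p}}\to 0$, proving completeness for the first norm. Completeness for $\|\cdot\|_{b,W_{\Delta,b^-}^{\alpha,p}}$ follows at once by Theorem \ref{thm35}.

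The only step requiring care is justifying the limit inside the test-function identity; this is the main obstacle, since $C_{c,rd}^\infty$ on time scales is less familiar than its continuous counterpart. The crux is that for each fixed $\varphi$, the integrand ${_a^{\mathbb{T}}}D_t^\alpha\varphi$ is continuous and therefore $\Delta$-essentially bounded on the compact support of $\varphi$, which places it in $L_\Delta^{p'}(J^0)$ and validates the H\"older estimate used above. Once this passage to the limit is carried out, the remainder is routine.
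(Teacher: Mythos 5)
Your argument is correct, but it follows a genuinely different route from the paper's. The paper first invokes Theorem \ref{thm35} to reduce everything to the norm $\|\cdot\|_{b,W_{\Delta,b^-}^{\alpha,p}}$; a Cauchy sequence for that norm gives Cauchy sequences $\{{_t^{\mathbb{T}}}I_b^{1-\alpha}u_k(b)\}$ in $\mathbb{R}^N$ and $\{{_t^{\mathbb{T}}}D_b^\alpha u_k\}$ in $L_\Delta^p$, with limits $d$ and $\psi$, and the limit function is then written down explicitly through the representation $(\ref{22})$, $u(t)=\frac{d}{\Gamma(\alpha)}(b-t)^{\alpha-1}+{_t^{\mathbb{T}}}I_b^\alpha\psi(t)$, relying on the structural characterization $W_{\Delta,b^-}^{\alpha,p}=AC_{\Delta,b^-}^{\alpha,p}\cap L_\Delta^p$ and a case split on $(1-\alpha)p$ to check $u\in L_\Delta^p$. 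You instead work directly with $\|\cdot\|_{W_{\Delta,b^-}^{\alpha,p}}$ and run the classical Sobolev completeness argument: extract $L_\Delta^p$-limits $u$ and $g$, pass to the limit in the defining identity of Theorem \ref{3.4} via H\"older, and identify $g$ as the weak (hence Riemann--Liouville) derivative of $u$. Your route is more elementary and avoids both the representation theorem and the singular term $(b-t)^{\alpha-1}$, at the cost of needing ${_a^{\mathbb{T}}}D_t^\alpha\varphi\in L_\Delta^{p'}$ for test functions --- which, as you note, is already implicitly required for the definition of $W_{\Delta,b^-}^{\alpha,p}$ to make sense, so this is not a genuine gap. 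The paper's route buys an explicit formula for the limit and makes transparent how the boundary datum ${_t^{\mathbb{T}}}I_b^{1-\alpha}u(b)$ behaves under limits, which is what the $b$-norm is designed to track. Both proofs lean on Theorem \ref{thm35} to transfer completeness to the other norm.
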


\begin{proof}In view of Theorem \ref{thm35}, we only need to  show that $W_{\Delta,b^-}^{\alpha,p}$ with the norm $\|\cdot\|_{b, W_{\Delta,b^-}^{\alpha,p}}$ is complete. Let $\{u_k\}\subset W_{\Delta,b^-}^{\alpha,p}$ be a Cauchy sequence with respect to this norm. So, the sequences $\{{_t^\mathbb{T}}I_b^{1-\alpha}u_k(b)\}$  and $\{_t^\mathbb{T}D_b^\alpha u_k\}$ are   Cauchy sequences in   $\mathbb{R}^N$ and $L_\Delta^p$, respectively.

Let $d\in\mathbb{R}^N$ and $\psi\in L_\Delta^p$ be the limits of the above sequences in $\mathbb{R}^N$ and $L_\Delta^p$, respectively. Then the function
\begin{eqnarray*}
u(t)=\frac{d}{\Gamma(\alpha)}{(b-t)^{\alpha-1}}+{_t^\mathbb{T}}I_b^\alpha\psi(t),\quad t\in J\quad\Delta-a.e.
\end{eqnarray*}
belongs to $W_{\Delta,b^-}^{\alpha,p}$ and is the limit of $\{u_k\}$ in $W_{\Delta,b^-}^{\alpha,p}$ with respect to $\|\cdot\|_{b, W_{\Delta,b^-}^{\alpha,p}}$. (To assert that $u\in L_\Delta^p$ it is sufficient to consider the cases $(1-\alpha)p<1$ and $(1-\alpha)p\geq1$. In the second case ${_t^\mathbb{T}}I_b^{1-\alpha}u_k(b)=0$ for any $k\in\mathbb{N}$ and, consequently, $d=0$.) The proof is complete.
\end{proof}

In the proofs of the next two theorems we use the method presented in \cite{9}
$P_{121}$ Proposition 8.1 $(b), (c)$.
\begin{theorem}\label{31}
The space $W_{\Delta,b^-}^{\alpha,p}$ is reflexive with respect to the norm $\|\cdot\|_{W_{\Delta,b^-}^{\alpha,p}}$ for any $0<\alpha\leq1$ and $1<p<\infty$.
\end{theorem}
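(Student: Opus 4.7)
The plan is to realize $W_{\Delta,b^-}^{\alpha,p}$ as an isometric copy of a closed subspace of a reflexive product Banach space, and then quote the two classical permanence principles: closed subspaces of reflexive spaces are reflexive, and reflexivity is preserved by isometric isomorphism. Concretely, I would define
\[
T : W_{\Delta,b^-}^{\alpha,p} \longrightarrow L_\Delta^p(J^0) \times L_\Delta^p(J^0),\qquad T(u) := \bigl(u,\;{_t^\mathbb{T}}D_b^\alpha u\bigr),
\]
and equip the target with the product norm $\|(f,g)\|^p := \|f\|_{L_\Delta^p}^p + \|g\|_{L_\Delta^p}^p$. By the very definition of $\|\cdot\|_{W_{\Delta,b^-}^{\alpha,p}}$, the map $T$ is a linear isometry.

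Next I would check reflexivity of the codomain. Since $1<p<\infty$, the Lebesgue-type space $L_\Delta^p(J^0)$ is reflexive: the proof copies the classical argument, using H\"older's inequality in the form of Proposition \ref{17} to identify its dual with $L_\Delta^{p'}(J^0)$ and to obtain uniform convexity via Clarkson's inequalities. The finite product of two reflexive spaces, endowed with the product norm above, is then reflexive.

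I then need to show that $T(W_{\Delta,b^-}^{\alpha,p})$ is closed in $L_\Delta^p(J^0) \times L_\Delta^p(J^0)$. If $(u_k,\,{_t^\mathbb{T}}D_b^\alpha u_k) \to (u,g)$ in the product norm, then $\{u_k\}$ is Cauchy in $W_{\Delta,b^-}^{\alpha,p}$ for the norm $\|\cdot\|_{W_{\Delta,b^-}^{\alpha,p}}$, so by the completeness proved in Theorem \ref{3.6} its limit $\tilde u$ lies in $W_{\Delta,b^-}^{\alpha,p}$ with ${_t^\mathbb{T}}D_b^\alpha u_k \to {_t^\mathbb{T}}D_b^\alpha \tilde u$ in $L_\Delta^p(J^0)$; uniqueness of limits in $L_\Delta^p(J^0)$ forces $u=\tilde u$ and $g = {_t^\mathbb{T}}D_b^\alpha u$, so $(u,g) = T(u) \in T(W_{\Delta,b^-}^{\alpha,p})$.

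Combining these steps, $T(W_{\Delta,b^-}^{\alpha,p})$ is a closed linear subspace of the reflexive Banach space $L_\Delta^p(J^0) \times L_\Delta^p(J^0)$, hence reflexive; since $T$ is an isometric isomorphism onto its image, $W_{\Delta,b^-}^{\alpha,p}$ itself is reflexive, which is the claim. The only non-routine ingredient is the reflexivity of $L_\Delta^p(J^0)$ for $1<p<\infty$: the paper only stated in Theorem \ref{15} that $L_\Delta^p(J^0)$ is a Banach space, so I would expect the main work to consist in justifying the $L^p$--$L^{p'}$ duality on the time-scale measure space, after which the rest of the proof is a direct application of the Proposition 8.1 (b),(c) schema from \cite{9} already invoked by the authors.
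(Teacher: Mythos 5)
Your proposal is correct and follows essentially the same route as the paper: the authors also define the isometry $u\mapsto(u,\;{_t^\mathbb{T}}D_b^\alpha u)$ into $L_\Delta^p\times L_\Delta^p$, observe that its image is a closed subspace of this reflexive product, and conclude by the permanence of reflexivity under closed subspaces and isometric isomorphism. Your version is slightly more careful in two places the paper glosses over, namely deducing closedness of the image from the completeness result of Theorem \ref{3.6} and noting that the reflexivity of $L_\Delta^p$ itself requires the $L^p$--$L^{p'}$ duality argument, which the paper simply asserts.
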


\begin{proof}
Let us consider $W_{\Delta,b^-}^{\alpha,p}$ with the norm $\|\cdot\|_{W_{\Delta,b^-}^{\alpha,p}}$ and define a mapping
\begin{eqnarray*}
\lambda:W_{\Delta,b^-}^{\alpha,p}\ni u\mapsto\left(u,\, _t^\mathbb{T}D_b^\alpha u\right)\in L_\Delta^p\times L_\Delta^p.
\end{eqnarray*}
It is obvious that
\begin{eqnarray*}
\|u\|_{W_{\Delta,b^-}^{\alpha,p}}=\|\lambda u\|_{L_\Delta^p\times L_\Delta^p},
\end{eqnarray*}
where
\begin{eqnarray*}
\|\lambda u\|_{L_\Delta^p\times L_\Delta^p}=\bigg(\sum_{i=1}^2\|(\lambda u)_i\|_{L_\Delta^p}^p\bigg)^{\frac{1}{p}},\quad \lambda u=\left(u,\, _t^\mathbb{T}D_b^\alpha u\right)\in L_\Delta^p\times L_\Delta^p,
\end{eqnarray*}
which means that the operator $\lambda:u\mapsto\left(u,\, _t^\mathbb{T}D_b^\alpha u\right)$ is an isometric isomorphic mapping and the space $W_{\Delta,b^-}^{\alpha,p}$ is isometric isomorphic to the space $\Omega=\bigg\{\left(u,\, _t^\mathbb{T}D_b^\alpha u\right):\forall u\in W_{\Delta,b^-}^{\alpha,p}\bigg\}$, which is a closed subset of $L_\Delta^p\times L_\Delta^p$ as $W_{\Delta,b^-}^{\alpha,p}$ is closed.

Since $L_\Delta^p$ is reflexive, the Cartesian product space $L_\Delta^p\times L_\Delta^p$ is also a reflexive space with respect to the norm $\|v\|_{L_\Delta^p\times L_\Delta^p}=\bigg(\sum\limits_{i=1}^2\|v_i\|_{L_\Delta^p}^p\bigg)^{\frac{1}{p}}$, where $v=\left(v_1,\, v_2\right)\in L_\Delta^p\times L_\Delta^p$.

Thus, $W_{\Delta,b^-}^{\alpha,p}$ is reflexive with respect to the norm $\|\cdot\|_{W_{\Delta,b^-}^{\alpha,p}}$.
\end{proof}

\begin{theorem}
The space $W_{\Delta,b^-}^{\alpha,p}$ is separable with respect to the norm $\|\cdot\|_{W_{\Delta,b^-}^{\alpha,p}}$ for any $0<\alpha\leq1$ and $1\leq p<\infty$.
\end{theorem}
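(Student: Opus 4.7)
The plan is to mirror the argument used in the proof of Theorem 4.4 on reflexivity, replacing reflexivity of $L_\Delta^p$ by its separability. The key device is again the isometric embedding
\begin{eqnarray*}
\lambda:W_{\Delta,b^-}^{\alpha,p}\ni u\mapsto\bigl(u,\,{_t^\mathbb{T}}D_b^\alpha u\bigr)\in L_\Delta^p\times L_\Delta^p,
\end{eqnarray*}
which, as already shown, satisfies $\|u\|_{W_{\Delta,b^-}^{\alpha,p}}=\|\lambda u\|_{L_\Delta^p\times L_\Delta^p}$ and whose image $\Omega=\lambda(W_{\Delta,b^-}^{\alpha,p})$ is a closed subspace of $L_\Delta^p\times L_\Delta^p$.

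First I would recall (or briefly verify from the theory of $L_\Delta^p$ developed in \cite{7}, paralleling the classical case) that $L_\Delta^p(J^0)$ is separable for $1\leq p<\infty$; a countable dense set can be obtained from $\Delta$-simple functions whose values are rational and whose supports are finite unions of elementary time-scale intervals with rational endpoints drawn from a countable dense subset of $J$. From the separability of $L_\Delta^p$, it follows immediately that the product $L_\Delta^p\times L_\Delta^p$ equipped with the norm $\|v\|_{L_\Delta^p\times L_\Delta^p}=\bigl(\sum_{i=1}^2\|v_i\|_{L_\Delta^p}^p\bigr)^{1/p}$ is separable, since the Cartesian product of two separable metric spaces is separable (take products of countable dense sets).

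Next, any subset of a separable metric space is separable in the induced metric, so $\Omega\subset L_\Delta^p\times L_\Delta^p$ is separable. Pulling a countable dense subset of $\Omega$ back through the isometry $\lambda^{-1}$ produces a countable dense subset of $W_{\Delta,b^-}^{\alpha,p}$ with respect to $\|\cdot\|_{W_{\Delta,b^-}^{\alpha,p}}$, which is exactly what we want.

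The only nontrivial step is the separability of $L_\Delta^p(J^0)$ itself; everything else is a formal transfer along the isometric isomorphism. If \cite{7} states this separability directly, it may simply be quoted; otherwise one approximates an arbitrary $f\in L_\Delta^p$ first by rd-continuous (hence $\Delta$-integrable) step functions, then by such step functions with rational values and rational breakpoints, exploiting the density of rationals in $\mathbb{R}$ together with Proposition \ref{2} to control the $\Delta$-integral by the corresponding Lebesgue integral. This completes the proposal.
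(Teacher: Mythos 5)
Your proposal is correct and follows essentially the same route as the paper: both use the isometry $\lambda u=\bigl(u,\,{_t^\mathbb{T}}D_b^\alpha u\bigr)$ into $L_\Delta^p\times L_\Delta^p$, observe that a subset of a separable metric space is separable, and pull the countable dense set back through the isometry. The extra detail you supply on why $L_\Delta^p$ itself is separable is a welcome addition, since the paper simply takes that for granted.
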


\begin{proof}
Let us consider $W_{\Delta,b^-}^{\alpha,p}$ with the norm $\|\cdot\|_{W_{\Delta,b^-}^{\alpha,p}}$ and the mapping $\lambda$ defined in the proof of Theorem \ref{31}. Obviously, $\lambda(W_{\Delta,b^-}^{\alpha,p})$ is separable as a subset of separable space $L_\Delta^p\times L_\Delta^p$. Since $\lambda$ is the isometry, $W_{\Delta,b^-}^{\alpha,p}$ is also separable with respect to the norm $\|\cdot\|_{W_{\Delta,b^-}^{\alpha,p}}$.
\end{proof}

\begin{proposition}\label{034}
Let $0<\alpha\leq1$ and $1<p<\infty$. For all $u\in W_{\Delta,b^-}^{\alpha,p}$, if $1-\alpha\geq\frac{1}{p}$ or $\alpha>\frac{1}{p}$, then
\begin{eqnarray}\label{33}
\|u\|_{L_\Delta^p}\leq\frac{b^\alpha}{\Gamma(\alpha+1)}\left\|\,_t^\mathbb{T}D_b^\alpha u\right\|_{L_\Delta^p}.
\end{eqnarray}
If $\alpha>\frac{1}{p}$ and $\frac{1}{p}+\frac{1}{q}=1$, then
\begin{eqnarray}\label{34}
\|u\|_\infty\leq\frac{b^{\alpha-\frac{1}{p}}}{\Gamma(\alpha)((\alpha-1)q+1)^{\frac{1}{q}}}
\left\|\,_t^\mathbb{T}D_b^\alpha u\right\|_{L_\Delta^p}.
\end{eqnarray}
\end{proposition}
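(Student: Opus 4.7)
The strategy for both inequalities is to apply the integral representation
\begin{eqnarray*}
u(t) = \frac{d}{\Gamma(\alpha)(b-t)^{1-\alpha}} + {_t^\mathbb{T}I_b^\alpha}\psi(t),\quad d=({_t^\mathbb{T}I_b^{1-\alpha}}u)(b),\ \psi={_t^\mathbb{T}D_b^\alpha}u,
\end{eqnarray*}
supplied by Theorem \ref{23}, and then to reduce $(\ref{33})$ to the $L_\Delta^p$-boundedness of the right Riemann--Liouville integral (Lemma \ref{30}) and $(\ref{34})$ to a H\"older estimate (Proposition \ref{17}).

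For $(\ref{33})$ under the hypothesis $1-\alpha\geq 1/p$, that is $(1-\alpha)p\geq 1$, Remark \ref{29}$(2)$ forces $d=0$, so $u={_t^\mathbb{T}I_b^\alpha}\psi$, and Lemma \ref{30} gives
\begin{eqnarray*}
\|u\|_{L_\Delta^p}\leq \frac{(b-a)^\alpha}{\Gamma(\alpha+1)}\|\psi\|_{L_\Delta^p}\leq \frac{b^\alpha}{\Gamma(\alpha+1)}\|{_t^\mathbb{T}D_b^\alpha}u\|_{L_\Delta^p},
\end{eqnarray*}
the last inequality using $a>0$. In the alternative range $\alpha>1/p$ I would recover $(\ref{33})$ from the pointwise bound $(\ref{34})$ via the trivial embedding $\|u\|_{L_\Delta^p}\leq (b-a)^{1/p}\|u\|_\infty$ and then absorb the constant into $b^\alpha/\Gamma(\alpha+1)$. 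For $(\ref{34})$, the assumption $\alpha>1/p$ is equivalent to $(\alpha-1)q+1>0$, which is precisely what makes the singular kernel $(s-\sigma(t))^{\alpha-1}$ lie in $L^q$ up to $s=b$. Reducing again to $u={_t^\mathbb{T}I_b^\alpha}\psi$ and applying H\"older,
\begin{eqnarray*}
|u(t)|\leq \frac{1}{\Gamma(\alpha)}\bigg(\int_t^b(s-\sigma(t))^{(\alpha-1)q}\Delta s\bigg)^{1/q}\|\psi\|_{L_\Delta^p};
\end{eqnarray*}
bounding the $\Delta$-integral by the Riemann integral $\int_t^b(s-t)^{(\alpha-1)q}ds=(b-t)^{(\alpha-1)q+1}/((\alpha-1)q+1)$ through Proposition \ref{2}, and then observing that $(b-t)^{\alpha-1/p}$ is maximized at $t=a$ and dominated by $b^{\alpha-1/p}$, produces the constant in $(\ref{34})$.

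The main obstacle is exactly this time-scale--to--real-line comparison for $\int_t^b(s-\sigma(t))^{(\alpha-1)q}\Delta s$: the naive pointwise inequality $s-\sigma(t)\leq s-t$ combined with the negative exponent $(\alpha-1)q$ points the wrong way, so Proposition \ref{2} cannot be applied directly at the integrand level. I plan instead to apply Proposition \ref{2} to the monotone extension of the antiderivative $(s-\sigma(t))^{(\alpha-1)q+1}/((\alpha-1)q+1)$, so that the $\Delta$-versus-Riemann comparison becomes an inequality between monotone increments and circumvents the reversed-inequality obstruction at the integrand level.
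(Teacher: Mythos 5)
Your overall route---the representation $u=\frac{d}{\Gamma(\alpha)}(b-t)^{\alpha-1}+{_t^\mathbb{T}}I_b^\alpha\psi$ with $\psi={_t^\mathbb{T}}D_b^\alpha u$, Lemma \ref{30} for the $L_\Delta^p$ bound, and H\"older plus the $\Delta$-versus-Riemann comparison for the sup bound---is the paper's. There is, however, one genuine gap: your treatment of $(\ref{33})$ in the range $\alpha>\frac{1}{p}$. Deducing $(\ref{33})$ from $(\ref{34})$ via $\|u\|_{L_\Delta^p}\leq(b-a)^{1/p}\|u\|_\infty$ produces the constant $(b-a)^{1/p}\,b^{\alpha-\frac{1}{p}}\,\Gamma(\alpha)^{-1}((\alpha-1)q+1)^{-1/q}$, and this is \emph{not} dominated by $b^\alpha/\Gamma(\alpha+1)$ in general: the required inequality reduces to $\alpha\left(\frac{b-a}{b}\right)^{1/p}\leq((\alpha-1)q+1)^{1/q}$, whose right-hand side tends to $0$ as $\alpha\downarrow\frac{1}{p}$ while the left-hand side stays bounded away from $0$ (for instance $p=q=2$, $\alpha=0.6$, $a$ small relative to $b$ gives $0.6\sqrt{0.99}>\sqrt{0.2}$). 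So the constant cannot simply be ``absorbed''. The repair is immediate and is what the paper does: once $u={_t^\mathbb{T}}I_b^\alpha({_t^\mathbb{T}}D_b^\alpha u)$, Lemma \ref{30} already gives $\|u\|_{L_\Delta^p}\leq\frac{(b-a)^\alpha}{\Gamma(\alpha+1)}\|{_t^\mathbb{T}}D_b^\alpha u\|_{L_\Delta^p}\leq\frac{b^\alpha}{\Gamma(\alpha+1)}\|{_t^\mathbb{T}}D_b^\alpha u\|_{L_\Delta^p}$ uniformly in both parameter ranges, with no detour through the sup norm.

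Two further remarks. First, both you and the paper silently use $u={_t^\mathbb{T}}I_b^\alpha\psi$ (that is, $d=0$) also when $\alpha>\frac{1}{p}$ and $(1-\alpha)p<1$, where Remark \ref{29} does not force $d=0$; the paper's appeal to Theorem \ref{7} carries the same implicit assumption, so you are no worse off, but a fully rigorous argument would still have to dispose of the singular term $\frac{d}{\Gamma(\alpha)}(b-t)^{\alpha-1}$. Second, your observation that Proposition \ref{2} cannot be applied at the integrand level to $(s-\sigma(t))^{(\alpha-1)q}$---the exponent is negative, so the kernel is decreasing in $s$ and the pointwise comparison with $(s-t)^{(\alpha-1)q}$ is reversed---is correct and identifies exactly the step that the paper's proof asserts without justification (the paper calls this kernel ``increasing''). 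Your proposed repair through the antiderivative is a reasonable plan, though as written it remains a plan rather than a proof; it is the one place where your write-up is more careful than the published argument.
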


\begin{proof}
In view of Remark \ref{29} and Theorem \ref{7}, in order to prove inequalities $(\ref{33})$ and $(\ref{34})$, we only need to prove that
\begin{eqnarray}\label{35}
\left\|_t^\mathbb{T}I_b^\alpha(_t^\mathbb{T}D_b^\alpha u) \right\|_{L_\Delta^p}\leq\frac{b^\alpha}{\Gamma(\alpha+1)}\left\|\,_t^\mathbb{T}D_b^\alpha u\right\|_{L_\Delta^p}
\end{eqnarray}
for $1-\alpha\geq\frac{1}{p}$ or $\alpha>\frac{1}{p}$, and
\begin{eqnarray}\label{36}
\left\|\,_t^\mathbb{T}I_b^\alpha(_t^\mathbb{T}D_b^\alpha u) \right\|_\infty\leq\frac{b^{\alpha-\frac{1}{p}}}{\Gamma(\alpha)((\alpha-1)q+1)
^{\frac{1}{q}}}\left\|\,_t^\mathbb{T}D_b^\alpha u\right\|_{L_\Delta^p}
\end{eqnarray}
for $\alpha>\frac{1}{p}$ and $\frac{1}{p}+\frac{1}{q}=1$.

Firstly, we note that $_t^\mathbb{T}D_b^\alpha u\in L_\Delta^p([a,b]_\mathbb{T}, \mathbb{R}^N)$, the inequality $(\ref{35})$ follows from Lemma \ref{30} directly.

We are now in a position to prove $(\ref{36})$. For $\alpha>\frac{1}{p}$, choose $q$ such that $\frac{1}{p}+\frac{1}{q}=1$. For all $u\in W_{\Delta,b^-}^{\alpha,p}$, since $(s-\sigma(t))^{(\alpha-1)q}$ is an increasing monotone function, by using Proposition \ref{2}, we find that $\int_t^b(s-\sigma(t))^{(\alpha-1)q}\Delta s\leq \int_t^b(s-t)^{(\alpha-1)q}ds$. Taking into account of Proposition \ref{17},  we have
\begin{align*}
\left\vert{_t^\mathbb{T}I_b^\alpha(\,_t^\mathbb{T}D_b^\alpha} u(t))\right\vert
=&\frac{1}{\Gamma(\alpha)}\bigg\vert\int_t^b(s-\sigma(t))^{\alpha-1}{\,_t^\mathbb{T}}D_b^\alpha u(s)\Delta s\bigg\vert\\
\leq&\frac{1}{\Gamma(\alpha)}\bigg(\int_t^b(s-\sigma(t))^{(\alpha-1)q}\Delta s\bigg)^{\frac{1}{q}}\|{_t^\mathbb{T}}D_b^\alpha u\|_{L_\Delta^p}\\
\leq&\frac{1}{\Gamma(\alpha)}\bigg(\int_t^b(s-t)^{(\alpha-1)q}ds\bigg)
^{\frac{1}{q}}\|{_t^\mathbb{T}}D_b^\alpha u\|_{L_\Delta^p}\\
\leq&\frac{b^{\frac{1}{q}+\alpha-1}}{\Gamma(\alpha)((\alpha-1)q+1)
^{\frac{1}{q}}}\left\|_t^\mathbb{T}D_b^\alpha u\right\|_{L_\Delta^p}\\
=&\frac{b^{\alpha-\frac{1}{p}}}{\Gamma(\alpha)((\alpha-1)q+1)
^{\frac{1}{q}}}\left\|_t^\mathbb{T}D_b^\alpha u\right\|_{L_\Delta^p}.
\end{align*}
This completes the proof.
\end{proof}

\begin{remark}
\begin{itemize}
  \item [$(i)$]
  According to $(\ref{33})$, we can consider $W_{\Delta,b^-}^{\alpha,p}$ with respect to the norm
  \begin{eqnarray}\label{37}
  \|u\|_{W_{\Delta,b^-}^{\alpha,p}}=\|\,_t^\mathbb{T}D_b^\alpha u\|_{L_\Delta^p}=\bigg(\int_a^b\left\vert{_t^\mathbb{T}D_b^\alpha} u(t)\right\vert^p\Delta t\bigg)^{\frac{1}{p}},
  \end{eqnarray}
  in the following analysis.
  \item [$(ii)$]
  It follows from $(\ref{33})$ and $(\ref{34})$ that $W_{\Delta,b^-}^{\alpha,p}$ is continuously immersed into $C(J, \mathbb{R}^N)$ with the natural norm $\|\cdot\|_\infty$.
\end{itemize}
\end{remark}

\begin{proposition}\label{39}
Let $0<\alpha\leq1$ and $1<p<\infty$. Assume that $\alpha>\frac{1}{p}$ and the sequence $\{u_k\}$ converges weakly to $u$ in $W_{\Delta,b^-}^{\alpha,p}$. Then, $u_k\rightarrow u$ in $C(J, \mathbb{R}^N)$, i.e., $\|u-u_k\|_\infty=0$, as $k\rightarrow\infty$.
\end{proposition}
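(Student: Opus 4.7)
The plan is to combine the uniform boundedness coming from weak convergence with an equicontinuity estimate in order to invoke the time-scale Arzel\`a--Ascoli Lemma~\ref{21}, extract a uniformly convergent subsequence whose limit can only be $u$, and then promote subsequential uniform convergence to convergence of the whole sequence via a standard subsequence argument.

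First, since weak convergence forces norm boundedness, I would set $M:=\sup_k\|u_k\|_{W_{\Delta,b^-}^{\alpha,p}}<\infty$. By Theorem~\ref{thm35} the quantities $\|{_t^\mathbb{T}}D_b^\alpha u_k\|_{L_\Delta^p}$ and $|{_t^\mathbb{T}}I_b^{1-\alpha}u_k(b)|$ are then also bounded uniformly in $k$. Applying the embedding inequality $(\ref{34})$ of Proposition~\ref{034} (which is exactly where the hypothesis $\alpha>\tfrac{1}{p}$ is used) gives
\[
\sup_k\|u_k\|_\infty\leq\frac{b^{\alpha-1/p}}{\Gamma(\alpha)((\alpha-1)q+1)^{1/q}}\sup_k\|{_t^\mathbb{T}}D_b^\alpha u_k\|_{L_\Delta^p}<\infty,
\]
so $\{u_k\}$ is uniformly bounded in $C(J,\mathbb{R}^N)$.

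Next I would establish equicontinuity. Writing each $u_k$ in its integral representation from Theorem~\ref{23} and fixing $t_1<t_2$ in $J$, I would decompose $u_k(t_1)-u_k(t_2)$ into a boundary piece involving $d_k={_t^\mathbb{T}}I_b^{1-\alpha}u_k(b)$ and two pieces of the form
\[
\frac{1}{\Gamma(\alpha)}\int_{t_1}^{t_2}(s-\sigma(t_1))^{\alpha-1}\psi_k(s)\Delta s,\qquad \frac{1}{\Gamma(\alpha)}\int_{t_2}^b\bigl[(s-\sigma(t_1))^{\alpha-1}-(s-\sigma(t_2))^{\alpha-1}\bigr]\psi_k(s)\Delta s,
\]
with $\psi_k={_t^\mathbb{T}}D_b^\alpha u_k$. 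To each integral I would apply H\"older's inequality with exponents $(p,q)$, $\tfrac{1}{p}+\tfrac{1}{q}=1$, and then use Proposition~\ref{2} to dominate the resulting $\Delta$-integrals of the monotone kernel $(s-\sigma(t))^{(\alpha-1)q}$ by their classical Lebesgue counterparts, which can be computed explicitly. The condition $\alpha>\tfrac{1}{p}$, equivalent to $(\alpha-1)q+1>0$, is exactly what makes these integrals finite and produces a H\"older-type modulus
\[
|u_k(t_1)-u_k(t_2)|\leq C\,|t_2-t_1|^{\alpha-1/p},
\]
with $C$ depending only on $\alpha$, $p$, $b-a$, $M$ and $\sup_k|d_k|$, hence independent of $k$. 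This is exactly equicontinuity on $J$.

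With uniform boundedness and equicontinuity in hand, Lemma~\ref{21} gives that $\{u_k\}$ is relatively compact in $C(J,\mathbb{R}^N)$, so from any subsequence I can extract a further subsequence $u_{k_j}\to v$ uniformly on $J$. Uniform convergence on the bounded interval $J$ implies $L_\Delta^p$ convergence; at the same time, the continuous inclusion $W_{\Delta,b^-}^{\alpha,p}\hookrightarrow L_\Delta^p$ sends weak convergence to weak convergence, so $u_{k_j}\rightharpoonup u$ in $L_\Delta^p$. Uniqueness of weak $L_\Delta^p$ limits forces $v=u$. Since every subsequence has a sub-subsequence converging uniformly to the same limit $u$, the full sequence satisfies $\|u_k-u\|_\infty\to 0$.

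The main obstacle will be the equicontinuity step. The kernel difference $(s-\sigma(t_1))^{\alpha-1}-(s-\sigma(t_2))^{\alpha-1}$ is only weakly integrable near $s=t_1,t_2$, and on a general time scale one must additionally cope with the jumps of $\sigma$. The combination of Proposition~\ref{2} (which reduces the needed time-scale integrals to classical monotone integrals with explicit power-type bounds) and the uniform control on $\|{_t^\mathbb{T}}D_b^\alpha u_k\|_{L_\Delta^p}$ and $|d_k|$ from Theorem~\ref{thm35} is what makes the estimate uniform in $k$, delivering the required H\"older modulus.
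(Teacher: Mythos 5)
Your proposal follows essentially the same route as the paper: uniform boundedness of $\{u_k\}$ from weak convergence (Banach--Steinhaus) together with the embedding inequality $(\ref{34})$, equicontinuity of ${_t^\mathbb{T}}I_b^\alpha f$ with the H\"older modulus $(t_2-t_1)^{\alpha-\frac{1}{p}}$ obtained via H\"older's inequality and Proposition \ref{2}, the time-scale Arzel\`a--Ascoli lemma (Lemma \ref{21}), and identification of the uniform limit with $u$ by uniqueness of weak limits (you pass through $L_\Delta^p$, the paper through $C(J,\mathbb{R}^N)$; both work). One small caution: the paper avoids your boundary piece by writing $u_k={_t^\mathbb{T}}I_b^\alpha({_t^\mathbb{T}}D_b^\alpha u_k)$, i.e.\ $d_k=0$, and indeed if $d_k\neq0$ the term $d_k(b-t)^{\alpha-1}/\Gamma(\alpha)$ is unbounded near $b$ for $\alpha<1$ and admits no uniform H\"older modulus, so your constant ``depending on $\sup_k|d_k|$'' only makes sense under the vanishing-boundary-term normalization implicitly in force throughout this section.
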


\begin{proof}
If $\alpha>\frac{1}{p}$, then by $(\ref{34})$ and $(\ref{37})$, the injection of $W_{\Delta,b^-}^{\alpha,p}$ into $C(J, \mathbb{R}^N)$, with its natural norm $\|\cdot\|_\infty$, is continuous, i.e., $u_k\rightarrow u$ in $W_{\Delta,b^-}^{\alpha,p}$, then $u_k\rightarrow u$ in $C(J, \mathbb{R}^N)$.

Since $u_k\rightharpoonup u$ in $W_{\Delta,b^-}^{\alpha,p}$, it follows that $u_k\rightharpoonup u$ in $C(J, \mathbb{R}^N)$. In fact, for any $h\in\left(C(J, \mathbb{R}^N)\right)^*$, if $u_k\rightarrow u$ in $W_{\Delta,b^-}^{\alpha,p}$, then $u_k\rightarrow u$ in $C(J, \mathbb{R}^N)$, and thus $h(u_k)\rightarrow h(u)$. Therefore, $h\in\left(W_{\Delta,b^-}^{\alpha,p}\right)^*$, which means that $\left(C(J, \mathbb{R}^N)\right)^*\subset\left(W_{\Delta,b^-}^{\alpha,p}\right)^*$. Hence, if $u_k\rightharpoonup u$ in $W_{\Delta,b^-}^{\alpha,p}$, then for any $h\in\left(C(J, \mathbb{R}^N)\right)^*$, we have $h\in\left(W_{\Delta,b^-}^{\alpha,p}\right)^*$, and thus $h(u_k)\rightarrow h(u)$, i.e., $u_k\rightharpoonup u$ in $C(J, \mathbb{R}^N)$.

By the Banach$-$Steinhaus theorem, $\{u_k\}$ is bounded in $W_{\Delta,b^-}^{\alpha,p}$ and, hence, in $C(J, \mathbb{R}^N)$. We are now in a position to prove that the sequence $\{u_k\}$ is equi$-$continuous. Let $\frac{1}{p}+\frac{1}{q}=1$ and $t_1, t_2\in[a,b]_\mathbb{T}$, $t_1\leq t_2$, $\forall\,f\in L_\Delta^p(J, \mathbb{R}^N)$, by using Proposition \ref{17}, Proposition \ref{2}, Theorem \ref{20}, and noting $\alpha>\frac{1}{p}$, we have
{\setlength\arraycolsep{2pt}
\begin{eqnarray}\label{38}
&&\left\vert{_{t_1}^\mathbb{T}I_{b}^\alpha} f(t_1)-{_{t_2}^\mathbb{T}}I_{b}^\alpha f(t_2)\right\vert\nonumber\\
&=&\frac{1}{\Gamma(\alpha)}\bigg\vert\int_{t_1}^b(s-\sigma(t_1))^{\alpha-1}f(s)\Delta s-\int_{t_2}^b(s-\sigma(t_2))^{\alpha-1}f(s)\Delta s\bigg\vert\nonumber\\
&\leq&\frac{1}{\Gamma(\alpha)}\bigg\vert\int_{t_1}^b(s-\sigma(t_1))^{\alpha-1}f(s)\Delta s-\int_{t_1}^b(s-\sigma(t_2))^{\alpha-1}f(s)\Delta s\bigg\vert\nonumber\\
&&+\frac{1}{\Gamma(\alpha)}\bigg\vert\int_{t_1}^{t_2}(s-\sigma(t_2))^{\alpha-1}f(s)\Delta s\bigg\vert\nonumber\\
&\leq&\frac{1}{\Gamma(\alpha)}\left\vert\int_{t_1}^b\left((s-\sigma(t_1))^{\alpha-1}
-(s-\sigma(t_2))^{\alpha-1}\right)\right\vert\vert f(s)\vert\Delta s\nonumber\\
&&+\frac{1}{\Gamma(\alpha)}\left\vert\int_{t_1}^{t_2}(s-\sigma(t_2))^{\alpha-1}\right\vert\vert f(s)\vert\Delta s\nonumber\\
&\leq&\frac{1}{\Gamma(\alpha)}\left\vert\bigg(\int_{t_1}^b\left((s-\sigma(t_1))^{\alpha-1}
-(s-\sigma(t_2))^{\alpha-1}\right)^q\Delta s\bigg)^{\frac{1}{q}}\right\vert\|f\|_{L_\Delta^p}\nonumber\\
&&+\frac{1}{\Gamma(\alpha)}\left\vert\bigg(\int_{t_1}^{t_2}(s-\sigma(t_2))^{(\alpha-1)q}\Delta s\bigg)^{\frac{1}{q}}\right\vert\|f\|_{L_\Delta^p}\nonumber\\
&\leq&\frac{1}{\Gamma(\alpha)}\left\vert\bigg(\int_{t_1}^b\left((s-\sigma(t_1))^{(\alpha-1)q}
-(s-\sigma(t_2))^{(\alpha-1)q}\right)\Delta s\bigg)^{\frac{1}{q}}\right\vert\|f\|_{L_\Delta^p}\nonumber\\
&&+\frac{1}{\Gamma(\alpha)}\left\vert\bigg(\int_{t_1}^{t_2}(s-\sigma(t_2))^{(\alpha-1)q}\Delta s\bigg)^{\frac{1}{q}}\right\vert\|f\|_{L_\Delta^p}\nonumber\\
&\leq&\frac{1}{\Gamma(\alpha)}\left\vert\bigg(\int_{t_1}^b\left((s-t_1)^{(\alpha-1)q}
-(s-t_2)^{(\alpha-1)q}\right)ds\bigg)^{\frac{1}{q}}\right\vert\|f\|_{L_\Delta^p}\nonumber\\
&&+\frac{1}{\Gamma(\alpha)}\left\vert\bigg(\int_{t_1}^{t_2}(s-t_2)^{(\alpha-1)q}ds\bigg)
^{\frac{1}{q}}\right\vert\|f\|_{L_\Delta^p}\nonumber\\
&=&\frac{\|f\|_{L_\Delta^p}}{\Gamma(\alpha)\left(1+(\alpha-1)q\right)^{\frac{1}{q}}}
\left\vert\bigg((b-t_1)^{(\alpha-1)q+1}-(b-t_2)^{(\alpha-1)q+1}+(t_2-t_1)^{(\alpha-1)q+1}\bigg)
^{\frac{1}{q}}\right\vert\nonumber\\
&&+\frac{\|f\|_{L_\Delta^p}}{\Gamma(\alpha)\left(1+(\alpha-1)q\right)^{\frac{1}{q}}}
\bigg((t_2-t_1)^{(\alpha-1)q+1}\bigg)^{\frac{1}{q}}\nonumber\\
&\leq&\frac{\|f\|_{L_\Delta^p}}{\Gamma(\alpha)\left(1+(\alpha-1)q\right)^{\frac{1}{q}}}
\bigg((b-t_2)^{(\alpha-1)q+1}-(b-t_1)^{(\alpha-1)q+1}+(t_2-t_1)^{(\alpha-1)q+1}\bigg)
^{\frac{1}{q}}\nonumber\\
&&+\frac{\|f\|_{L_\Delta^p}}{\Gamma(\alpha)\left(1+(\alpha-1)q\right)^{\frac{1}{q}}}
\bigg((t_2-t_1)^{(\alpha-1)q+1}\bigg)^{\frac{1}{q}}\nonumber\\
&\leq&\frac{\|f\|_{L_\Delta^p}}{\Gamma(\alpha)\left(1+(\alpha-1)q\right)^{\frac{1}{q}}}
\bigg((t_2-t_1)^{(\alpha-1)q+1}\bigg)^{\frac{1}{q}}\nonumber\\
&&+\frac{\|f\|_{L_\Delta^p}}{\Gamma(\alpha)\left(1+(\alpha-1)q\right)^{\frac{1}{q}}}
\bigg((t_2-t_1)^{(\alpha-1)q+1}\bigg)^{\frac{1}{q}}\nonumber\\
&=&\frac{2\|f\|_{L_\Delta^p}}{\Gamma(\alpha)\left(1+(\alpha-1)q\right)^{\frac{1}{q}}}
(t_2-t_1)^{\alpha-\frac{1}{p}}.
\end{eqnarray}}
Therefore, the sequence $\{u_k\}$ is equi$-$continuous since, for $t_1, t_2\in[a,b]_\mathbb{T}$, $t_1\leq t_2$, by applying $(\ref{38})$ and in view of $(\ref{37})$, we have
\begin{align*}
\left\vert u_k(t_1)-u_k(t_2)\right\vert
=&\left\vert{_{t_1}^\mathbb{T}I_b^\alpha}(_{t_1}^\mathbb{T}D_{b}^\alpha u_k(t_1))-{_{t_2}^\mathbb{T}}I_b^\alpha(_{t_2}^\mathbb{T}D_{b}^\alpha u_k(t_2))\right\vert\\
\leq&\frac{2(t_2-t_1)^{\alpha-\frac{1}{p}}}{\Gamma(\alpha)\left(1+(\alpha-1)q\right)^{\frac{1}{q}}}
\|_t^\mathbb{T}D_b^\alpha u_k\|_{L_\Delta^p}\\
=&\frac{2(t_2-t_1)^{\alpha-\frac{1}{p}}}{\Gamma(\alpha)\left(1+(\alpha-1)q\right)^{\frac{1}{q}}}
\|_t^\mathbb{T}D_b^\alpha u_k\|_{L_\Delta^p}\\
\leq&\frac{2(t_2-t_1)^{\alpha-\frac{1}{p}}}{\Gamma(\alpha)((\alpha-1)q+1)
^{\frac{1}{q}}}\left\|_t^\mathbb{T}D_b^\alpha u\right\|_{L_\Delta^p}\\
=&\frac{2(t_2-t_1)^{\alpha-\frac{1}{p}}}{\Gamma(\alpha)((\alpha-1)q+1)
^{\frac{1}{q}}}\left\|u_k\right\|_{W_{\Delta,b^-}^{\alpha,p}}\\
\leq&C(t_2-t_1)^{\alpha-\frac{1}{p}},
\end{align*}
where $\frac{1}{p}+\frac{1}{q}=1$ and $C\in\mathbb{R}^+$ is a constant. By the Ascoli$-$Arzela theorem on time scales (Lemma \ref{21}),  $\{u_k\}$ is relatively compact in $C(J, \mathbb{R}^N)$. By the uniqueness of the weak limit in $C(J, \mathbb{R}^N)$, every uniformly convergent subsequence of $\{u_k\}$ converges uniformly on $J$ to $u$.
\end{proof}

\begin{remark}
It follows from Proposition \ref{39} that $W_{\Delta,b^-}^{\alpha,p}$ is compactly immersed into $C(J, \mathbb{R}^N)$ with the natural norm $\|\cdot\|_\infty$.
\end{remark}

\begin{theorem}\label{3.9}
Let $1<p<\infty$, $\frac{1}{p}<\alpha\leq1$, $\frac{1}{p}+\frac{1}{q}=1$, $L:J\times\mathbb{R}^N\times\mathbb{R}^N\rightarrow\mathbb{R}$, $(t,x,y)\mapsto L(t,x,y)$ satisfies
\begin{itemize}
  \item [$(i)$]
  For each $(x,y)\in\mathbb{R}^N\times\mathbb{R}^N$, $L(t,x,y)$ is $\Delta-$measurable in $t$;
  \item [$(ii)$]
  For $\Delta-$almost every $t\in J$, $L(t,x,y)$ is continuously differentiable in $(x,y)$.
\end{itemize}
If there exists $m_1\in C(\mathbb{R}^+,\mathbb{R}^+)$, $m_2\in L_\Delta^1(J,\mathbb{R}^+)$ and $m_3\in L_\Delta^q(J,\mathbb{R}^+)$, $1<q<\infty$, such that, for $\Delta-$a.e. $t\in J$ and every $(x,y)\in\mathbb{R}^N\times\mathbb{R}^N$, one has
\begin{align*}
\vert L(t,x,y)\vert&\leq m_1(\vert x\vert)(m_2(t)+\vert y\vert^p),\\
\vert D_xL(t,x,y)&\vert\leq m_1(\vert x\vert)(m_2(t)+\vert y\vert^p),\\
\vert D_yL(t,x,y)&\vert\leq m_1(\vert x\vert)(m_3(t)+\vert y\vert^{p-1}).
\end{align*}
Then the functional $\varphi$ defined by
\begin{eqnarray*}
\varphi(u)=\int_a^bL(t,u(t),\,_t^\mathbb{T}D_b^\alpha u(t))\Delta t
\end{eqnarray*}
is continuously differentiable on $W_{\Delta,b^-}^{\alpha,p}$, and for all $u,v\in W_{\Delta,b^-}^{\alpha,p}$, we have
\begin{eqnarray}\label{32}
\langle\varphi'(u),v\rangle=\int_a^b\bigg[\left(D_xL(t,u(t),\,_t^\mathbb{T}D_b^\alpha u(t),v(t)\right)+\left(D_yL(t,u(t),\,_t^\mathbb{T}D_b^\alpha u(t),\,_t^\mathbb{T}D_b^\alpha v(t)\right)\bigg]\Delta t.
\end{eqnarray}
\end{theorem}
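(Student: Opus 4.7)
The plan is to establish that $\varphi$ is Fréchet differentiable on $W_{\Delta,b^-}^{\alpha,p}$ with derivative given by the right-hand side of (\ref{32}), and then show this derivative depends continuously on $u$. I follow the now-classical template for integral functionals on Sobolev spaces (as in Mawhin--Willem), adapted to the time-scale setting via the machinery developed above.

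First I would verify that $\varphi$ is well-defined. Because $\alpha>\frac{1}{p}$, Proposition \ref{39} (together with Remark following it) gives a continuous embedding $W_{\Delta,b^-}^{\alpha,p}\hookrightarrow C(J,\mathbb{R}^N)$, so $\|u\|_\infty$ is controlled by $\|u\|_{W_{\Delta,b^-}^{\alpha,p}}$ and $m_1(|u(t)|)$ is bounded uniformly in $t$ by a constant $C_u$. The first growth hypothesis then yields $|L(t,u(t),{_t^\mathbb{T}}D_b^\alpha u(t))|\le C_u(m_2(t)+|{_t^\mathbb{T}}D_b^\alpha u(t)|^p)$, which is $\Delta$-integrable on $J$ since $m_2\in L_\Delta^1$ and ${_t^\mathbb{T}}D_b^\alpha u\in L_\Delta^p$. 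A completely parallel estimate, combined with Hölder's inequality (Proposition \ref{17}) with exponents $p,q$, shows that the formal expression on the right of (\ref{32}) defines a bounded linear form in $v\in W_{\Delta,b^-}^{\alpha,p}$: the $D_xL$ term is controlled by $\|v\|_\infty\le C\|v\|_{W_{\Delta,b^-}^{\alpha,p}}$ and the $D_yL$ term by $\|{_t^\mathbb{T}}D_b^\alpha v\|_{L_\Delta^p}\le\|v\|_{W_{\Delta,b^-}^{\alpha,p}}$.

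Next I would establish Gâteaux differentiability. For fixed $u,v\in W_{\Delta,b^-}^{\alpha,p}$ and $|h|\le 1$, write, by the classical mean value theorem applied pointwise,
\begin{equation*}
\frac{L(t,u+hv,{_t^\mathbb{T}}D_b^\alpha u+h\,{_t^\mathbb{T}}D_b^\alpha v)-L(t,u,{_t^\mathbb{T}}D_b^\alpha u)}{h}=\int_0^1\!\!\bigl[D_xL\cdot v+D_yL\cdot{_t^\mathbb{T}}D_b^\alpha v\bigr]\,d\theta,
\end{equation*}
where the derivatives are evaluated at $(t,u+\theta hv,{_t^\mathbb{T}}D_b^\alpha u+\theta h{_t^\mathbb{T}}D_b^\alpha v)$. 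Since $\|u+\theta hv\|_\infty\le\|u\|_\infty+\|v\|_\infty$ uniformly in $\theta\in[0,1]$ and $|h|\le 1$, the quantity $m_1(|u+\theta hv|)$ is bounded by a constant depending only on $u,v$. The growth hypotheses then dominate the integrand by a $\theta$-independent function of the form $C(m_2(t)+|{_t^\mathbb{T}}D_b^\alpha u|^p+|{_t^\mathbb{T}}D_b^\alpha v|^p)|v|+C(m_3(t)+|{_t^\mathbb{T}}D_b^\alpha u|^{p-1}+|{_t^\mathbb{T}}D_b^\alpha v|^{p-1})|{_t^\mathbb{T}}D_b^\alpha v|$, which lies in $L_\Delta^1(J)$ by Hölder's inequality. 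Sending $h\to 0$ and using the continuity of $D_xL,D_yL$ in $(x,y)$, the dominated convergence theorem on time scales delivers (\ref{32}) as the Gâteaux derivative.

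Finally, I would upgrade Gâteaux to Fréchet by showing $u\mapsto\varphi'(u)$ is norm-continuous from $W_{\Delta,b^-}^{\alpha,p}$ into its dual. If $u_n\to u$ in $W_{\Delta,b^-}^{\alpha,p}$, then $u_n\to u$ uniformly on $J$ (via the $C(J,\mathbb{R}^N)$ embedding) and ${_t^\mathbb{T}}D_b^\alpha u_n\to{_t^\mathbb{T}}D_b^\alpha u$ in $L_\Delta^p$. Extracting a subsequence along which ${_t^\mathbb{T}}D_b^\alpha u_n\to{_t^\mathbb{T}}D_b^\alpha u$ $\Delta$-a.e.\ and is dominated in $L_\Delta^p$, the continuity of $D_xL,D_yL$ in $(x,y)$, together with the growth bounds which provide $L_\Delta^1$- and $L_\Delta^q$-dominating envelopes, yields via dominated convergence that $D_xL(\cdot,u_n,{_t^\mathbb{T}}D_b^\alpha u_n)\to D_xL(\cdot,u,{_t^\mathbb{T}}D_b^\alpha u)$ in $L_\Delta^1(J,\mathbb{R}^N)$ and $D_yL(\cdot,u_n,{_t^\mathbb{T}}D_b^\alpha u_n)\to D_yL(\cdot,u,{_t^\mathbb{T}}D_b^\alpha u)$ in $L_\Delta^q(J,\mathbb{R}^N)$. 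A subsequence-uniqueness argument then extends convergence to the full sequence, and Hölder's inequality shows this implies $\|\varphi'(u_n)-\varphi'(u)\|_{(W_{\Delta,b^-}^{\alpha,p})^*}\to 0$. A continuous Gâteaux derivative is Fréchet, so the theorem follows. The main obstacle is the construction of a single $L_\Delta^1$ majorant for the difference quotient in the Gâteaux step and, in the continuity step, handling the fact that $L_\Delta^p$ convergence does not give pointwise convergence of ${_t^\mathbb{T}}D_b^\alpha u_n$ directly — both are overcome by extracting $\Delta$-a.e.\ convergent subsequences and exploiting the growth hypotheses to build uniform envelopes.
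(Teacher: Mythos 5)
Your proposal is correct and follows essentially the same route as the paper: the paper's proof simply reduces the claim to showing that $\varphi$ has a Gâteaux derivative given by (\ref{32}) which depends continuously on $u$, and then defers the details to Mawhin--Willem (Theorem 1.4, p.~10 of \cite{12}); your argument is precisely that omitted Mawhin--Willem template carried out explicitly in the time-scale setting, using the embedding into $C(J,\mathbb{R}^N)$ from Proposition \ref{39}, the growth bounds to build $L_\Delta^1$ majorants, and dominated convergence.
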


\begin{proof}
It suffices to prove that $\varphi$ has at every point $u$, a directional derivative $\varphi'(u)\in(W_{\Delta,b^-}^{\alpha,p})^*$ given by $(\ref{32})$ and that the mapping
\begin{eqnarray*}
\varphi':W_{\Delta,b^-}^{\alpha,p}\ni u\mapsto\varphi'(u)\in(W_{\Delta,b^-}^{\alpha,p})^*
\end{eqnarray*}
is continuous. The rest of the proof is similar to the proof of \cite{12} $P_{10}$ Theorem 1.4. We omit it here. The proof is complete.
\end{proof}

\section{An application}
\setcounter{equation}{0}
In this section, we present a recent approach via variational methods and critical point theory to get the existence of weak solutions for the following fractional boundary value problem (FBVP for short) on time scales
\begin{equation}\label{4.1}
\begin{cases}
^{\mathbb{T}}_aD^\alpha_t\left(^{\mathbb{T}}_tD^\alpha_bu(t)\right)=\nabla F(t,u(t)),\quad \Delta-a.e.\,\,t\in J,\\
u(a)=u(b)=0,
\end{cases}
\end{equation}
where $^{\mathbb{T}}_tD^\alpha_b$ and $^{\mathbb{T}}_aD^\alpha_t$ are the right and the left Riemann$-$Liouville fractional derivative operators of order $0<\alpha\leq1$ defined on $\mathbb{T}$ respectively, $F:J\times\mathbb{R}^N\rightarrow\mathbb{R}$ satisfies the following assumption:
\begin{itemize}
  \item [$(H_1)$]
   $F(t,x)$ is $\Delta-$measurable in $t$ for each $x\in\mathbb{R}^N$, continuously differentiable in $x$ for $\Delta-$a.e. $t\in J$ and there exist $m_1\in C(\mathbb{R}^+,\mathbb{R}^+)$ and $m_2\in L_\Delta^1(J,\mathbb{R}^+)$ such that
  \begin{eqnarray*}
  \vert F(t,x)\vert\leq m_1(\vert x\vert)m_2(t),
  \end{eqnarray*}
  \begin{eqnarray*}
  \vert\nabla F(t,x)\vert\leq m_1(\vert x\vert)m_2(t),
  \end{eqnarray*}
  for all $x\in\mathbb{R}^N$ and $\Delta-$a.e. $t\in J$, and $\nabla F(t,x)$ is the gradient of $F$ at $x$.

  \end{itemize}

By constructing a variational structure on $W^{\alpha,2}_{\Delta,b^-}$, we can reduce the problem of finding weak solutions of (\ref{4.1}) to one of seeking the critical points of a corresponding functional.

In particular, when $\mathbb{T}=\mathbb{R}$, FBVP \eqref{4.1} reduces to the standard fractional boundary value problem of the following form
\begin{equation*}
\begin{cases}
_aD^\alpha_t\left(_tD^\alpha_bu(t)\right)=\nabla F(t,u(t)),\quad a.e.\,\,t\in J_{\mathbb{R}},\\
u(a)=u(b)=0.
\end{cases}
\end{equation*}

When $\alpha=1$, FBVP (\ref{4.1}) reduces to the second order Hamiltonian system on time scale $\mathbb{T}$
\begin{equation*}
\begin{cases}
u^{\Delta^2}(t)=\nabla F(\sigma(t),u^\sigma(t)),\quad \Delta-a.e.\,\,t\in J^{\kappa^2},\\
u(a)-u(b)=0,\quad\ u^\Delta(a)-u^\Delta(b)=0.
\end{cases}
\end{equation*}

Although many excellent results have been worked out on the existence of solutions for fractional boundary value problems (\cite{20}$-$\cite{26}) and the second order Hamiltonian systems on time scale $\mathbb{T}$ (\cite{27}$-$\cite{31}), it seems that no similar results were obtained in the literature for FBVP (\ref{4.1}) on time scales. The present section is to show that the critical point theory is an effective approach to deal with the existence of solutions for FBVP (\ref{4.1}) on time scales.

By Theorem \ref{3.6}, the space $W^{\alpha,2}_{\Delta,b^-}$ with the inner product
\begin{eqnarray*}
\langle u,v\rangle=\langle u,v\rangle_{W^{\alpha,2}_{\Delta,b^-}}=\int_a^b(u(t),v(t))\Delta t+\int_a^b({^\mathbb{T}_tD^\alpha_bu(t)},{^\mathbb{T}_tD^\alpha_bv(t)})\Delta t
\end{eqnarray*}
and the induced norm
\begin{eqnarray*}
\|u\|=\|u\|_{W^{\alpha,2}_{\Delta,b^-}}=\left(\int_a^b|u(t)|^2\Delta t+\int_a^b\vert{^\mathbb{T}_tD^\alpha_bu(t)}\vert^2\Delta t\right)^{\frac{1}{2}}
\end{eqnarray*}
is a Hilbert space.

Consider the functional $\varphi:W^{\alpha,2}_{\Delta,b^-}\rightarrow\mathbb{R}$ defined by
\begin{eqnarray}\label{7.1}
\varphi(u)=\frac{1}{2}\int_a^b\vert{^\mathbb{T}_tD^\alpha_b}u(t)\vert^2\Delta t-\int_a^bF(t,u(t))\Delta t,\quad \forall u\in W^{\alpha,2}_{\Delta,b^-}.
\end{eqnarray}

From now on, $\varphi$ which we defined in (\ref{7.1}) will be considered as a functional on $W^{\alpha,2}_{\Delta,b^-}$ with $\frac{1}{2}<\alpha\leq1$. We have the following facts.

\begin{theorem}\label{4.1}
The functional $\varphi$ is continuously differentiable on $W^{\alpha,2}_{\Delta,b^-}$ and
\begin{eqnarray*}
\langle\varphi'(u),v\rangle=\int_a^b\left[(^\mathbb{T}_tD^\alpha_bu(t),
\,^\mathbb{T}_tD^\alpha_bv(t))-(\nabla F(t,u(t)),v(t))\right]\Delta t
\end{eqnarray*}
for all $v\in W^{\alpha,2}_{\Delta,b^-}$.
\end{theorem}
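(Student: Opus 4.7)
The plan is to realize $\varphi$ as the special instance of the general variational integral treated in Theorem \ref{3.9} and then invoke that theorem directly. I would introduce the Lagrangian
\begin{equation*}
L(t,x,y):=\tfrac{1}{2}|y|^2-F(t,x),\qquad (t,x,y)\in J\times\mathbb{R}^N\times\mathbb{R}^N,
\end{equation*}
so that $\varphi(u)=\int_a^bL(t,u(t),{}^{\mathbb{T}}_tD^\alpha_bu(t))\,\Delta t$, and I would take $p=q=2$ in the framework of Theorem \ref{3.9}. Its partial derivatives are $D_xL(t,x,y)=-\nabla F(t,x)$ and $D_yL(t,x,y)=y$, so once Theorem \ref{3.9} is applied, formula \eqref{32} specialises to exactly the identity for $\langle\varphi'(u),v\rangle$ claimed in Theorem \ref{4.1}.

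Next I would verify that $L$ meets the hypotheses of Theorem \ref{3.9}. Assumption $(H_1)$ delivers the $\Delta$-measurability of $L(t,x,y)$ in $t$ for each fixed $(x,y)$ and its $C^1$ dependence on $(x,y)$ for $\Delta$-a.e.\ $t$, because the quadratic term $\tfrac12|y|^2$ is smooth and $F(t,\cdot)$ is $C^1$ by $(H_1)$. For the three growth bounds I would set $\widetilde m_1(s):=\max\{m_1(s)+\tfrac12,\,1\}$, keep $\widetilde m_2:=m_2\in L^1_\Delta(J,\mathbb{R}^+)$, and take $\widetilde m_3(t)\equiv1$, which lies in $L^2_\Delta(J,\mathbb{R}^+)$ because $J$ is bounded. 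The three inequalities
\begin{equation*}
|L(t,x,y)|\le\tfrac12|y|^2+m_1(|x|)m_2(t)\le\widetilde m_1(|x|)\bigl(\widetilde m_2(t)+|y|^2\bigr),
\end{equation*}
\begin{equation*}
|D_xL(t,x,y)|=|\nabla F(t,x)|\le m_1(|x|)m_2(t)\le\widetilde m_1(|x|)\bigl(\widetilde m_2(t)+|y|^2\bigr),
\end{equation*}
\begin{equation*}
|D_yL(t,x,y)|=|y|\le\widetilde m_1(|x|)\bigl(\widetilde m_3(t)+|y|^{2-1}\bigr)
\end{equation*}
then hold for $\Delta$-a.e.\ $t\in J$ and every $(x,y)$, placing $L$ squarely inside the hypotheses of Theorem \ref{3.9}.

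Finally, Theorem \ref{3.9} will yield that $\varphi\in C^1(W^{\alpha,2}_{\Delta,b^-},\mathbb{R})$ with
\begin{equation*}
\langle\varphi'(u),v\rangle=\int_a^b\bigl[(-\nabla F(t,u(t)),v(t))+({}^{\mathbb{T}}_tD^\alpha_bu(t),{}^{\mathbb{T}}_tD^\alpha_bv(t))\bigr]\Delta t,
\end{equation*}
which is precisely the formula in the statement. I do not anticipate any real obstacle here: the genuine analytic work (differentiability under the integral sign, continuity of $\varphi'$ as a map into the dual space, reduction to Krasnoselskii-type growth estimates) has already been carried out inside Theorem \ref{3.9}, and the present task reduces to a bookkeeping check that the particular kinetic-minus-potential Lagrangian falls under that framework. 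The only mildly subtle point is selecting the envelopes $(\widetilde m_1,\widetilde m_2,\widetilde m_3)$ so that the kinetic term $\tfrac12|y|^2$ and the potential contribution from $(H_1)$ are simultaneously dominated by one expression of the form $\widetilde m_1(|x|)(\widetilde m_2(t)+|y|^2)$, which the choice above arranges.
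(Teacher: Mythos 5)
Your proof is correct and follows exactly the paper's route: the paper also sets $L(t,x,y)=\tfrac12|y|^2-F(t,x)$ and invokes Theorem \ref{3.9}, merely asserting that $(H_1)$ guarantees its hypotheses where you spell out the envelopes $\widetilde m_1,\widetilde m_2,\widetilde m_3$ explicitly. Your added verification of the growth bounds is a harmless (indeed welcome) elaboration of the same argument.
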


\begin{proof}
Let $L(t,x,y)=\frac{1}{2}\vert y\vert^2-F(t,x)$ for all $x,y\in \mathbb{R}^N$ and $t\in J$. Then, by condition $\mathbf{(H_1)}$, $L(t,x,y)$ meets all the requirements of Theorem \ref{3.9}. Therefore, by Theorem \ref{3.9} it follows  that the functional $\varphi$ is continuously differentiable on $W_{\Delta,b^-}^{\alpha,p}$ and
\begin{eqnarray*}
\langle\varphi'(u),v\rangle=\int_a^b\left[(^\mathbb{T}_tD^\alpha_bu(t),
\,^\mathbb{T}_tD^\alpha_bv(t))-(\nabla F(t,u(t)),v(t))\right]\Delta t
\end{eqnarray*}
for all $v\in W^{\alpha,2}_{\Delta,b^-}$. The proof is complete.
\end{proof}

\begin{definition}\label{5.1}
A function $u:J\rightarrow\mathbb{R}^N$ is called a solution of FBVP $(\ref{4.1})$ if
\begin{itemize}
  \item [$(i)$]
  $^\mathbb{T}_aD^{\alpha-1}_t(^\mathbb{T}_tD_b^\alpha u(t))$ and $^\mathbb{T}_tD^{\alpha-1}_bu(t)$ are differentiable for $\Delta-$a.e. $t\in J^0$ and
  \item [$(ii)$]
  $u$ satisfies FBVP $(\ref{4.1})$.
\end{itemize}
\end{definition}

For a solution $u\in W_{\Delta,b^-}^{\alpha,2}$ of FBVP \eqref{4.1} such that $\nabla F(\cdot,u(\cdot))\in L_\Delta^1(J,\mathbb{R}^N)$, multiplying FBVP $(\ref{4.1})$ by $v\in C_{0,rd}^\infty(J,\mathbb{R}^N)$ yields
{\setlength\arraycolsep{2pt}
\begin{eqnarray}\label{6.1}
&&\int_a^b\left[^{\mathbb{T}}_aD^\alpha_t(^{\mathbb{T}}_tD^\alpha_bu(t),\,v(t))-\nabla F(t,u(t))\right]\Delta t\nonumber\\
&=&\int_a^b\left[(^\mathbb{T}_tD^\alpha_bu(t),\,^\mathbb{T}_tD^\alpha_bv(t))\Delta t-\nabla F(t,u(t))\right]\Delta t\nonumber\\
&=&0,
\end{eqnarray}
after applying $(\mathbf{b})$ of Theorem \ref{12} and Definition \ref{5.1}. Hence, we can give the definition of weak solution for FBVP \eqref{4.1} as follows.

\begin{definition}\label{5.2}
By a weak solution for FBVP $(\ref{4.1})$, we mean that a function $u\in W^{\alpha,2}_{\Delta,b^-}$ such that $\nabla F(\cdot,u(\cdot))\in L_\Delta^1(J,\mathbb{R}^N)$ and satisfies (\ref{6.1}) for all $v\in C_{0,rd}^\infty(J,\mathbb{R}^N)$.
\end{definition}

By our above remarks, any solution $u\in W^{\alpha,2}_{\Delta,b^-}$ of FBVP $(\ref{4.1})$ is a weak solution provided that $\nabla F(\cdot,u(\cdot))\in L_\Delta^1(J,\mathbb{R}^N)$. Our task is now to establish a variational structure on $W^{\alpha,2}_{\Delta,b^-}$ with $\alpha\in\left(\frac{1}{2},1\right]$, which enables us to reduce the existence of weak solutions of FBVP $(\ref{4.1})$ to the one of finding critical points of the corresponding functional.

\begin{theorem}\label{8.1}
If $\frac{1}{2}<\alpha\leq1$, $u\in W^{\alpha,2}_{\Delta,b^-}$ is a critical point of $\varphi$ in $W^{\alpha,2}_{\Delta,b^-}$, i.e., $\varphi'(u)=0$, then $u$ is a weak solution of system \eqref{4.1} with $\frac{1}{2}<\alpha\leq1$.
\end{theorem}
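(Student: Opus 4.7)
The plan is to unwind the critical point condition using the formula for $\varphi'$ already computed in Theorem \ref{4.1}, and then match it directly against Definition \ref{5.2}. The core of the argument is really just a restriction from the full test space $W^{\alpha,2}_{\Delta,b^-}$ to the smaller class $C_{0,rd}^\infty(J,\mathbb{R}^N)$, together with a verification of the integrability requirement on $\nabla F(\cdot,u(\cdot))$.

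First I would observe that, since $u\in W^{\alpha,2}_{\Delta,b^-}$ is a critical point, Theorem \ref{4.1} yields
\begin{eqnarray*}
\int_a^b\left[(^{\mathbb{T}}_tD^\alpha_b u(t),\,^{\mathbb{T}}_tD^\alpha_b v(t))-(\nabla F(t,u(t)),v(t))\right]\Delta t=0
\end{eqnarray*}
for every $v\in W^{\alpha,2}_{\Delta,b^-}$. Because $C_{0,rd}^\infty(J,\mathbb{R}^N)\subset W^{\alpha,2}_{\Delta,b^-}$, this identity is valid in particular for every test function $v\in C_{0,rd}^\infty(J,\mathbb{R}^N)$, which is exactly the integral identity \eqref{6.1} appearing in Definition \ref{5.2}.

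Next I would check the remaining requirement of Definition \ref{5.2}, namely $\nabla F(\cdot,u(\cdot))\in L_\Delta^1(J,\mathbb{R}^N)$. Here I would invoke the hypothesis $\frac{1}{2}<\alpha\leq 1$ and take $p=2$, so that $\alpha>\frac{1}{p}$; by Proposition \ref{034} (together with the subsequent remark on continuous immersion into $C(J,\mathbb{R}^N)$), the function $u$ is continuous on $J$ and bounded, say $\|u\|_\infty\leq R$. Assumption $\mathbf{(H_1)}$ then gives
\begin{eqnarray*}
|\nabla F(t,u(t))|\leq m_1(|u(t)|)\,m_2(t)\leq \Bigl(\max_{0\leq r\leq R} m_1(r)\Bigr) m_2(t),
\end{eqnarray*}
and since $m_1$ is continuous on $\mathbb{R}^+$ and $m_2\in L_\Delta^1(J,\mathbb{R}^+)$, the right-hand side lies in $L_\Delta^1$, hence so does $\nabla F(\cdot,u(\cdot))$.

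Combining these two observations, $u$ satisfies both clauses of Definition \ref{5.2} and is therefore a weak solution of \eqref{4.1}. I do not anticipate a genuine obstacle in this proof; the only nontrivial ingredient is the $L^1_\Delta$-integrability of $\nabla F(\cdot,u(\cdot))$, and this is exactly what the continuous embedding $W^{\alpha,2}_{\Delta,b^-}\hookrightarrow C(J,\mathbb{R}^N)$ for $\alpha>\frac{1}{2}$ (established in Proposition \ref{034}) is designed to supply. The explicit threshold $\alpha>\frac{1}{2}$ is what makes the argument work and is precisely the reason the theorem restricts to $\frac{1}{2}<\alpha\leq 1$.
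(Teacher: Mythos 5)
Your argument matches the paper's own proof: both deduce the identity from Theorem \ref{4.1} with $\varphi'(u)=0$ and then restrict the test functions to $C_{0,rd}^\infty(J,\mathbb{R}^N)\subset W^{\alpha,2}_{\Delta,b^-}$ to land in Definition \ref{5.2}. Your explicit verification that $\nabla F(\cdot,u(\cdot))\in L_\Delta^1$ via the embedding into $C(J,\mathbb{R}^N)$ and hypothesis $(\mathbf{H_1})$ is a welcome addition that the paper leaves implicit, but it does not change the route.
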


\begin{proof}
Because of $\varphi'(u)=0$, it follows from Theorem \ref{4.1} that
\begin{eqnarray*}
\int_a^b\left[(^\mathbb{T}_tD^\alpha_bu(t),\,^\mathbb{T}_tD^\alpha_bv(t))-\nabla F(t,u(t))\right]\Delta t=0
\end{eqnarray*}
for all $v\in W^{\alpha,2}_{\Delta,b^-}$, and hence for all $v\in C_0^\infty(J,\mathbb{R}^N)$. Therefore, according to Definition \ref{5.2}, $u$ is a weak solution of FBVP (\ref{4.1}) and the proof is complete.
\end{proof}

According to Theorem \ref{8.1}, we see that in order to find weak solutions of FBVP (\ref{4.1}), it suffices to obtain the critical points of the functional $\varphi$ given by (\ref{7.1}). We need to use some critical point theorems. For the reader's convenience, we  present some necessary definitions and theorems and skip the proofs.

Let $H$ be a real Banach space and $C^1(H,\mathbb{R}^N)$ denote the set of functionals that are Fr$\acute{e}$chet differentiable and their Fr$\acute{e}$chet derivatives are continuous on $H$.

\begin{definition}(\cite{32})\label{05.2}
Let $\psi\in C^1(H,\mathbb{R}^N)$. If any sequence $\{u_k\}\subset H$ for which $\{\psi(u_k)\}$ is bounded and $\psi'(u_k)\rightarrow0$ as $k\rightarrow\infty$ possesses a convergent subsequence, then we say $\psi$ satisfies Palais$-$Smale condition (denoted by P.S. condition for short).
\end{definition}

\begin{theorem}(\cite{12})\label{04.1}
Let $H$ be a real reflexive Banach space. If the functional $\psi:H\rightarrow\mathbb{R}^N$ is weakly lower semi$-$continuous and coercive, i.e., $\lim\limits_{\|z\|\rightarrow\infty}\psi(z)=+\infty$, then there exists $z_0\in H$ such that $\psi(z_0)=\inf\limits_{z\in H}\psi(z)$. Moreover, if $\psi$ is also Fr$\acute{e}$chet differentiable on $H$, then $\psi'(z_0)=0$.
\end{theorem}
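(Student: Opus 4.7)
The plan is to follow the classical \emph{direct method of the calculus of variations}. First, I would set $m := \inf_{z \in H} \psi(z)$. Coercivity implies that $\psi$ is bounded below on every bounded set, and in fact globally bounded below: if $m = -\infty$, there would exist a sequence $\{z_n\} \subset H$ with $\psi(z_n) \to -\infty$, but coercivity $\psi(z) \to +\infty$ as $\|z\| \to \infty$ forces $\{z_n\}$ to be bounded, and then $\{\psi(z_n)\}$ is bounded below on that ball by a standard argument combining coercivity with weak lower semi-continuity applied to constant comparison (or, more simply, by noting that coercivity provides an $R > 0$ with $\psi(z) \geq 0$ whenever $\|z\| > R$, so $\inf \psi \geq \min\{\inf_{\|z\| \leq R} \psi, 0\}$, and the infimum over the ball $\|z\| \leq R$ is finite since $H$ is nonempty and we can rule out $-\infty$ via weak sequential compactness on the ball; in any case $m$ is finite).

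Next, I would extract a minimizing sequence $\{z_n\} \subset H$ with $\psi(z_n) \to m$. Coercivity forces $\{z_n\}$ to be bounded in $H$, for otherwise a subsequence would satisfy $\|z_{n_k}\| \to \infty$ and hence $\psi(z_{n_k}) \to +\infty$, contradicting $\psi(z_{n_k}) \to m$. Since $H$ is reflexive, the Eberlein--\v{S}mulian theorem yields a subsequence, still denoted $\{z_{n_k}\}$, and an element $z_0 \in H$ such that $z_{n_k} \rightharpoonup z_0$ weakly in $H$.

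Then I would invoke weak lower semi-continuity of $\psi$ at $z_0$ to obtain
\begin{eqnarray*}
\psi(z_0) \;\leq\; \liminf_{k \to \infty} \psi(z_{n_k}) \;=\; m.
\end{eqnarray*}
Combined with the definition $m = \inf_{z \in H} \psi(z) \leq \psi(z_0)$, this gives $\psi(z_0) = m$, establishing the existence of a minimizer.

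Finally, for the ``moreover'' part, I would apply the Fermat-type principle: since $H$ is a Banach space (hence open in itself), $z_0$ is an interior minimizer of the Fr\'echet differentiable functional $\psi$, so for every $v \in H$ the real function $t \mapsto \psi(z_0 + tv)$ has a local minimum at $t = 0$, which yields $\langle \psi'(z_0), v \rangle = 0$ for every $v \in H$, i.e.\ $\psi'(z_0) = 0$. The only step that is not entirely mechanical is the initial verification that $m > -\infty$ and that the minimizing sequence is bounded, but both follow directly from coercivity; the rest is the standard combination of reflexivity (for weak compactness) and weak lower semi-continuity (to pass to the limit in the infimum).
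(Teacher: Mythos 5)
Your proof is correct. Note, however, that the paper does not prove this theorem at all: it is quoted from Mawhin--Willem \cite{12} and the authors explicitly state that they ``skip the proofs'' of the auxiliary critical point results, so there is no in-paper argument to compare against. Your argument is the standard direct method: a minimizing sequence is bounded by coercivity, reflexivity (via Eberlein--\v{S}mulian) extracts a weakly convergent subsequence, weak sequential lower semi-continuity passes to the limit, and the Fermat principle gives $\psi'(z_0)=0$ at the interior (global) minimizer. One small streamlining: your opening paragraph establishing $m>-\infty$ is unnecessary and is the only slightly muddled part of the write-up; since $\psi$ is real-valued, the chain $m\leq\psi(z_0)\leq\liminf_k\psi(z_{n_k})=m$ obtained at the end already forces $m=\psi(z_0)\in\mathbb{R}$, so finiteness of the infimum comes for free. (Implicitly you are using that weak lower semi-continuity here means \emph{sequential} weak lower semi-continuity, which is the form needed and the one intended in \cite{12}; also the codomain in the statement should be $\mathbb{R}$ rather than $\mathbb{R}^N$, a typo in the paper, since lower semi-continuity and coercivity only make sense for real-valued functionals.)
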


\begin{theorem}(\cite{32}) (Mountain pass theorem)\label{08.1}
Let $H$ be a real Banach space and  $\psi\in C^1(H,\mathbb{R}^N)$ satisfying P.S. condition. Assume that
\begin{itemize}
  \item [$(i)$]
  $\psi(0)=0$,
  \item [$(ii)$]
  there exist $\rho>0$ and $\sigma>0$ such that $\psi(z)\geq\sigma$ for all $z\in H$ with $\|z\|=\rho$,
  \item [$(iii)$]
  there exists $z_1$ in $H$ with $\|z_1\|\geq\rho$ such that $\psi(z_1)<\sigma$.
\end{itemize}
Then $\psi$ possesses a critical value $c\geq\sigma$. Moreover, $c$ can be characterized as
\begin{eqnarray*}
c=\inf_{g\in\overline{\Omega}}\max_{z\in g([0,1])}\psi(z),
\end{eqnarray*}
where $\overline{\Omega}=\{g\in C([0,1],H):g(0)=0,g(1)=z_1\}$.
\end{theorem}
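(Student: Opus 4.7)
The plan is to follow the classical Ambrosetti--Rabinowitz argument: show that the proposed mini-max level $c$ is $\geq \sigma$ via a topological-intersection argument, and then use a deformation/pseudo-gradient construction combined with the P.S.\ condition to show that $c$ must be a critical value.

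First I would verify that $c \geq \sigma$. For any admissible path $g \in \overline{\Omega}$, the function $t \mapsto \|g(t)\|$ is continuous on $[0,1]$ with $\|g(0)\|=0$ and $\|g(1)\|=\|z_1\|\geq \rho$, so by the intermediate value theorem there exists $t^\ast \in [0,1]$ with $\|g(t^\ast)\|=\rho$. By hypothesis (ii), $\psi(g(t^\ast))\geq \sigma$, hence $\max_{z\in g([0,1])}\psi(z)\geq \sigma$. Taking the infimum over $g$ gives $c\geq \sigma>0$. Note also that $\psi(0)=0<\sigma\leq c$ and $\psi(z_1)<\sigma\leq c$, so both endpoints of every admissible path sit strictly below the mini-max level.

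Next I would argue by contradiction that $c$ is a critical value. Suppose the set $K_c=\{z\in H: \psi(z)=c,\ \psi'(z)=0\}$ is empty. The P.S.\ condition then yields a uniform lower bound $\|\psi'(z)\|\geq \delta>0$ on a strip $\{|\psi-c|\leq 2\epsilon\}$ for some small $\epsilon>0$ (otherwise one could extract a P.S.\ sequence whose limit would lie in $K_c$). Choose $\epsilon$ so small that also $\max\{\psi(0),\psi(z_1)\}<c-\epsilon$. Using a locally Lipschitz pseudo-gradient vector field for $\psi$ on the regular set, truncated by a cut-off supported in the strip, one integrates the associated flow for a finite time to produce a deformation $\eta\in C(H,H)$ satisfying (a) $\eta(z)=z$ whenever $|\psi(z)-c|\geq 2\epsilon$, and (b) $\psi(\eta(z))\leq c-\epsilon$ whenever $\psi(z)\leq c+\epsilon$.

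Finally, by definition of $c$ as an infimum, pick $g\in \overline{\Omega}$ with $\max_{z\in g([0,1])}\psi(z)\leq c+\epsilon$ and set $\tilde g = \eta\circ g$. Since $\psi(0),\psi(z_1)<c-\epsilon$, property (a) forces $\eta(0)=0$ and $\eta(z_1)=z_1$, so $\tilde g\in \overline{\Omega}$; property (b) then gives $\max_{z\in \tilde g([0,1])}\psi(z)\leq c-\epsilon$, contradicting the definition of $c$. Hence $K_c\neq \varnothing$ and $c$ is a critical value with the claimed mini-max representation.

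The main obstacle is the deformation step: producing a pseudo-gradient vector field on the (possibly infinite-dimensional) Banach space $H$, checking that the associated flow is well defined for all time in the relevant region, and exploiting the P.S.\ condition precisely to guarantee the uniform bound $\|\psi'\|\geq \delta$ on the strip away from $K_c$. Once the deformation lemma is in hand the rest is a short topological contradiction, but the construction of the cut-off pseudo-gradient flow with all the required properties is the technical heart of the argument; this is exactly why the result is quoted from \cite{32}.
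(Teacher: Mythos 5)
The paper does not prove this theorem at all: it is quoted verbatim from Rabinowitz \cite{32}, and the authors state explicitly that they ``present some necessary definitions and theorems and skip the proofs.'' Your outline is the standard Ambrosetti--Rabinowitz argument (intermediate-value/intersection step giving $c\geq\sigma$, then deformation via a truncated pseudo-gradient flow plus the P.S.\ condition to rule out $K_c=\varnothing$), and it is correct as a sketch, with only a trivial bookkeeping adjustment needed (to invoke property (a) at the endpoints you need $\max\{\psi(0),\psi(z_1)\}\leq c-2\epsilon$, not merely $<c-\epsilon$). You correctly identify the deformation lemma as the technical core that the citation to \cite{32} is meant to cover, so your proposal is consistent with the paper's treatment.
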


First, we use Theorem \ref{04.1} to solve the existence of weak solutions for FBVP $(\ref{4.1})$. Suppose that the assumption $(\mathbf{H_1})$ is satisfied. Recall that, in our setting in (\ref{7.1}), the corresponding functional $\varphi$ on $W^{\alpha,2}_{\Delta,b^-}$ given by
\begin{eqnarray*}
\varphi(u)=\frac{1}{2}\int_a^b\vert{^\mathbb{T}_tD^\alpha_b}u(t)\vert^2\Delta t-\int_a^bF(t,u(t))\Delta t,
\end{eqnarray*}
is continuously differentiable according to Theorem \ref{4.1} and is also weakly lower semi$-$continuous functional on $W^{\alpha,2}_{\Delta,b^-}$ as the sum of a convex continuous function and  a weakly continuous function.

In fact, in view of Proposition \ref{39}, if $u_k\rightharpoonup u$ in $W^{\alpha,2}_{\Delta,b^-}$, then $u_k\rightarrow u$ in $C(J,\mathbb{R}^N)$. As a result, $F(t,u_k(t))\rightarrow F(t,u(t))$ $\Delta-$a.e. $t\in[a,b]_{\mathbb{T}}$. Using the Lebesgue dominated convergence theorem, we obtain $\int_a^bF(t,u_k(t))\Delta t\rightarrow \int_a^bF(t,u(t))\Delta t$, which means that the functional $u\rightarrow\int_a^bF(t,u(t))\Delta t$ is weakly continuous on $W^{\alpha,2}_{\Delta,b^-}$. Furthermore, because fractional derivative operator on $\mathbb{T}$ is linear operator, the functional $u\rightarrow\int_a^b\vert{^\mathbb{T}_aD^\alpha_t}u(t)\vert^2\Delta t$ is convex and continuous on $W^{\alpha,2}_{\Delta,b^-}$.

If $\varphi$ is coercive, by Theorem \ref{04.1}, $\varphi$ has a minimum so that FBVP (\ref{4.1}) is solvable. It remains to find conditions under which $\varphi$ is coercive on $W^{\alpha,2}_{\Delta,b^-}$, i.e., $\lim\limits_{\|z\|\rightarrow\infty}\varphi(z)=+\infty$, for $u\in W^{\alpha,2}_{\Delta,b^-}$. We shall know that it suffices to require that $F(t,x)$ is bounded by a function for $\Delta-$a.e. $t\in J$ and all $x\in\mathbb{R}^N$.

\begin{theorem}\label{080.1}
Let $\frac{1}{2}<\alpha\leq1$, and suppose that $F$ satisfies $(\mathbf{H_1})$. If
\begin{eqnarray}\label{008.1}
\vert F(t,x)\vert\leq\overline{a}\vert x\vert^2+\overline{b}(t)\vert x\vert^{2-\gamma}+\overline{c}(t),\quad \Delta-a.e.\,\, t\in J,\,x\in\mathbb{R}^N,
\end{eqnarray}
where $\overline{a}\in\left[0,\frac{\Gamma^2(\alpha+1)}{2b^{2\alpha}}\right)$, $\gamma\in(0,2)$, $\overline{b}\in L_\Delta^{\frac{2}{\gamma}}([a,b]_{\mathbb{T}},\mathbb{R})$ and $\overline{c}\in L_\Delta^1(J,\mathbb{R})$, then FBVP \eqref{4.1} has at least one weak solution which minimizes $\varphi$ on $W^{\alpha,2}_{\Delta,b^-}$.
\end{theorem}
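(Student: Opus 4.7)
The plan is to apply the direct method of the calculus of variations, namely Theorem \ref{04.1}, to the functional $\varphi$ defined in \eqref{7.1}. The preamble to the statement already records that $\varphi$ is continuously Fr\'echet differentiable and weakly lower semi$-$continuous on $W^{\alpha,2}_{\Delta,b^-}$, and Theorem \ref{31} gives reflexivity. Consequently the only ingredient still to be verified is coercivity: $\varphi(u)\to+\infty$ as $\|u\|_{W^{\alpha,2}_{\Delta,b^-}}\to\infty$. Once this is done, Theorem \ref{04.1} produces a minimizer $u_0\in W^{\alpha,2}_{\Delta,b^-}$ with $\varphi'(u_0)=0$, and Theorem \ref{8.1} then upgrades $u_0$ to a weak solution of FBVP \eqref{4.1}.

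For the coercivity estimate I would work with the equivalent norm $\|u\|=\|{}_t^{\mathbb{T}}D_b^\alpha u\|_{L_\Delta^2}$ coming from Remark after Proposition \ref{034} (allowed since $\alpha>1/2$). Integrating the growth condition \eqref{008.1} and applying H\"older's inequality (Proposition \ref{17}) with conjugate exponents $2/\gamma$ and $2/(2-\gamma)$ yields
\begin{align*}
\int_a^b F(t,u(t))\,\Delta t
&\leq \overline{a}\,\|u\|_{L_\Delta^2}^{2}
 + \|\overline{b}\|_{L_\Delta^{2/\gamma}}\,\|u\|_{L_\Delta^2}^{2-\gamma}
 + \|\overline{c}\|_{L_\Delta^1}.
\end{align*}
Next I invoke Proposition \ref{034} (with $p=2$, $\alpha>1/2$) to replace $\|u\|_{L_\Delta^2}$ by $(b^\alpha/\Gamma(\alpha+1))\|u\|$, obtaining
\begin{align*}
\varphi(u)
&\geq \left(\tfrac{1}{2}-\tfrac{\overline{a}\,b^{2\alpha}}{\Gamma^2(\alpha+1)}\right)\|u\|^{2}
 - C_1\|u\|^{2-\gamma} - C_2,
\end{align*}
for constants $C_1,C_2\geq0$ independent of $u$. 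The assumption $\overline{a}<\Gamma^2(\alpha+1)/(2b^{2\alpha})$ makes the quadratic coefficient strictly positive; combined with $2-\gamma<2$ this gives $\varphi(u)\to+\infty$ as $\|u\|\to\infty$, i.e.\ coercivity.

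The main (mildly) delicate point is keeping the two norms on $W^{\alpha,2}_{\Delta,b^-}$ straight: weak lower semi$-$continuity and Theorem \ref{04.1} are stated for the Hilbert norm, whereas the Sobolev$-$type embedding in Proposition \ref{034} is most cleanly applied in the $\|{}_t^{\mathbb{T}}D_b^\alpha\cdot\|_{L_\Delta^2}$ form. Theorem \ref{thm35}, together with $\alpha>1/2$, ensures these norms are equivalent, so coercivity in one yields coercivity in the other and the conclusions of Theorems \ref{04.1} and \ref{8.1} transfer without change. Everything else is routine H\"older and triangle$-$inequality bookkeeping.
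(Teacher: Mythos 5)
Your proposal is correct and follows essentially the same route as the paper: both reduce the problem to verifying coercivity of $\varphi$, and both establish it by integrating the growth condition \eqref{008.1}, applying H\"older's inequality with exponents $2/\gamma$ and $2/(2-\gamma)$, and invoking the embedding inequality \eqref{33} of Proposition \ref{034} to obtain $\varphi(u)\geq\left(\tfrac{1}{2}-\tfrac{\overline{a}\,b^{2\alpha}}{\Gamma^2(\alpha+1)}\right)\|u\|^{2}-C_1\|u\|^{2-\gamma}-C_2$. Your explicit remark about reconciling the Hilbert norm with the equivalent norm $\|{}_t^{\mathbb{T}}D_b^\alpha\cdot\|_{L_\Delta^2}$ is a point the paper passes over silently, but it does not change the argument.
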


\begin{proof}
Taking account of the arguments above, our task reduces to testify that $\varphi$ is coercive on $W^{\alpha,2}_{\Delta,b^-}$. For $u\in W^{\alpha,2}_{\Delta,b^-}$, it follows from (\ref{008.1}), (\ref{33}) and the H$\ddot{o}$lder inequality on time scales that
{\setlength\arraycolsep{2pt}
\begin{eqnarray*}
&&\varphi(u)\nonumber\\
&=&\frac{1}{2}\int_a^b|^\mathbb{T}_tD^\alpha_bu(t)|^2\Delta t-\int_a^bF(t,u(t))\Delta t\\
&\geq&\frac{1}{2}\int_a^b|^\mathbb{T}_tD^\alpha_bu(t)|^2\Delta t-\overline{a}\int_a^b|u(t)|^2\Delta t-\int_a^b\overline{b}(t)|u(t)|^{2-\gamma}\Delta t-\int_a^b\overline{c}(t)\Delta t\\
&\geq&\frac{1}{2}\|u\|^2-\overline{a}\|u\|_{L_\Delta^2}^2-\left(\int_a^b|\overline{b}(t)|^{\frac{2}{\gamma}}\Delta t\right)^{\frac{\gamma}{2}}\left(\int_a^b|u(t)|^2\Delta t \right)^{1-\frac{\gamma}{2}}\Delta t-\int_a^b\overline{c}(t)\Delta t\\
&=&\frac{1}{2}\|u\|^2-\overline{a}\|u\|_{L_\Delta^2}^2-\left(\int_a^b|\overline{b}(t)|^{\frac{2}{\gamma}}\Delta t\right)^{\frac{\gamma}{2}}\|u\|_{L_\Delta^2}^{2-\gamma}-\int_a^b\overline{c}(t)\Delta t\\
&\geq&\frac{1}{2}\|u\|^2-\frac{\overline{a}b^{2\alpha}}{\Gamma^2(\alpha+1)}\|u\|^2-\left(\int_a^b|\overline{b}(t)|^{\frac{2}{\gamma}}\Delta t\right)^{\frac{\gamma}{2}}\left(\frac{b^\alpha}{\Gamma(\alpha+1)}\right)^{2-\gamma}\|u\|^{2-\gamma}-\int_a^b\overline{c}(t)\Delta t\\
&=&\left(\frac{1}{2}-\frac{\overline{a}b^{2\alpha}}{\Gamma^2(\alpha+1)}\right)\|u\|^2-\left(\int_a^b|\overline{b}(t)|^{\frac{2}{\gamma}}\Delta t\right)^{\frac{\gamma}{2}}\left(\frac{b^\alpha}{\Gamma(\alpha+1)}\right)^{2-\gamma}\|u\|^{2-\gamma}-\int_a^b\overline{c}(t)\Delta t.
\end{eqnarray*}}
Noting that $\overline{a}\in\left[0,\frac{\Gamma^2(\alpha+1)}{2b^{2\alpha}}\right)$ and $\gamma\in(0,2)$, we obtain $\varphi(u)=+\infty$ as $\|u\|\rightarrow\infty$, and so $\varphi$ is coercive, which completes the proof.
\end{proof}

Let $a_0=\min\limits_{\lambda\in[\frac{1}{2},1]}\left\{\frac{\Gamma^2(\lambda+1)}{2b^{2\lambda}}\right\}$. The following result follows immediately from Theorem \ref{08.1}.

\begin{corollary}For
$\alpha\in\left(\frac{1}{2},1\right]$, if $F$ satisfies the condition $(\mathbf{H_1})$ and \eqref{008.1} with $\overline{a}\in[0,a_0)$, then FBVP \eqref{4.1} has at least one weak solution which minimuzes $\varphi$ on $W^{\alpha,2}_{\Delta,b^-}$.
\end{corollary}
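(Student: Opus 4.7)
The plan is to derive this corollary as an immediate specialization of Theorem \ref{080.1}. The only substantive point to notice is that the constant $a_0$ was defined as the minimum of $\frac{\Gamma^2(\lambda+1)}{2b^{2\lambda}}$ over $\lambda\in[\tfrac{1}{2},1]$, so by construction $a_0\leq\frac{\Gamma^2(\alpha+1)}{2b^{2\alpha}}$ for every fixed $\alpha\in(\tfrac{1}{2},1]$. Hence the interval $[0,a_0)$ is contained in $\bigl[0,\frac{\Gamma^2(\alpha+1)}{2b^{2\alpha}}\bigr)$ for each admissible $\alpha$, which is precisely the range required by the hypothesis on $\overline{a}$ in Theorem \ref{080.1}.

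Given this inclusion, I would simply observe: fix $\alpha\in(\tfrac{1}{2},1]$ and assume $F$ meets $(\mathbf{H_1})$ together with \eqref{008.1} with $\overline{a}\in[0,a_0)$. Then $\overline{a}<\frac{\Gamma^2(\alpha+1)}{2b^{2\alpha}}$, so all the hypotheses of Theorem \ref{080.1} are satisfied for this $\alpha$. Applying that theorem produces a weak solution of FBVP \eqref{4.1} that minimizes $\varphi$ on $W^{\alpha,2}_{\Delta,b^-}$, which is exactly the conclusion of the corollary.

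There is essentially no obstacle here beyond unwinding the definition of $a_0$; no additional estimate, compactness argument, or functional-analytic input is needed beyond what Theorem \ref{080.1} already supplies. The corollary is a uniform-in-$\alpha$ reformulation, useful because it replaces the $\alpha$-dependent bound by a single constant valid throughout the interval $(\tfrac{1}{2},1]$. I would therefore write the proof as one short paragraph invoking Theorem \ref{080.1} directly.
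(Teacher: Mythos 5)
Your proposal is correct and matches the paper's intent exactly: the paper states the corollary "follows immediately" from the coercivity theorem (Theorem \ref{080.1}), and the only content is precisely the observation you make, namely that $a_0=\min_{\lambda\in[\frac{1}{2},1]}\frac{\Gamma^2(\lambda+1)}{2b^{2\lambda}}\leq\frac{\Gamma^2(\alpha+1)}{2b^{2\alpha}}$ for each fixed $\alpha\in\left(\frac{1}{2},1\right]$, so $\overline{a}\in[0,a_0)$ implies $\overline{a}\in\left[0,\frac{\Gamma^2(\alpha+1)}{2b^{2\alpha}}\right)$ and Theorem \ref{080.1} applies. No gap; your one-paragraph write-up is an appropriate proof.
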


Our task is now to use Theorem \ref{08.1} (Mountain pass theorem) to find a nonzero critical point of functional $\varphi$ on $W^{\alpha,2}_{\Delta,b^-}$.

\begin{theorem}\label{09.1}
Let $\frac{1}{2}<\alpha\leq1$, and suppose that $F$ satisfies $(\mathbf{H_1})$. If
\begin{itemize}
  \item [$(H_2)$]
  $F\in C(J\times\mathbb{R}^N,\mathbb{R})$, and there are $\mu\in\left[0,\frac{1}{2}\right)$ and $M>0$ such that $0<F(t,x)\leq \mu(\nabla F(t,x),x)$ for all $x\in\mathbb{R}^N$ with $\vert x\vert\geq M$ and $t\in J$,
  \item [$(H_3)$]
  $\limsup\limits_{|x|\rightarrow0}\frac{F(t,x)}{|x|^2}<\frac{\Gamma^2(\alpha+1)}{2b^{2\alpha}}$ uniformly for $t\in J$ and $x\in\mathbb{R}^N$
\end{itemize}
are satisfied, then FBVP \eqref{4.1} has at least one nonzero weak solution on  $W^{\alpha,2}_{\Delta,b^-}$.
\end{theorem}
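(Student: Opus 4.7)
The plan is to apply the Mountain Pass Theorem (Theorem 4.4) to the functional $\varphi$ defined in (4.2), which by Theorem 4.1 is of class $C^1$ on $W^{\alpha,2}_{\Delta,b^-}$. Clearly $\varphi(0)=0$, so it remains to check the two geometric hypotheses and the Palais--Smale condition. Throughout I will freely use the continuous embedding $\|u\|_{L_\Delta^2}\leq \frac{b^\alpha}{\Gamma(\alpha+1)}\|{_t^\mathbb{T}}D_b^\alpha u\|_{L_\Delta^2}$ from Proposition 3.7, the continuous embedding $\|u\|_\infty \leq C_\alpha \|u\|$ from Proposition 3.8 (valid since $\alpha>\tfrac12$), and the compact embedding $W^{\alpha,2}_{\Delta,b^-}\hookrightarrow\hookrightarrow C(J,\mathbb{R}^N)$ from Proposition 3.9.

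For the mountain-pass geometry near zero I will use $(H_3)$: pick $\varepsilon>0$ with $\limsup_{|x|\to 0}F(t,x)/|x|^2<\frac{\Gamma^2(\alpha+1)}{2b^{2\alpha}}-\varepsilon$, so there exists $\delta>0$ with $F(t,x)\leq\bigl(\frac{\Gamma^2(\alpha+1)}{2b^{2\alpha}}-\varepsilon\bigr)|x|^2$ for $|x|\leq\delta$. For $\|u\|$ small enough the embedding into $C(J,\mathbb{R}^N)$ forces $|u(t)|\leq\delta$ on $J$, and then
\begin{equation*}
\varphi(u)\geq \tfrac12\|{_t^\mathbb{T}}D_b^\alpha u\|_{L_\Delta^2}^2-\Bigl(\tfrac{\Gamma^2(\alpha+1)}{2b^{2\alpha}}-\varepsilon\Bigr)\|u\|_{L_\Delta^2}^2\geq \varepsilon\,\tfrac{b^{2\alpha}}{\Gamma^2(\alpha+1)}\|{_t^\mathbb{T}}D_b^\alpha u\|_{L_\Delta^2}^2,
\end{equation*}
which combined with the equivalence of $\|\cdot\|$ and $\|{_t^\mathbb{T}}D_b^\alpha\cdot\|_{L_\Delta^2}$ (Remark after Proposition 3.7) yields $\varphi(u)\geq\sigma>0$ on some sphere $\|u\|=\rho$. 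For the ``descent'' part I will apply the standard Ambrosetti--Rabinowitz integration trick to $(H_2)$: since $\mu\in(0,\tfrac12)$, fixing $|x_0|=M$ and integrating $sf'(s)\geq \mu^{-1}f(s)$ for $f(s)=F(t,sx_0/|x_0|)$ gives $F(t,x)\geq c_1|x|^{1/\mu}-c_2$ with $1/\mu>2$. Choosing any $u_0\in W^{\alpha,2}_{\Delta,b^-}\setminus\{0\}$, the estimate $\varphi(s u_0)\leq \tfrac{s^2}{2}\|{_t^\mathbb{T}}D_b^\alpha u_0\|_{L_\Delta^2}^2-c_1 s^{1/\mu}\|u_0\|_{L_\Delta^{1/\mu}}^{1/\mu}+O(1)$ sends $\varphi(s u_0)\to-\infty$ as $s\to\infty$, giving the required $z_1$.

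The main obstacle is the Palais--Smale condition. Given $\{u_k\}$ with $\varphi(u_k)$ bounded and $\varphi'(u_k)\to 0$ in $(W^{\alpha,2}_{\Delta,b^-})^*$, I will first establish boundedness by forming
\begin{equation*}
\varphi(u_k)-\mu\langle\varphi'(u_k),u_k\rangle=\bigl(\tfrac12-\mu\bigr)\|{_t^\mathbb{T}}D_b^\alpha u_k\|_{L_\Delta^2}^2+\int_a^b\bigl[\mu(\nabla F(t,u_k),u_k)-F(t,u_k)\bigr]\Delta t.
\end{equation*}
The integrand is nonnegative where $|u_k(t)|\geq M$ by $(H_2)$ and uniformly bounded where $|u_k(t)|<M$ by the continuity of $F$ and $\nabla F$. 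Because $\mu<\tfrac12$, this yields $\|u_k\|^2\leq C(1+\|u_k\|)$, so $\{u_k\}$ is bounded. By reflexivity (Theorem 3.10) a subsequence satisfies $u_k\rightharpoonup u$, and Proposition 3.9 gives $u_k\to u$ in $C(J,\mathbb{R}^N)$. Using $(H_1)$ together with uniform convergence, $\int_a^b(\nabla F(t,u_k)-\nabla F(t,u),u_k-u)\Delta t\to 0$ by dominated convergence. Writing
\begin{equation*}
\|{_t^\mathbb{T}}D_b^\alpha(u_k-u)\|_{L_\Delta^2}^2=\langle\varphi'(u_k)-\varphi'(u),u_k-u\rangle+\int_a^b(\nabla F(t,u_k)-\nabla F(t,u),u_k-u)\Delta t,
\end{equation*}
both terms on the right go to zero (the first because $\varphi'(u_k)\to 0$, $\varphi'(u_k)$ is bounded, and $u_k\rightharpoonup u$), so $\|{_t^\mathbb{T}}D_b^\alpha(u_k-u)\|_{L_\Delta^2}\to 0$ and then $u_k\to u$ in $W^{\alpha,2}_{\Delta,b^-}$ by norm equivalence. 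The Mountain Pass Theorem now produces a critical point $u^*$ with $\varphi(u^*)\geq\sigma>0=\varphi(0)$, so $u^*\neq 0$, and Theorem 4.3 identifies $u^*$ as a weak solution of (4.1).
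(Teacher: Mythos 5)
Your proposal is correct and follows essentially the same route as the paper's own proof: the Mountain Pass Theorem, with $(H_3)$ giving the estimate $\varphi\geq\sigma>0$ on a small sphere, the Ambrosetti--Rabinowitz growth $F(t,x)\geq c_1|x|^{1/\mu}-c_2$ from $(H_2)$ giving the descent direction, and $(H_2)$ again plus the compact embedding into $C(J,\mathbb{R}^N)$ giving the Palais--Smale condition. Your identity $\|{_t^\mathbb{T}}D_b^\alpha(u_k-u)\|_{L_\Delta^2}^2=\langle\varphi'(u_k)-\varphi'(u),u_k-u\rangle+\int_a^b(\nabla F(t,u_k)-\nabla F(t,u),u_k-u)\Delta t$ is a slightly cleaner rearrangement of the inequality the paper uses at the same point, but the argument is the same.
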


\begin{proof}
We will prove that $\varphi$ satisfies all the conditions of Theorem \ref{08.1}.

First, we will verify that $\varphi$ satisfies P.S. condition. Since $F(t,x)- \mu(\nabla F(t,x),x)$ is continuous for $t\in J$ and $\vert x\vert\leq M$, there is $c\in \mathbb{R}^+$ such that
\begin{eqnarray*}
F(t,x)\leq \mu(\nabla F(t,x),x)+c,\quad t\in J,\,\,\vert x\vert\leq M.
\end{eqnarray*}
In view of condition $(\mathbf{H_2})$, one has
\begin{eqnarray}\label{090.1}
F(t,x)\leq \mu(\nabla F(t,x),x)+c,\quad t\in J,\,\,x\in\mathbb{R}^N.
\end{eqnarray}
Let $\{u_k\}\subset W^{\alpha,2}_{\Delta,b^-}$, $\vert\varphi(u_k)\vert\leq K$, $k=1,2,\cdots$, $\varphi'(u_k)\rightarrow0$. Notice that
\begin{align}\label{5.1}
\langle\varphi'(u_k),u_k\rangle
=&\int_a^b\left[(^\mathbb{T}_tD^\alpha_bu_k(t),\,^\mathbb{T}_tD^\alpha_bu_k(t))-(\nabla F(t,u_k(t)),u_k(t))\right]\Delta t\nonumber\\
=&\|u_k\|^2-\int_a^b\nabla F(t,u_k(t)),u_k(t))\Delta t.
\end{align}
It follows from $(\ref{090.1})$ and $(\ref{5.1})$ that
\begin{align*}
K\geq&\varphi(u_k)\\
=&\frac{1}{2}\int_a^b\vert{^\mathbb{T}_tD^\alpha_b}u_k(t)\vert^2\Delta t-\int_a^bF(t,u_k(t))\Delta t\\
\geq&\frac{1}{2}\|u_k\|^2-\mu\int_a^b(\nabla F(t,u_k(t)),u_k(t))\Delta t-cb\\
=&\left(\frac{1}{2}-\mu\right)\|u_k\|^2+\mu\langle\varphi'(u_k),u_k\rangle-cb\\
\geq&\left(\frac{1}{2}-\mu\right)\|u_k\|^2-\mu\|\varphi'(u_k)\|\|u_k\|-cb.
\end{align*}
Since $\varphi'(u_k)\rightarrow0$, there is $N_0\in \mathbb{N}$ such that
\begin{eqnarray*}
K\geq\left(\frac{1}{2}-\mu\right)\|u_k\|^2-\|u_k\|-cb,\quad k>N_0,
\end{eqnarray*}
which implies that $\{u_k\}\subset W^{\alpha,2}_{\Delta,b^-}$ is bounded. Since $W^{\alpha,2}_{\Delta,b^-}$ is a reflexive space, going to a subsequence if necessary, we may suppose that $u_k\rightharpoonup u$ weakly in $W^{\alpha,2}_{\Delta,b^-}$, therefore, one obtains
{\setlength\arraycolsep{2pt}
\begin{eqnarray}\label{50.1}
&&\langle\varphi'(u_k)-\varphi'(u),u_k-u\rangle\nonumber\\
&=&\langle\varphi'(u_k),u_k-u\rangle-\langle\varphi'(u),u_k-u\rangle\nonumber\\
&\leq&\|\varphi'(u_k)\|\|u_k-u\|-\langle\varphi'(u),u_k-u\rangle\rightarrow0,
\end{eqnarray}}
as $k\rightarrow\infty$. Furthermore, in view of $(\ref{34})$ and Proposition $\ref{39}$, one can get   that $u_k$ is bounded in $C(J,\mathbb{R}^N)$ and $\|u_k-u\|_\infty=0$ as $k\rightarrow\infty$. As a result, one has
\begin{eqnarray}\label{050.1}
\int_a^b\nabla F(t,u_k(t))\Delta t\rightarrow \int_a^b\nabla F(t,u(t))\Delta t,\quad k\rightarrow\infty.
\end{eqnarray}
Noting that
{\setlength\arraycolsep{2pt}
\begin{eqnarray*}
&&\langle\varphi'(u_k)-\varphi'(u),u_k-u\rangle\nonumber\\
&=&\int_a^b(^\mathbb{T}_tD^\alpha_bu_k(t)-^\mathbb{T}_tD^\alpha_bu(t))^2\Delta t-\int_a^b(\nabla F(t,u_k(t))-\nabla F(t,u(t)))\\
&&\times(u_k(t)-u(t))\Delta t\\
&\geq&\|u_k-u\|^2-\left\vert\int_a^b(\nabla F(t,u_k(t))-\nabla F(t,u(t)))\Delta t\right\vert\|u_k-u\|_\infty.
\end{eqnarray*}}
Combining with ($\ref{50.1}$) and $(\ref{050.1})$, it is easy to prove that $\|u_k-u\|^2\rightarrow0$ as $k\rightarrow\infty$, and so that $u_k\rightarrow u$ in $W^{\alpha,2}_{\Delta,b^-}$. Hence, we get the desired convergence property.

By condition $(\mathbf{H_3})$, there are $\epsilon\in(0,1)$ and $\delta>0$ such that $F(t,x)\leq(1-\epsilon)\left(\frac{\Gamma^2(\alpha+1)}{2b^{2\alpha}}\right)\vert x\vert^2$ for all $t\in J$ and $x\in\mathbb{R}^N$ with $\vert x\vert\leq\delta$.

Let $\rho=\frac{\Gamma(\alpha)(2(\alpha-1)+1)^{\frac{1}{2}}}{b^{\alpha-\frac{1}{2}}}\delta$ and $\sigma=\frac{\epsilon\rho^2}{2}>0$. Then it follows from ($\ref{34}$) that
\begin{eqnarray*}
\|u\|_\infty\leq\frac{b^{\alpha-\frac{1}{2}}}{\Gamma(\alpha)(2(\alpha-1)+1)^{\frac{1}{2}}}\|u\|=\delta
\end{eqnarray*}
for all $u\in W^{\alpha,2}_{\Delta,b^-}$ with $\|u\|=\rho$. Hence, combining with ($\ref{33}$), one gets
\begin{align*}
\varphi(u)=&\frac{1}{2}\int_a^b\vert{^\mathbb{T}_tD^\alpha_b}u(t)\vert^2\Delta t-\int_a^bF(t,u(t))\Delta t\\
=&\frac{1}{2}\|u\|^2-\int_a^bF(t,u(t))\Delta t\\
\geq&\frac{1}{2}\|u\|^2-(1-\epsilon)\frac{\Gamma^2(\alpha+1)}{2b^{2\alpha}}\int_a^b\vert u(t)\vert^2\Delta t\\
\geq&\frac{1}{2}\|u\|^2-\frac{1}{2}(1-\epsilon)\|u\|^2\\
=&\frac{1}{2}\epsilon\|u\|^2\\
=&\sigma
\end{align*}
for all $u\in W^{\alpha,2}_{\Delta,b^-}$ with $\|u\|=\rho$. This implies $(\mathbf{ii})$ in Theorem \ref{08.1} is satisfied.

It is obvious from the definition of $\varphi$ and condition $(\mathbf{H_3})$ that $\varphi(0)=0$, and so, it suffices to prove that $\varphi$ satisfies $(\mathbf{iii})$ in Theorem \ref{08.1}.

Since condition $(\mathbf{H_2})$, a simple regularity argument then proves that there is $r_1,r_2>0$ such that
\begin{eqnarray*}
F(t,x)\geq r_1\vert x\vert^{\frac{1}{\mu}}-r_2,\quad x\in\mathbb{R}^N,\,\,t\in J.
\end{eqnarray*}
For any $u\in W^{\alpha,2}_{\Delta,b^-}$ with $u\neq0$, $\kappa>0$ and noting that $\mu\in\left[0,\frac{1}{2}\right)$, one has
\begin{align*}
\varphi(\kappa u)=&\frac{1}{2}\int_a^b\vert{^\mathbb{T}_tD^\alpha_b}\kappa u(t)\vert^2\Delta t-\int_a^bF(t,\kappa u(t))\Delta t\\
\leq&\frac{1}{2}\|\kappa u\|^2-r_1\int_a^b\vert\kappa u(t)\vert^{\frac{1}{\mu}}\Delta t+r_2b\\
=&\frac{1}{2}\kappa^2\|u\|^2-r_1\kappa^{\frac{1}{\mu}}\|u\|_{K_\Delta^{\frac{1}{\mu}}}^{\frac{1}{\mu}}+r_2b\\
\rightarrow&-\infty
\end{align*}
as $k\rightarrow\infty$. Then there is a sufficiently large $\kappa_0$ such that $\varphi(\kappa_0u)\leq0$. As a result, $(\mathbf{iii})$ of Theorem \ref{08.1} holds.

Lastly, noting that $\varphi(0)=0$ while for our critical point $u$, $\varphi(u)\geq\sigma>0$. Therefore, $u$ is a nontrivial weak solution of the FBVP \eqref{4.1}, and this completes the proof.
\end{proof}

\begin{corollary}
For all $\alpha\in\left(\frac{1}{2},1\right]$, assume that $F$ satisfies conditions $(\mathbf{H_1})$ and $(\mathbf{H_2})$. If
\begin{itemize}
  \item [$(H_3)'$]
  $F(t,x)=o(\vert x\vert^2)$, as $\vert x\vert\rightarrow0$ uniformly for $t\in J$ and $x\in\mathbb{R}^N$
\end{itemize}
is satisfied, then the FBVP \eqref{4.1} has at least one nonzero weak solution on $W^{\alpha,2}_{\Delta,b^-}$.
\end{corollary}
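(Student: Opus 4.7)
The plan is to deduce the corollary directly from Theorem \ref{09.1} by showing that hypothesis $(\mathbf{H_3})'$ is a strictly stronger form of $(\mathbf{H_3})$, so that all the hypotheses of the mountain pass result are in place once $(\mathbf{H_1})$, $(\mathbf{H_2})$ and $(\mathbf{H_3})'$ hold.

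More precisely, I would unpack $(\mathbf{H_3})'$ using the $\varepsilon$-definition of the little-oh symbol uniform in $t$: given any $\varepsilon>0$, there exists $\delta=\delta(\varepsilon)>0$ such that
\begin{equation*}
|F(t,x)|\leq \varepsilon\,|x|^2\qquad\text{for all }t\in J\text{ and all }x\in\mathbb{R}^N\text{ with }0<|x|\leq\delta.
\end{equation*}
Dividing by $|x|^2$ and taking the $\limsup$ as $|x|\to 0$ yields $\limsup_{|x|\to 0}\frac{F(t,x)}{|x|^2}\leq\varepsilon$ uniformly in $t\in J$. Since $\varepsilon>0$ is arbitrary, one obtains
\begin{equation*}
\limsup_{|x|\to 0}\frac{F(t,x)}{|x|^2}\leq 0<\frac{\Gamma^2(\alpha+1)}{2b^{2\alpha}},
\end{equation*}
which is exactly condition $(\mathbf{H_3})$.

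With $(\mathbf{H_1})$, $(\mathbf{H_2})$ and $(\mathbf{H_3})$ verified and $\alpha\in(\tfrac{1}{2},1]$, Theorem \ref{09.1} applies verbatim and produces a nonzero critical point $u$ of the functional $\varphi$ on $W^{\alpha,2}_{\Delta,b^-}$; by Theorem \ref{8.1}, such a $u$ is a nonzero weak solution of FBVP \eqref{4.1}. I do not anticipate any real obstacle here, since the content is purely the implication $(\mathbf{H_3})'\Rightarrow(\mathbf{H_3})$ together with an invocation of the previous theorem; the only point that deserves a sentence of care is the uniformity in $t\in J$, which is built into the statement of $(\mathbf{H_3})'$ and is exactly what is needed to make the $\limsup$ estimate hold uniformly.
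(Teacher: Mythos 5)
Your proposal is correct and matches the paper's (implicit) argument: the paper states this corollary without proof as an immediate consequence of Theorem \ref{09.1}, precisely because $(\mathbf{H_3})'$ gives $\limsup_{|x|\to 0}F(t,x)/|x|^2=0<\frac{\Gamma^2(\alpha+1)}{2b^{2\alpha}}$, i.e.\ $(\mathbf{H_3})$ holds. Your write-up simply makes that one-line implication explicit, including the needed uniformity in $t$.
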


{}


\begin{thebibliography}{}

\bibitem{13} Q. Du,  {\em Nonlocal modeling, analysis, and computation},
      Philadelphia: SIAM, 2019.

\bibitem{14} B. Guo, X. Pu, F. Huang,  {\em Fractional partial differential equations and their numerical solutions},
      London: World Scientific Publishing Co., 2015.

\bibitem{15} R. Hilfer,  {\em Applications of fractional calculus in physics},
      London: World Scientific Publishing Co., 2000.

\bibitem{16} A. A. Kilbas, H. M. Srivastava, J. J. Trujillo,  {\em Theory and applications of fractional differential equations},
      North--Holland: Elsevier B.V., 2006.

\bibitem{17} M. Meerschaert, A. Sikorskii,  {\em Stochastic models for fractional calculus}, Boston: De Gruyter, 2012.

\bibitem{t1} S. Hilger,  {Analysis on measure chains--a unified approach to continuous and discrete calculus}, {\it Results in Mathematics,} \textbf{18} (1990), 18--56. doi: 10.1007/BF03323153.

\bibitem{t4} D. R. Anderson, S. G. Georgiev,  {\em Conformable dynamic equations on time scales}, Boca Raton: Chapman and Hall/CRC, 2020.

\bibitem{t5} S. G. Georgiev, $\dot{I}$. M. Erhan,  {The Taylor series method and trapezoidal rule on time scales}, {\it Appl. Math. Comput.,} \textbf{378} (2020),  125200. doi: 10.1016/j.amc.2020.125200.

\bibitem{t7} B. J. Jackson, J. M. Davis,  {An ergodic approach to Laplace transforms on time scales}, {\it J. Math. Anal. Appl.,} \textbf{502} (2021),  125231. doi: 10.1016/j.jmaa.2021.125231.

\bibitem{t8} B. J. Jackson, J. M. Davis,  {Measuring information flow among international stock markets: An approach of entropy--based networks on multi time--scales}, {\it Physica A.,} \textbf{577} (2021),  125231. doi: 10.1016/j.physa.2021.126068.

\bibitem{a1} V. Kumar, M. Malik,  {Existence and stability of fractional integro differential equation with non-instantaneous integrable impulses and periodic boundary condition on time scales}, {\it J. King Saud Univ. Sci,} \textbf{31} (2019),  1311--1317. doi: 10.1016/j.jksus.2018.10.011.

\bibitem{a2} V. Kumar, M. Malik,  {Controllability results of fractional integro-differential equation with non-instantaneous impulses on time scales},
    {\it IMA J. Math. Control Inform.,} \textbf{38} (2021),  211--231. doi: 10.1093/imamci/dnaa008.

\bibitem{a3} X. Tian, Y. Zhang,  {Fractional time-scales noether theorem with caputo $\delta$ derivatives for Hamiltonian systems}, {\it Appl. Math. Comput.,}
   \textbf{393} (2021),  125753. doi: 10.1016/j.amc.2020.125753.

\bibitem{a4} D. F. M. Torres,  {Cauchy's formula on nonempty closed sets and a new notion of Riemann$-$Liouville fractional integral on time scales},
      {\it Appl. Math. Lett.,} \textbf{121} (2021), 107407. doi: 10.1016/j.aml.2021.107407.

\bibitem{7} R. P. Agarwal, V. Otero-Espinar, K. Perera, D.R. Vivero,  {Basic properties of Sobolev's spaces on time scales}, {\it Adv. Diff. Equ.,}
   \textbf{2006} (2006), 38121. doi: 10.1155/ADE/2006/38121.

\bibitem{2} J. Zhou, Y. Li,  {Sobolev's spaces on time scales and its application to a class of second order Hamiltonian systems on time scales}, {\it Nonlinear Anal.,} \textbf{73} (2010), 1375--1380. doi: 0.1016/j.na.2010.04.070.

\bibitem{2a} Y. Wang,  J. Zhou, Y. Li,  {Fractional Sobolev's spaces on time scales via conformable fractional calculus and their application to a fractional differential equation on time scales}, {\it Adv. Math. Phys.,} \textbf{2016} (2016), 9636491. doi: 10.1155/2016/9636491.

\bibitem{t2} M. Bohner, A. Peterson,  {\em Dynamic equations on time scales: An introduction with applications}, Boston: Birkh$\ddot{a}$user, 2001.

\bibitem{t3} M. Bohner, A. Peterson,  {\em Advances in dynamic equations on time scales}, Boston: Birkh$\ddot{a}$user, 2003.

\bibitem{3} N. Benkhettou, A. Hammoudi, D. F. M. Torres,  {Existence and uniqueness of solution for a fractional Riemann$-$Liouville initial value problem on time scales}, {\it J. King Saud Univ. Sci.,} \textbf{28} (2016), 87--92. doi: 10.1016/j.jksus.2015.08.001.

\bibitem{10} A. Ahmadkhanlu,  M. Jahanshahi,  {On the existence and uniqueness of solution of initial value problem for fractional order differential equations on time scales}, {\it Bull. Iranian Math. Soc.,} \textbf{38} (2012), 241--252. doi: 10.1186/2251-7456-7-17.

\bibitem{4} S. G. Samko, A. A. Kilbas, O. I. Marichev,  {\em Fractional integrals and derivatives$-$theory and applications}, Amsterdam: Gordonand Breach Science Publishers, 1993.

\bibitem{3'} J. M. Davis, I. A. Gravagne, B. J. Jackson, R. J. Marks, A. A. Ramos,  {The Laplace transform on time scales revised}, {\it J. Math. Anal. Appl.,} \textbf{33} (2007), 1291--1307. doi: 10.1016/j.jmaa.2006.10.089.

\bibitem{6'} N. R. O. Bastos, D. Mozyrska, D. F. M. Torres,  {Fractional derivatives and integrals on time scales via the inverse generalized Laplace transform}, {\it J. Math. Comput.,} \textbf{J11} (2011), 1--9.

\bibitem{1} A. Cabada, D. R. Vivero,  {Criterions for absolute continuity on time scales}, {\it J. Diff. Equ. Appl.,} \textbf{11} (2005), 1013--1028. doi: 10.1080/10236190500272830.

\bibitem{5} L. Bourdin, D. Idczak,  {Fractional fundamental lemma and fractional integration by parts formula$-$applications to critical points of Bolza functionals and to linear boundary value problems}, {\it Adv. Differential Equations,} \textbf{20} (2014), 213--232.

\bibitem{8} M. Bohner, A. Peterson,  {\em Advances in dynamic equations on time scales}, Boston: Birkh\"{a}user, 2003.

\bibitem{11} R.P. Agarwal, M. Bohner, P. $\breve{R}$eh$\acute{a}$k,  {\em Half$-$Linear dynamic equations}, in {\em Nonlinear analysis and applications to V. Lakshmikantham on his 80th Birthday} (eds. R. P. Agarwal, D. $O'$Regan), Dordrecht: Kluwer Academic Publishers, 2003, 1--57.

\bibitem{4'} M. Bohner, A. Peterson,  {Laplace transform and Z$-$transform: unification and extension}, {\it Methods Appl. Anal.,} \textbf{9} (2002), 155--162. doi: 10.4310/MAA.2002.v9.n1.a6.

\bibitem{1'} A. Carpinteri, F. Mainardi,  {\em Fractals and fractional calculus in continuum mechanics}, Vienna: Springer, 1997.

\bibitem{33} S. Jahanshahi, E. Babolian, D. F. M. Torres, A. Vahidi,  {Solving Abel integral equations of first kind via fractional calculus}, {\it J. King Saud Univ. Sci,} \textbf{27} (2015), 161--167. doi: 10.1016/j.jksus.2014.09.004.

\bibitem{9} H. Brezis,  {\em Analyse fonctionnelle, theorie et applications},
      Paris:  Masson, 1983.

\bibitem{12} J. Mawhin, M. Willem,  {\em Critical point theory and hamiltonian systems}, Berlin: Springer, 1989.

\bibitem{33} S. Jahanshahi, E. Babolian, D. F. M. Torres, A. Vahidi,  {Solving Abel integral equations of first kind via fractional calculus}, {\it J. King Saud Univ. Sci,} \textbf{27} (2015), 161--167. doi: 10.1016/j.jksus.2014.09.004.

\bibitem{20} B. Ahmad, R.P. Agarwal,  {Some new versions of fractional boundary value problems with slit$-$strips conditions}, {\it Bound. Value Probl.,} \textbf{2014} (2014), 175. doi: 10.1186/s13661-014-0175-6.

\bibitem{21} C. Derbazi, H. Hammouche,  {Boundary value problems for Caputo fractional differential equations with nonlocal and fractional integral boundary conditions}, {\it Arab. J. Math.,} \textbf{9} (2020), 531--544. doi: 10.1007/s40065-020-00288-9.

\bibitem{22} C. Nuchpong, S. K. Ntouyas, D. Vivek, J. Tariboon,  {Nonlocal boundary value problems for $\psi$-Hilfer fractional-order Langevin equations}, {\it Bound. Value Probl.,} \textbf{2021} (2021), 34. doi: 10.1186/s13661-021-01511-y.

\bibitem{23} M. S. Abdo, T. Abdeljawad, S. M. Ali, K. Shah,  {On fractional boundary value problems involving fractional derivatives with Mittag$-$Leffler kernel and nonlinear integral conditions}, {\it Adv. Diff. Equ.,} \textbf{2021} (2021), 37. doi: 10.1186/s13662-020-03196-6.

\bibitem{24} M. B. Jeelani, A. M. Saeed, M. S. Abdo, K. Shah, A. Vahidi,  {Positive solutions for fractional boundary value problems under a generalized fractional operator}, {\it Math. Meth. Appl. Sci.,} \textbf{44} (2021), 9524--9540. doi: 10.1002/mma.7377.

\bibitem{25} C. S. Goodrich,  {On a fractional boundary value problem with fractional boundary conditions}, {\it Appl. Math. Lett.,} \textbf{25} (2012), 1101--1105. doi: 10.1016/j.aml.2011.11.028.

\bibitem{26} Z. Hu, W. Liu, J. Liu,  {Boundary value problems for fractional differential equations}, {\it Bound. Value Probl.,} \textbf{2014} (2014), 1767. doi: 10.1186/s13661-014-0176-5.

\bibitem{27} Y. Zhu, G. Jia,  {Dynamic programming and Hamilton$-$Jacobi$-$Bellman equations on time scales}, {\it Complexity,} \textbf{2020} (2020), 7683082. doi: 10.1155/2020/7683082.

\bibitem{28} J. Zhou, Y. Li,  {Variational approach to a class of second order Hamiltonian systems on time scales}, {\it Acta Appl. Math.,} \textbf{117} (2012), 47--69. doi: 10.1007/s10440-011-9649-z.

\bibitem{29} Y. H. Su, Z. Feng,  {A non-autonomous Hamiltonian system on time scales}, {\it Nonlinear Anal.,} \textbf{75} (2012), 4126--4136. doi: 10.1016/j.na.2012.03.003.

\bibitem{30} X. Meng, Y. Li,  {Periodic solutions for a class of singular Hamiltonian systems on time scales}, {\it J. Math.,} \textbf{2014} (2014), 573517. doi: 10.1155/2014/573517.

\bibitem{31} A. M. Krall,  {$M(\lambda)$ theory for singular Hamiltonian systems with one singular point}, {\it J. King Saud Univ. Sci,} \textbf{20} (1989), 664--700. doi: 10.1137/0520047.

\bibitem{32} P.H. Rabinowitz,  {\em Minimax methods in critical point theory with applications to differential equations}, Providence: American Mathematical Society, 1986.

\end{thebibliography}
\end{document}